\pgfplotsset{compat=1.15}
\setlist[itemize]{label=\textbullet}
\theoremstyle{remark}
\newtheorem*{rmq}{Remark}
\newtheorem*{ex}{Example}
\newtheorem*{notation}{Notation}
\theoremstyle{plain}
\newtheorem*{rappelprop}{Proposition}
\newtheorem*{rappel}{Theorem}
\theoremstyle{plain}
\newtheorem{theorem}{Theorem}[section]
\newtheorem{prop}[theorem]{Proposition}
\newtheorem{lemma}[theorem]{Lemma}
\newtheorem{cor}[theorem]{Corollary}
\newtheorem{definition}[theorem]{Definition}
\DeclareMathOperator{\Spec}{Spec}
\DeclareMathOperator{\Rspec}{R-Spec}
\DeclareMathOperator{\Rmax}{R-Max}
\DeclareMathOperator{\Max}{Max}
\DeclareMathOperator{\spm}{Spm}
\DeclareMathOperator{\F}{F}
\DeclareMathOperator{\Ftir}{F-}
\DeclareMathOperator{\fracloc}{Frac}
\DeclareMathOperator{\Id}{Id}
\DeclareMathOperator{\disc}{disc}
\DeclareMathOperator{\Rtir}{R-}
\DeclareMathOperator{\Cent}{Cent}
\DeclareMathOperator{\Dom}{Dom}
\DeclareMathOperator{\Indet}{Indet}
\newcommand{\m}{\mathfrak{m}}
\newcommand{\p}{\mathfrak{p}}
\newcommand{\q}{\mathfrak{q}}
\newcommand{\Ocal}{\mathcal{O}}
\newcommand{\Ccal}{\mathcal{C}}
\newcommand{\Zcal}{\mathcal{Z}}
\newcommand{\Vcal}{\mathcal{V}}
\newcommand{\Dcal}{\mathcal{D}}
\newcommand{\Rrm}{\mathrm{Rad}}
\newcommand{\inj}{\hookrightarrow}
\newcommand{\KO}{\mathcal{K}^0}
\newcommand{\RO}{\mathcal{R}^0}
\newcommand{\KR}{\mathcal{K}^{R+}}
\newcommand{\KP}{\mathcal{K}^{+}}
\newcommand{\K}{\mathcal{K}}
\newcommand{\piC}{\pi_{_\mathbb{C}}}
\newcommand{\piR}{\pi_{_\mathbb{R}}}
\newcommand{\C}{\mathbb{C}}
\newcommand{\R}{\mathbb{R}}
\newcommand{\A}{\mathbb{A}}
\newcommand{\RR}{\R[X^{R+}]}
\newcommand{\pC}{p_{_{\C}}}
\newcommand{\qC}{q_{_{\C}}}
\renewcommand\tableofcontents{%
  \null\hfill\textbf{\Large\contentsname}\hfill\null\par
  \@mkboth{\MakeUppercase\contentsname}{\MakeUppercase\contentsname}%
  \@starttoc{toc}%
}
\author{FRANÇOIS BERNARD}
\title{\textbf{A notion of seminormalization for real algebraic varieties}}
\date{}
\begin{document}

\maketitle
\vspace{-0.6cm}
\begin{abstract}\noindent
    The seminormalization of an algebraic variety $X$ is the biggest variety linked to $X$ by a finite, birational and bijective morphism. In this paper we introduce a variant of the seminormalization, suited for real algebraic varieties, called the R-seminormalization. This object have a universal property of the same kind of the one of the seminormalization but related to the real closed points of the variety. In a previous paper, the author studied the seminormalization of complex algebraic varieties using rational functions that extend continuously to the closed points for the Euclidean topology. We adapt here some of those results to the R-seminormalization and we provide several examples. We also show that the R-seminormalization modifies the singularities of a real variety by normalizing the purely complex points and seminormalizing the real ones.

\end{abstract}

\makeatletter
\let\Hy@linktoc\Hy@linktoc@none
\makeatother

{\setlength{\baselineskip}{0.1\baselineskip}
\tableofcontents}

\makeatletter
\def\blfootnote{\gdef\@thefnmark{}\@footnotetext}
\makeatother

\blfootnote{2020 \textit{mathematics subject classification.} 14M05, 14P99, 26C15, 14R99}

\vspace{-0.3cm}
\section*{Introduction.}

The present paper is devoted to the introduction of the R-seminormalization of real algebraic varieties. It can be seen as the real version of the previous paper \cite{Bernard2021} of the author about seminormalization and complex regulous functions. 

The operation of seminormalization was formally introduced around fifty years ago in the case of analytic spaces by Andreotti and Norguet \cite{Andre}. For algebraic varieties, the seminormalization $X^+$ of $X$ is the biggest intermediate variety between $X$ and its normalization which is bijective with $X$. Recently, the concept of seminormalization appears in the study of singularities of algebraic varieties, in particular in the minimal model program of Kollár and Kovács (see \cite{KollarSingularMMP} and \cite{KollarVariantsOfNormality}). The seminormalization has the property to have "multicross" singularities in codimension 1 (see \cite{Multicross}), it means that they are locally analytically isomorphic to the union of linear subspaces of affine space meeting transversally along a common linear subspace.\\
Around 1970 Traverso \cite{T} introduced the notion of the seminormalization $A^+_B$ of a commutative ring $A$ in an integral extension $B$. The idea is to glue together the prime ideals of $B$ lying over the same prime ideal of $A$. The seminormalization $A^+_B$ has the property that it is the biggest extension $C$ of $A$ in $B$ which is subintegral i.e. such that the map $\Spec(C)\to\Spec(A)$ is bijective and equiresidual (it gives isomorphisms between the residue fields). We refer to Vitulli \cite{VitulliSurvey2011} for a survey on seminormality for commutative rings and algebraic varieties.\\
In real algebraic geometry, the seminormalization is first studied in 1975 \cite{ABT1975} in the case of real analytic sets. However, in 1981, Marinari and Raimondo \cite{MarinariRaimondo81} show that the construction of Traverso with real spectrum has no natural universal property. Recently Fichou, Monnier and Quarez defined in \cite{FMQ} a real seminormalization called the "central" seminormalization but whose universal property does not rely on all the real closed points of the variety. The R-seminormalization is equipped with such a universal property and this is what motivated the introduction of this object.

The idea of the construction of the R-seminormalization is the following : since the seminormalization of a variety is obtained by gluing together the complex points that have been separated in the normalization, one may want to do the same thing but with real points of real varieties. Unfortunately, gluing the real points of the normalization in the fibers of the real points of a variety can lead to some problems because the restriction to the real closed points of a finite morphism of real varieties is not necessarily surjective. For example, the normalization of the real variety $X := \Spec(\R[x,y]/<y^2-x^2(x-1)>)$ is $X':=\Spec(\R[x,y]/<y^2-x+1>)$ and we have $(\pi')^{-1}(\{0\}) = \varnothing$ where $\pi'$ is the normalization morphism. In fact, for a general variety $X$, one can not find a variety $Y$ that would be maximal for the property of having a morphism $\pi : Y \to X$ which is finite, birational and such that $\pi_{\R}$ is bijective. A proof that such a variety does not exists in general can be found in \cite{FMQ} Example 5.6. This lack of surjectivity led the authors of \cite{FMQ} to consider the "central" seminormalization where they glue the real central points (i.e. the points in the Euclidean closure of the regular locus) of the normalization above the real central points of the variety. An other way to counter this lack of surjectivity is to consider all the complex points in the normalization lying over the real points of the variety. This is the idea of the R-seminormalization.

Recently, the present author highlighted in \cite{Bernard2021} the correlation between seminormalization and continuous rational functions for the Euclidean topology on complex affine varieties and on any algebraic variety over a field of characteristic 0 together with Fichou, Monnier and Quarez in \cite{BFMQ}. The spirit of the work presented here is to get an analog of the results of \cite{Bernard2021} for the R-seminormalization while getting a better understanding of this new object.

This paper is organized as follows. In Section \ref{SectionChap3:GeneralizationPU}, we give a generalization of Traverso's construction of the seminormalization. This allows us to define and to provide a universal property of the R-seminormalization. This universal property states that the R-seminormalization of a ring $A$ in an integral extension of $A$ is the biggest subextension such that there exists a unique prime ideal above each real prime ideal of $A$ and their residue fields are isomorphic. In Section \ref{SectionChap3:TheRSNGeometric}, we look at the R-seminormalization of real algebraic varieties and we prove a geometrical universal property for this object. We show that the R-seminormalization is the biggest variety between a variety $X$ and its normalization such that there exists a unique complex closed point above each element of $X(\R)$.
In Section \ref{SectionChap3:FunctionsOnTheRSN}, we identify by several ways, as made in the paper \cite{Bernard2021}, the ring of functions on $X(\R)$ which becomes polynomials on the closed points of the R-seminormalization. On the normalization, those functions correspond to the polynomials which are constant on the complex fibers over the real closed points of $X$. They also correspond to the integral rational functions which are (almost) continuous on $X(\R)$ for the topology of $X(\C)$. More precisely, we get the following version of Theorem 4.13 of \cite{Bernard2021} for the R-seminormalization.

\begin{rappelprop}[\ref{PropCritereSequentiel}]
Let $X$ be a real affine variety and let $f : X(\R) \to \R$. Then $f$ becomes polynomial on the R-seminormalization if and only if it verifies the following properties :
\begin{enumerate}
    \item The function $f$ is rational.
    \item The function $f$ is integral over $\R[X]$.
    \item For all $x\in X(\R)$, for all $(z_n)_n\in \Dom(f)\cup X(\R)$ such that $z_n \to x$ then $f(z_n) \to f(x)$.
\end{enumerate}
\end{rappelprop}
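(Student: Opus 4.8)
The plan is to reduce the statement to the geometric description of the R-seminormalization established in Section~\ref{SectionChap3:TheRSNGeometric}. Write $\pi\colon X'\to X$ for the normalization morphism and $\pi_{\C}\colon X'(\C)\to X(\C)$ for the induced map on complex points, which is finite, surjective and birational; let $U\subseteq X$ be the dense open locus over which $\pi$ is an isomorphism. First I would observe that conditions (1) and (2) together say exactly that $f$, viewed as an element of the total ring of fractions of $\R[X]$, is integral over $\R[X]$, i.e. $f\in\R[X']$; thus $f$ is the restriction to $X(\R)$ of a genuine polynomial function $\tilde f$ on $X'$. By the characterization recalled in the introduction, $f$ becomes polynomial on the R-seminormalization if and only if $\tilde f$ is constant on each complex fibre $\pi_{\C}^{-1}(x)$, $x\in X(\R)$. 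Hence the whole content of the proposition is the equivalence between this fibre-constancy and the sequential condition (3), once $f$ is known to lie in $\R[X']$.

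For the direct implication, assume $f=\tilde f|_{X(\R)}$ with $\tilde f$ constant on the complex fibres over $X(\R)$; then (1) and (2) are immediate. To get (3), fix $x\in X(\R)$ and a sequence $z_n\to x$ with $z_n\in\Dom(f)\cup X(\R)$. I would lift each $z_n$ to a point $\tilde z_n\in\pi_{\C}^{-1}(z_n)$ and check, case by case, that $\tilde f(\tilde z_n)=f(z_n)$: on $\Dom(f)$ because $\tilde f=\pi_{\C}^{*}f$ there, and on $X(\R)$ because $\tilde f$ is constant on the fibre with exactly the value $f$ takes at the point. Since $\pi$ is finite, $\pi_{\C}$ is proper for the Euclidean topology, so the $\tilde z_n$ remain in a compact set and every subsequence has a further subsequence converging to some $\tilde x\in\pi_{\C}^{-1}(x)$; along it $f(z_n)=\tilde f(\tilde z_n)\to\tilde f(\tilde x)=f(x)$, the last equality by fibre-constancy. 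As the limit $f(x)$ does not depend on the subsequence and the values stay bounded, the whole sequence converges, giving (3).

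Conversely, assume (1)--(3); as above $f\in\R[X']$ yields a polynomial $\tilde f$ on $X'$, and it remains to prove that $\tilde f$ is constant on $\pi_{\C}^{-1}(x)$ for every $x\in X(\R)$. Fix such an $x$ and an arbitrary $\tilde x\in\pi_{\C}^{-1}(x)$. Since $\pi_{\C}^{-1}(U)$ is a nonempty Zariski-open subset of the irreducible components of the complex variety $X'(\C)$, it is dense for the Euclidean topology, so I can choose complex points $\tilde y_n\in\pi_{\C}^{-1}(U)$ with $\tilde y_n\to\tilde x$. Setting $y_n:=\pi_{\C}(\tilde y_n)\in U(\C)\subseteq\Dom(f)$ we have $y_n\to x$ by continuity of $\pi_{\C}$, and on the isomorphism locus $f(y_n)=\tilde f(\tilde y_n)$. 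Condition (3) yields $f(y_n)\to f(x)$, while continuity of the polynomial $\tilde f$ yields $\tilde f(\tilde y_n)\to\tilde f(\tilde x)$; hence $\tilde f(\tilde x)=f(x)$. As $\tilde x$ was arbitrary in the fibre, $\tilde f$ is the constant $f(x)$ on $\pi_{\C}^{-1}(x)$, and the geometric characterization gives that $f$ becomes polynomial on the R-seminormalization.

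The main obstacle I expect is the converse, and precisely the step of reaching the purely complex points of a fibre over a real point: one must approximate an arbitrary $\tilde x\in\pi_{\C}^{-1}(x)$ by points of the isomorphism locus in the Euclidean topology in order to feed genuine elements of $\Dom(f)$ into hypothesis (3). This rests on the Euclidean density of a Zariski-dense open set in the complex points of each component of $X'$, together with the finiteness (hence properness) of $\pi$; the same properness underlies the compactness and subsequence argument of the direct implication. The remaining care is purely bookkeeping: verifying $\tilde f=\pi_{\C}^{*}f$ on $\Dom(f)$ and matching the value of $\tilde f$ on a fibre with the prescribed real value $f(x)$, so that the constant produced is indeed $f(x)$ and $\tilde f|_{X(\R)}=f$.
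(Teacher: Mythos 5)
Your proposal is correct and follows essentially the same route as the paper: both reduce the statement, via the fibre-constancy characterization of $\R[X^{R+}]$ inside $\R[X']$ (Proposition \ref{PropConstanteSurLesFibres}) and the integrality lemma, to the equivalence between constancy of $\tilde f$ on complex fibres over $X(\R)$ and the sequential condition, proving the direct implication by properness of $\piC'$ plus a compactness/subsequence extraction, and the converse by Euclidean density of a Zariski-dense open set in $X'(\C)$ to approximate an arbitrary fibre point. The only cosmetic difference is that you approximate through the isomorphism locus of $\pi$ while the paper uses the complement of a denominator $\Dcal(q\circ\piC')$; both serve the same purpose.
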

We give a third characterization of those functions using their graphs in the same spirit as Theorem 4.20 of \cite{Bernard2021}. In Section \ref{SectionChap3:RealKOcomplex}, we look at the functions on $X(\R)$ which are the restriction of continuous rational functions on $X(\C)$. This ring of functions correspond to the coordinate ring of the seminormalization $X^+$. Then we present several examples in order to compare the seminormalization, the R-seminormalization, the central seminormalization and the central weak-normalization.
Finally, in Section \ref{SectionLinkR-SNandBirNorm}, we prove that the R-seminormalization of a real variety $X$ is related to its seminormalization $X^+$ and to its biregular normalization $X^b$ which has been introduced by Fichou, Monnier and Quarez in \cite{IntClosure}. Briefly, the biregular normalization is the biggest variety which is linked to $X$ by a birational, finite and biregular morphism.
\begin{rappel}[\ref{TheoB+est+B}]
Let $X$ be a real affine variety. Then
$$ \R[X^{R+}] \simeq \R[X^+]^b \simeq \R[X^b]^+$$
\end{rappel}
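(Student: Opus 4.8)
The plan is to realize all the rings in question as intermediate extensions of $\R[X]$ inside the normalization $\R[\tilde X]$ and to play the three universal properties against one another. The first observation is that the normalization is insensitive to each operation: since $X^+$, $X^b$ and $X^{R+}$ are all finite and birational over $X$, one has $\R[\widetilde{X^+}]=\R[\widetilde{X^b}]=\R[\tilde X]$, so every iterated construction $(X^+)^b$, $(X^b)^+$ lives inside the single ring $\R[\tilde X]$ and it suffices to compare subrings of $\R[\tilde X]$ containing $\R[X]$, the stated isomorphisms being realized as equalities of such subrings. I will freely use the universal characterizations already established: $\R[X^+]$ is the largest such subring for which $\Spec\to\Spec\R[X]$ is bijective and equiresidual (subintegral), $\R[X^{R+}]$ is the largest one admitting a unique complex closed point over each point of $X(\R)$, and $\R[X^b]$ is the largest one for which the induced morphism is biregular in the sense of \cite{IntClosure}.

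The conceptual heart of the statement is that these operations act on complementary parts of the spectrum. Over a point of $X(\R)$ the subintegral and the R-seminormal conditions coincide and both force a single real point, whereas over the non-real closed points the subintegral condition glues the conjugate pairs while the R-seminormal condition imposes nothing and hence keeps them separated as in the normalization; the biregular condition is exactly the intermediate one bridging the two. I would first record this as a local lemma: for a finite birational $\pi\colon Y\to X$, being R-seminormal is equivalent to being subintegral over $X(\R)$ together with being the full normalization over the non-real locus, and, dually, being subintegral amounts to the R-seminormal condition over $X(\R)$ refined by equiresiduality on the non-real locus. With this dictionary the two equalities become formal, and one checks immediately the implications \emph{subintegral}$\Rightarrow$\emph{R-seminormal} and \emph{biregular}$\Rightarrow$\emph{R-seminormal}, which by maximality give $\R[X^+]\subseteq\R[X^{R+}]$ and $\R[X^b]\subseteq\R[X^{R+}]$.

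For $\R[X^{R+}]\simeq\R[X^+]^b$ I argue by double maximality. For $\R[X^+]^b\subseteq\R[X^{R+}]$, note that $\Spec\R[X^+]^b\to\Spec\R[X^+]$ is biregular and $\Spec\R[X^+]\to\Spec\R[X]$ is subintegral; over a point of $X(\R)$ the second map yields a unique real point and the first a unique complex point above it, so the composite has a unique complex point over each real point, and maximality of $\R[X^{R+}]$ gives the inclusion. For the reverse inclusion it suffices, by maximality of $\R[X^+]^b$, to know that $\R[X^+]\subseteq\R[X^{R+}]$ and that $\Spec\R[X^{R+}]\to\Spec\R[X^+]$ is biregular. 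The equality $\R[X^{R+}]\simeq\R[X^b]^+$ is proved symmetrically: one inclusion follows by composing the subintegral map $\Spec\R[X^b]^+\to\Spec\R[X^b]$ with the biregular map $\Spec\R[X^b]\to\Spec\R[X]$ and reading off the unique complex point over each real point, and the reverse inclusion reduces, by maximality of $\R[X^b]^+$, to checking that $\R[X^b]\subseteq\R[X^{R+}]$ and that $\Spec\R[X^{R+}]\to\Spec\R[X^b]$ is subintegral.

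The two steps I expect to be the main obstacle are precisely the leftover verifications: that the enlargement $\R[X^+]\hookrightarrow\R[X^{R+}]$ is biregular and that $\R[X^b]\hookrightarrow\R[X^{R+}]$ is subintegral. By the local lemma, passing from $X^+$ to $X^{R+}$ modifies only the non-real locus — it replaces the glued conjugate pairs of the seminormalization by the separated points of the normalization — and the difficulty is to show that such a purely non-real modification is biregular, which is exactly where the definition of a biregular morphism from \cite{IntClosure} must be used. Dually, passing from $X^b$ to $X^{R+}$ affects only the real locus, where it performs the gluing characteristic of the seminormalization, and one must check this gluing is bijective and equiresidual; here I would lean on the sequential description of $\R[X^{R+}]$ from Proposition \ref{PropCritereSequentiel}, which makes the behaviour over $X(\R)$ explicit and identifies the extension as subintegral.
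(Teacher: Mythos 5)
Your skeleton matches the paper's: establish the easy inclusions $\R[X^+],\R[X^b]\subseteq\R[X^{R+}]$ by the universal properties, then reduce each equality to one remaining verification, namely that $\R[X^+]\inj\R[X^{R+}]$ is biregular and that $\R[X^b]\inj\R[X^{R+}]$ is subintegral. The problem is that these two verifications are the entire substance of the theorem, and you leave both of them as acknowledged ``obstacles'' with only a gesture at how they might be resolved. Worse, the tool you propose to lean on --- the ``local lemma'' asserting that $X^{R+}$ is the seminormalization over the real points and the normalization over the non-real points --- is, in the paper, Lemma \ref{LemR+loc}, whose proof \emph{uses} the identity $(A^b)^+=A^{R+}$ (Proposition \ref{PropAb+EstAR+}) together with the local description of $A^b$ from \cite{IntClosure}. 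Taking it as an input to prove the theorem is therefore circular unless you supply an independent proof, and such a proof is essentially as hard as the theorem itself.

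Concretely, the paper closes the two gaps as follows. For $(A^b)^+=A^{R+}$ it shows directly that $A^b\inj (A^b)^{R+}=A^{R+}$ is $\Max$-subintegral, splitting into cases: over a real maximal ideal $\m$ one uses that $A\inj A^b$ is biregular to get a unique real $\m^b$ above $\m$ and then R-subintegrality of $A^b\inj(A^b)^{R+}$; over a non-real $\m$ one invokes \cite{IntClosure} Proposition 4.6 to get $(A^b)_\m=A'_\m$, hence $(A^b)_\m=\bigl((A^b)^{R+}\bigr)_\m$, and a short commutative-algebra argument showing that equal localizations force the maximal ideals above $\m^b$ to coincide. Only then does Lemma \ref{LemR+loc} follow, and the second identity $(A^+)^b=A^{R+}$ is obtained by comparing the resulting intersection formula $A^{R+}=\bigcap_{\m\in\Rmax(A)}A^+_\m\cap\bigcap_{\m\notin\Rmax(A)}A'_\m$ with the analogous formula for $(A^+)^b$ from \cite{IntClosure} Proposition 4.8. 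Your alternative suggestion of deducing subintegrality of $\R[X^b]\inj\R[X^{R+}]$ from Proposition \ref{PropCritereSequentiel} does not obviously work either: that proposition characterizes which functions on $X(\R)$ lie in $\KR(X(\R))$, whereas what you need is control of the fibers of $\Spec\R[X^{R+}]\to\Spec\R[X^b]$ over \emph{all} (including non-real) primes, which is precisely where the localization arguments above are needed.
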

This result allows us to see how the R-seminormalization modifies the singularities of a real algebraic variety : it normalizes the purely complex points and seminormalizes the real points.\\

\textbf{Acknowledgement :} This paper is part of Ph.D. Thesis of the author which was partly funded by the Centre Henri Lebesgue. The author is deeply grateful to G.Fichou and J-.P. Monnier for their precious help.

\section{Generalization of the seminormalization for general rings}\label{SectionChap3:GeneralizationPU}

In this section, we present a generalization of Traverso's construction of the seminormalization. The proofs of the results presented here being very similar to the one of Section 2 in \cite{Bernard2021}, we will refer to them a lot. This generalization will allow us to provide a universal property of the R-seminormalization but also of the R-Max-seminormalization which will be convenient in Section \ref{SectionChap3:TheRSNGeometric}.

Let $A$ be a ring. We say that an ideal $I$ of $A$ is real if it has the following property : If a sum of square elements $\sum a_i^2 \in \sum A^2$ belongs to $I$, then every element $a_i$ belongs to $I$. We denote by $\Rspec(A)$ (resp. $\Rmax(A)$) the set of real prime (resp. maximal) ideals of $A$. Let $\F$ be one of the functors $\Spec, \Max, \Rmax$ or $\Rspec$. In particular, $\F : \textbf{Ann} \to \textbf{Top}$ is a contravariant subfunctor of $\Spec$. It means that for all $A \in \textbf{Ann}$ we have $\F(A) \subset \Spec(A)$ and for all extension of rings $A\inj B$, we get a continuous application $\F(B) \to \F(A)$ given by $\p \to \p \cap A$. One may want to take a general subfunctor of $\Spec$ but we will need, at some point in this section, the specificity of the four considered subfunctors. See the remark after Definition \ref{DefFSubintegral} for more details.

\begin{definition}
Let $A \inj B$ be an integral extension of rings. We define
$$ A_B^{\F} := \{ b\in B \mid \forall \p\in \F(A) \text{,\hspace{0.2cm}} b_\p \in A_{\p}+\Rrm(B_{\p}) \}$$
\end{definition}

\begin{rmq}
If $\F  =\Spec$, the ring $A_B^{\F}$ is the seminormalization $A^+_B$ of $A$ in $B$. 
\end{rmq}

\begin{definition}\label{DefFSubintegral}
Let $A\inj B$ be an integral extension of rings. The extension $A \inj B$ is called $\F$-\textit{subintegral} if it verifies the following conditions :
\begin{enumerate}
    \item For all $\p\in \F(A)$, there exists a unique $\q \in \Spec(B)$ such that $\q\cap A = \p$.
    \item For such $\p$ and $\q$, we have $\q\in\F(B)$ and the induced map $\kappa(\p) \inj \kappa(\q)$ on the residue fields, is an isomorphism.
\end{enumerate}
\end{definition}

\begin{rmq}
It is important to see that, if $\p\in \F(A)$ and $\q \in \Spec(B)$ are such that $\q\cap A = \p$. The condition $\kappa(\p)\simeq\kappa(\q)$ implies that $\q\in\F(B)$. It is really specific to the fact that an ideal is real if and only if its residual field is a real field.
\end{rmq}

We give here a first geometric property of $\F$-subintegral extensions.

\begin{prop}\label{LemFiniBijDoncHomeoSurSpec}
Let $A \inj B$ be an $\F$-subintegral extension of rings and $\pi : \F(B) \to \F(A)$ be the induced map. Then $\pi$ is a Z-homeomorphism for the induced topology.
\end{prop}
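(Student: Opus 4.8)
The plan is to show that the map $\pi : \F(B) \to \F(A)$ is a continuous bijection with a continuous inverse, i.e. a homeomorphism for the Zariski (Z) topology on both sides. The continuity of $\pi$ is already built into the setup: it is the restriction of the functorial map $\Spec(B) \to \Spec(A)$, $\q \mapsto \q \cap A$, which is always Z-continuous for an extension of rings, and which by the second condition of Definition \ref{DefFSubintegral} does land inside $\F(A)$ when restricted to $\F(B)$. So the content is really about bijectivity and the continuity of the inverse.

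First I would establish that $\pi$ is a bijection. Surjectivity and injectivity of $\pi$ follow almost verbatim from the definition of $\F$-subintegrality: condition (1) says that over each $\p \in \F(A)$ there is a \emph{unique} $\q \in \Spec(B)$ with $\q \cap A = \p$, and condition (2) guarantees that this unique $\q$ actually lies in $\F(B)$ (using the Remark that $\kappa(\p) \simeq \kappa(\q)$ forces $\q$ to be real when $\p$ is). Thus the assignment $\p \mapsto \q$ is a well-defined two-sided inverse to $\pi$ at the level of sets, giving bijectivity.

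The main work, and the step I expect to be the genuine obstacle, is showing that the inverse $\pi^{-1}$ is continuous — equivalently, that $\pi$ is a \emph{closed} (or open) map. I would proceed by checking that $\pi$ sends Z-closed sets to Z-closed sets. A Z-closed subset of $\F(B)$ has the form $V(J) \cap \F(B)$ for an ideal $J$ of $B$; I would want to show its image is $V(J \cap A) \cap \F(A)$, or more precisely the closed set cut out by the contraction. The inclusion $\pi(V(J) \cap \F(B)) \subseteq V(J \cap A) \cap \F(A)$ is formal. For the reverse inclusion I would use the integrality of the extension $A \inj B$: by the lying-over and going-up properties of integral extensions, any prime of $A$ containing $J \cap A$ is the contraction of a prime of $B$ containing $J$, and the uniqueness clause of $\F$-subintegrality pins this prime down inside $\F(B)$. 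This is exactly where the hypothesis that $A \inj B$ is integral, together with conditions (1)--(2), is indispensable.

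Since the results of this section are stated to closely parallel Section 2 of \cite{Bernard2021}, I expect the cleanest write-up to package the bijectivity from Definition \ref{DefFSubintegral} and then invoke the standard fact that a continuous bijection which is also closed is a homeomorphism, reducing everything to the closedness step above. The subtle point to handle carefully is that we are working with the \emph{induced} topology on $\F(A)$ and $\F(B)$ rather than on all of $\Spec$: one must check that the closed sets of the subspace are genuinely of the form $V(\cdot) \cap \F$, so that the integral-extension argument applies within the subfunctor, which is why the Remark stressing that the four chosen functors behave well (reality detected by the residue field) is precisely the feature being used.
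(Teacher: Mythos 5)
Your proposal is correct and follows essentially the same route as the paper: continuity is inherited from $\Spec(B)\to\Spec(A)$, and closedness is obtained from lying-over/going-up for the integral extension combined with the uniqueness clause of $\F$-subintegrality (the residue-field remark guaranteeing the unique prime lands in $\F(B)$). If anything, your version is slightly more careful, since you treat a general closed set $\mathcal{V}(J)\cap\F(B)$, whereas the paper only computes the image of sets of the form $\mathcal{V}(\q)$ for $\q\in\F(B)$.
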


\begin{proof}
The morphism $\Spec(B) \to \Spec(A)$ is Z-continuous, so is its restriction to $\F(B)$, thus we just have to show that $\pi$ is Z-closed. Let $\p\in \F(A)$ and $\q\in\F(B)$ such that $\q\cap A = \p$. We have
$$\pi(\mathcal{V}(\q)) = \{ \q'\cap A\mid \q\subset \q' \in \F(B) \}$$
so $\pi(\mathcal{V}(\q)) \subset \mathcal{V}(\q\cap A) = \mathcal{V}(\p)$. If $\p'\in \mathcal{V}(\p) \cap \F(A)$, then the going-up property says that there exists $\q'\in \Spec(B)$ such that $\q' \cap A = \p'$ and $\q\subset \q'$. Since the extension is $\F$-subintegral, the ideal $\q'$ belongs to $\F(B)$. Hence $\p'\in \pi(\mathcal{V}(\q))$ and finally $\mathcal{V}(\p) \subset \pi(\mathcal{V}(\q))$, so $\pi(\mathcal{V}(\q)) = \mathcal{V}(\p)$ which is Z-closed.
\end{proof}

The goal of this section is to prove the following universal property. It says that the $\F$-seminormalization of $A$ in $B$ is the biggest $\F$-subintegral extension of $A\inj B$.
\begin{prop}\label{PropPU}
Let $A\inj C\inj B$ be integral extensions of rings. Then the following statements are equivalent :
\begin{enumerate}
    \item[1)] The extension $A \inj C$ is $\F$-subintegral.
    \item[2)] The image of $C\inj B$ is a subring of $A_B^{\F}$.
\end{enumerate}
\end{prop}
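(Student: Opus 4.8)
The plan is to reduce both implications to statements about the localizations at a prime $\p\in\F(A)$, exactly in the spirit of the treatment of the classical seminormalization in Section 2 of \cite{Bernard2021}. Throughout I will use the standard facts on integral extensions: for the multiplicative set $A\setminus\p$, the primes of $C$ (resp. $B$) lying over $\p$ are precisely the maximal ideals of $C_\p$ (resp. $B_\p$); lying-over and incomparability hold; and $\Rrm(B_\p)$, being the Jacobson radical, is contained in every maximal ideal of $B_\p$ and contains no unit. The three localizations $A_\p\inj C_\p\inj B_\p$ and the natural map $C_\p\to B_\p$ will be the main bookkeeping device.

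For $1)\Rightarrow 2)$, I would assume $A\inj C$ is $\F$-subintegral and fix $c\in C$ and $\p\in\F(A)$. The uniqueness clause produces a single prime $\q$ of $C$ over $\p$, so $C_\p$ is local with maximal ideal $\m_C=\q C_\p$, and the residue isomorphism $\kappa(\p)\xrightarrow{\sim}\kappa(\q)$ allows me to write $c_\p=\alpha+m$ with $\alpha\in A_\p$ and $m\in\m_C$. It then suffices to check $\m_C\subseteq\Rrm(B_\p)$: every maximal ideal $\M$ of $B_\p$ contracts, by integrality, to the unique maximal ideal $\m_C$ of $C_\p$, hence $\m_C\subseteq\M$ for all such $\M$, whence $\m_C\subseteq\Rrm(B_\p)$. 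Transporting the decomposition along $C_\p\to B_\p$ yields $c_\p\in A_\p+\Rrm(B_\p)$, i.e. $c\in A_B^{\F}$.

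For $2)\Rightarrow 1)$, I would assume $C\subseteq A_B^{\F}$ and fix $\p\in\F(A)$. For uniqueness of the prime over $\p$ I argue by contradiction: if $\q_1\neq\q_2$ both lie over $\p$ they are incomparable, so I may pick $c\in\q_1\setminus\q_2$; writing $c_\p=\alpha+r$ with $\alpha\in A_\p$ and $r\in\Rrm(B_\p)$, and choosing maximal ideals $\M_1,\M_2$ of $B_\p$ above $\q_1,\q_2$, the membership $c\in\q_1$ forces $\alpha\in\M_1\cap A_\p=\p A_\p\subseteq\M_2$, hence $c_\p=\alpha+r\in\M_2$ and finally $c\in\q_2$, a contradiction. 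Thus $C_\p$ is local with maximal ideal $\m_C=\q C_\p$. For the residue isomorphism, given $c\in C$ I write again $c_\p=\alpha+r$; since $r=c_\p-\alpha\in C_\p\cap\Rrm(B_\p)$ and $\Rrm(B_\p)$ contains no unit, $r$ cannot be a unit of the local ring $C_\p$, so $r\in\m_C$ and $c_\p\equiv\alpha\pmod{\m_C}$. Hence $A_\p\to\kappa(\q)$ is surjective and $\kappa(\p)\to\kappa(\q)$ is an isomorphism. It remains to see $\q\in\F(C)$: for $\F=\Max$ or $\Rmax$ the maximality of $\q$ follows from that of $\p$ by integrality, and for $\F=\Rspec$ or $\Rmax$ the reality of $\q$ follows from the residue isomorphism via the Remark after Definition \ref{DefFSubintegral} (an ideal is real if and only if its residue field is).

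I expect the principal delicate point to be not any single computation but keeping the four functors $\Spec,\Max,\Rmax,\Rspec$ under uniform control. The localization arguments are identical for all four, but the closing step $\q\in\F(C)$ is exactly where the special nature of these functors is used, as flagged in the paragraph preceding Definition \ref{DefFSubintegral}: stability of maximality under integral extensions handles $\Max$ and $\Rmax$, while the characterization of real ideals through their residue fields handles $\Rspec$ and $\Rmax$. A secondary source of care is the passage of the decomposition $c=\alpha+m$ between $A_\p$, $C_\p$ and $B_\p$, where one must track units and contractions precisely so that membership in $\m_C$ and in $\Rrm(B_\p)$ are correctly matched.
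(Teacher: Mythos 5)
Your proof is correct, but the second half takes a genuinely different route from the paper. For $1)\Rightarrow 2)$ your argument (localize at $\p\in\F(A)$, use uniqueness of the prime over $\p$ to make $C_\p$ local, use the residue isomorphism to write $c_\p=\alpha+m$ with $m\in\m_C$, and observe that $\m_C$ lands in $\Rrm(B_\p)$ because every maximal ideal of $B_\p$ contracts to $\m_C$) is essentially the argument the paper outsources to Proposition 2.4 of \cite{Bernard2021}, so there you are simply inlining the cited proof. For $2)\Rightarrow 1)$, however, the paper does not argue element-wise at all: it inserts $C$ into the chain $A\inj C\inj A_B^{\F}\inj B$, invokes Proposition \ref{PropSnSub} to get that $A\inj A_B^{\F}$ is $\F$-subintegral, and then applies the descent half of the transitivity Lemma \ref{LemTransSub} to conclude that $A\inj C$ is $\F$-subintegral. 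Your direct argument --- incomparability plus the decomposition $c_\p=\alpha+r$ to force uniqueness of the prime over $\p$, then the non-unit observation $r\in C_\p\cap\Rrm(B_\p)\subseteq\m_C$ to get equiresiduality, then the residue-field characterization of real ideals to land in $\F(C)$ --- is sound; it essentially reproves the relevant part of Proposition \ref{PropSnSub} with $C$ in place of $A_B^{\F}$. What the paper's route buys is economy (the hard local analysis is done once, in Proposition \ref{PropSnSub}, and everything else follows formally from transitivity); what your route buys is self-containedness and a proof that does not depend on first establishing that $A\inj A_B^{\F}$ is itself $\F$-subintegral. Your closing discussion of where the specific nature of the four functors enters (maximality via integrality for $\Max$ and $\Rmax$, reality via residue fields for $\Rspec$ and $\Rmax$) matches the paper's remark after Definition \ref{DefFSubintegral} and is exactly the right point to flag.
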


We prove the universal property through a series of propositions. But first, let us recall the important property of "going-up" verified for integral extensions.
\begin{prop}[\cite{AM}]\label{LemGoingUp}
Let $A\inj B$ be an integral extension of rings. Then
\begin{enumerate}
    \item The associated map $\Spec(B) \to \Spec(A)$ is surjective.
    \item By the map $\Spec(B) \to \Spec(A)$, the inverse image of $\Max(A)$ is $\Max(B)$.
\end{enumerate}
\end{prop}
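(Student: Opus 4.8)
The plan is to derive both assertions from a single purely algebraic lemma about integral extensions of \emph{domains}: if $A \inj B$ is an integral extension of integral domains, then $B$ is a field if and only if $A$ is a field. Once this lemma is in hand, part 2 follows by passing to quotients and part 1 follows by localizing, so the only genuine content lives in the lemma.

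First I would prove the lemma. Suppose $A$ is a field and let $b \in B$ be nonzero; choose an integral dependence relation $b^n + a_{n-1}b^{n-1} + \dots + a_0 = 0$ with $a_i \in A$ of minimal degree $n$. Minimality together with the fact that $B$ is a domain forces $a_0 \neq 0$ (otherwise one could factor out $b$ and lower the degree), and since $a_0$ is invertible in the field $A$ one solves $b^{-1} = -a_0^{-1}(b^{n-1} + a_{n-1}b^{n-2} + \dots + a_1) \in B$. Conversely, if $B$ is a field and $a \in A$ is nonzero, then $a^{-1} \in B$ is integral over $A$, and clearing denominators in its integral relation expresses $a^{-1}$ as a polynomial in $a$ with coefficients in $A$, so $a^{-1} \in A$. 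The one delicate point is justifying $a_0 \neq 0$, which is exactly where the domain hypothesis enters.

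Next I would extract from this the maximality correspondence, which is precisely part 2. Given $\q \in \Spec(B)$ with contraction $\p = \q \cap A$, the induced map $A/\p \inj B/\q$ is again an integral extension, now between integral domains. Applying the lemma, $B/\q$ is a field if and only if $A/\p$ is a field, that is, $\q$ is maximal if and only if $\p$ is maximal. Reading this across both directions gives that the inverse image of $\Max(A)$ under $\Spec(B) \to \Spec(A)$ is exactly $\Max(B)$.

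Finally, for the surjectivity in part 1, I would localize at $\p$. Writing $S = A \setminus \p$, the localized map $A_\p \inj B_\p$ is still injective (localization is exact) and still integral (divide an integral relation for $b$ by a power of $s$ to get one for $b/s$), and $B_\p \neq 0$ since it contains the nonzero ring $A_\p$. Choosing any maximal ideal $\n$ of $B_\p$, the correspondence just proved shows $\n \cap A_\p$ is maximal in the local ring $A_\p$, hence equals $\p A_\p$; pulling $\n$ back along $B \to B_\p$ produces a prime $\q$ of $B$ lying over $\p$. The hard part here is purely bookkeeping: I must track contractions carefully through the localization to confirm $\q \cap A = \p$ rather than merely $\q \cap A \subseteq \p$.
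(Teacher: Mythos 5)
Your proof is correct. The paper offers no proof of this proposition at all—it is recalled from the cited reference \cite{AM}—and your argument (the field criterion for integral extensions of domains, quotients to get the maximality correspondence, localization at $\p$ to get lying-over) is precisely the standard proof found in that reference, with all the delicate points (minimality forcing $a_0 \neq 0$, and tracking contractions through $B \to B_\p$) handled correctly.
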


\begin{prop}\label{PropSnSub}
Let $A \inj B$ be an integral extension of rings. Then
$$A \inj A_B^{\F}\text{ is }\F\text{-subintegral}$$
\end{prop}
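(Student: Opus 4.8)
The plan is to follow Traverso's strategy, carried out for $\F=\Spec$ in Section 2 of \cite{Bernard2021}, and to reduce both conditions of Definition \ref{DefFSubintegral} to a local computation at each $\p\in\F(A)$. Recall first that $A_B^{\F}=:C$ is a subring of $B$ containing $A$ (established exactly as in \cite{Bernard2021}), so $A\inj C$ is an integral extension. Fix $\p\in\F(A)$ and localize at $S=A\setminus\p$. Since localization is exact, $C_\p$ injects into $B_\p$; and the defining condition gives, for every $b\in C$, that $b/1\in A_\p+\Rrm(B_\p)$, whence, $A_\p+\Rrm(B_\p)$ being an $A_\p$-submodule of $B_\p$, we obtain the sandwich $A_\p\subseteq C_\p\subseteq A_\p+\Rrm(B_\p)=:D_\p$.

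Next I would analyse the local model $D_\p$. Here $\Rrm(B_\p)$ is the Jacobson radical of $B_\p$, i.e. the intersection of its maximal ideals; since $A_\p$ is local and $A_\p\inj B_\p$ is integral, the second part of Proposition \ref{LemGoingUp} forces every maximal ideal of $B_\p$ to lie over $\p A_\p$, so that $A_\p\cap\Rrm(B_\p)=\p A_\p$. As $\Rrm(B_\p)$ is an ideal of $B_\p$ contained in $D_\p$, it is also an ideal of $D_\p$, and the isomorphism $D_\p/\Rrm(B_\p)\simeq A_\p/\p A_\p=\kappa(\p)$ exhibits $\Rrm(B_\p)$ as a maximal ideal of $D_\p$ with residue field $\kappa(\p)$. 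To see it is the only prime of $D_\p$ over $\p A_\p$, I would take any such prime $\n$, lift it by going-up to a maximal prime $\q$ of $B_\p$, and note that $\q\supseteq\Rrm(B_\p)$ forces $\n=\q\cap D_\p\supseteq\Rrm(B_\p)$, hence $\n=\Rrm(B_\p)$ by maximality.

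The intermediate ring $C_\p$ then inherits these properties. Existence of a prime over $\p A_\p$ comes from the surjectivity of $\Spec(C_\p)\to\Spec(A_\p)$, and uniqueness follows because any two such primes lift, via going-up into $D_\p$, to the single prime $\Rrm(B_\p)$; thus $\m_C:=\Rrm(B_\p)\cap C_\p$ is the unique prime of $C_\p$ above $\p A_\p$. The chain of residue-field maps $\kappa(\p)\inj\kappa(\m_C)\inj D_\p/\Rrm(B_\p)\simeq\kappa(\p)$ composes to an isomorphism, so each injection is an isomorphism and $\kappa(\m_C)\simeq\kappa(\p)$. Undoing the localization, $\m_C$ corresponds to a unique prime $\q$ of $C$ with $\q\cap A=\p$ and $\kappa(\q)\simeq\kappa(\p)$, which is precisely condition (1) and the residue-field half of condition (2).

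It remains to check $\q\in\F(C)$, and this is where the \textbf{main delicate point} lies: it is exactly the specificity of the four admissible functors flagged before Definition \ref{DefFSubintegral}, and not a formal consequence of the localization. For $\F=\Spec$ there is nothing to prove. For $\F=\Max$ (and $\F=\Rmax$), maximality of $\q$ follows from the maximality of $\p$ via the second part of Proposition \ref{LemGoingUp}. For $\F=\Rspec$ (and again $\F=\Rmax$), one uses that a prime is real precisely when its residue field is real: since $\kappa(\q)\simeq\kappa(\p)$ and $\p$ is real, so is $\q$. In every case $\q\in\F(C)$, and both conditions of Definition \ref{DefFSubintegral} hold, so $A\inj A_B^{\F}$ is $\F$-subintegral. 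The only genuine work beyond the local computation is this last verification, which is why the restricted functors cannot be replaced by an arbitrary subfunctor of $\Spec$.
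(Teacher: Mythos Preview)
Your proposal is correct and follows essentially the same approach as the paper. The paper's proof simply defers the local computation (your second and third paragraphs) to Proposition~2.10 of \cite{Bernard2021} and then appends the same final verification that $\q\in\F(A_B^{\F})$ via the remark after Definition~\ref{DefFSubintegral}; you have spelled out that referenced Traverso argument in full rather than citing it.
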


\begin{proof}
By doing the exact same proof as in Proposition 2.10 of \cite{Bernard2021}, one can show that for all $\p\in \F(A)$, there exists a unique $\q\in\Spec(A_B^{\F})$ such that $\q\cap A=\p$ and $\kappa(\p)\simeq\kappa(\q)$. Because of the special nature of $\F$ (see remark after Definition \ref{DefFSubintegral}), the equiresiduality implies that $\q\in\F(A_B^{\F})$ and so $A \inj A_B^{\F}$ is $\F$-subintegral.
\end{proof}

\begin{prop}\label{LemTransSub}
Let  $A \inj C \inj B$ be integral extensions of rings. Then the following properties are equivalent 
\begin{enumerate}
    \item[1)] The extension $A\inj B$ is $\Ftir$subintegral.
    \item[2)] The extensions $A\inj C$ and $C\inj B$ are $\Ftir$subintegral.
\end{enumerate}
\end{prop}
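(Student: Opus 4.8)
The plan is to prove both implications by chasing prime ideals and residue fields through the tower $A \inj C \inj B$, using at each stage the going-up property of Proposition \ref{LemGoingUp}, the fact that $\F$ is a subfunctor of $\Spec$ (so that contraction sends $\F(B)$ into $\F(C)$ and $\F(C)$ into $\F(A)$), and the remark after Definition \ref{DefFSubintegral} that equiresiduality forces membership in $\F$. Throughout, for a prime $\mathfrak{r}$ of $C$ lying over $\p \in \F(A)$ and a prime $\q$ of $B$ lying over $\mathfrak{r}$, the contraction maps give a tower of injections of residue fields $\kappa(\p) \inj \kappa(\mathfrak{r}) \inj \kappa(\q)$, and I will repeatedly invoke the elementary fact that if a composition of injective field homomorphisms is an isomorphism, then so is each factor.

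For the implication $2) \Rightarrow 1)$, assume $A \inj C$ and $C \inj B$ are both $\F$-subintegral. Given $\p \in \F(A)$, the first extension yields the unique $\mathfrak{r} \in \Spec(C)$ over $\p$, with $\mathfrak{r} \in \F(C)$ and $\kappa(\p) \simeq \kappa(\mathfrak{r})$; the second extension then yields the unique $\q \in \Spec(B)$ over $\mathfrak{r}$, with $\q \in \F(B)$ and $\kappa(\mathfrak{r}) \simeq \kappa(\q)$. Composing the isomorphisms gives $\kappa(\p) \simeq \kappa(\q)$, so the second defining condition holds for $A \inj B$. For the first condition, any prime $\q'$ of $B$ over $\p$ contracts to a prime $\q' \cap C$ of $C$ over $\p$, which must equal $\mathfrak{r}$ by uniqueness in $A \inj C$; then $\q'$ lies over $\mathfrak{r}$ and uniqueness in $C \inj B$ forces $\q' = \q$. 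Hence $A \inj B$ is $\F$-subintegral.

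For the converse $1) \Rightarrow 2)$, assume $A \inj B$ is $\F$-subintegral. I first treat $A \inj C$. Existence of a prime over any $\p \in \F(A)$ is going-up; for uniqueness, if $\mathfrak{r}, \mathfrak{r}'$ both lie over $\p$, I lift them via going-up to primes of $B$ over $\p$, which coincide by uniqueness in $A \inj B$, forcing $\mathfrak{r} = \mathfrak{r}'$ after contraction to $C$. Writing $\q$ for the unique prime of $B$ over $\p$, one checks $\q \cap C = \mathfrak{r}$, so the tower $\kappa(\p) \inj \kappa(\mathfrak{r}) \inj \kappa(\q)$ has an isomorphism as its composition; the factorization argument gives $\kappa(\p) \simeq \kappa(\mathfrak{r})$, and the remark after Definition \ref{DefFSubintegral} then yields $\mathfrak{r} \in \F(C)$. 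Thus $A \inj C$ is $\F$-subintegral. For $C \inj B$, I take $\mathfrak{r} \in \F(C)$ and set $\p := \mathfrak{r} \cap A$, which lies in $\F(A)$ since $\F$ is a subfunctor of $\Spec$; by the previous step $\mathfrak{r}$ is the unique prime of $C$ over $\p$. Any prime of $B$ over $\mathfrak{r}$ lies over $\p$, so uniqueness in $A \inj B$ gives at most one such prime while going-up gives existence, and the same residue-field tower shows $\kappa(\mathfrak{r}) \simeq \kappa(\q)$, whence $\q \in \F(B)$. This establishes $C \inj B$ as $\F$-subintegral.

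I expect the main obstacle to be the residue-field and realness bookkeeping in the direction $1) \Rightarrow 2)$: one must be sure that the intermediate prime $\mathfrak{r}$ genuinely lands in $\F(C)$ rather than merely in $\Spec(C)$, and this is exactly where the special nature of the four functors enters through the remark after Definition \ref{DefFSubintegral} (equiresiduality implies $\F$-membership), together with the subfunctor property that contraction preserves $\F$. The prime-counting parts are routine consequences of going-up and the uniqueness clauses, but keeping each uniqueness statement at the correct level of the tower (over $\p$ in $A$ versus over $\mathfrak{r}$ in $C$) is the point requiring the most care.
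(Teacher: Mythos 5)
Your proof is correct and follows essentially the same route as the paper's: both directions are handled by contracting and lifting primes through the tower via going-up, splitting the residue-field isomorphism $\kappa(\p)\simeq\kappa(\q)$ across the intermediate injection $\kappa(\mathfrak{r})$, and invoking the remark after Definition \ref{DefFSubintegral} to place the intermediate prime in $\F(C)$. The only cosmetic difference is that you treat $A\inj C$ before $C\inj B$ in the direction $1)\Rightarrow 2)$, whereas the paper does the reverse.
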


\begin{proof}
We prove 1) implies 2). Let $\p\in \F(C)$ and $\q_1,\q_2 \in \Spec(B)$ such that $\q_1\cap C = \q_2 \cap C = \p$. We have $\q_1 \cap A = \q_1 \cap C \cap A = \p \cap A$ and the same is true for $\q_2$. Since $A\inj B$ is $\Ftir$subintegral and $\p\cap A\in\F(A)$, then $$\q_1 \cap A = \q_2 \cap A \implies \q_1 = \q_2$$
Moreover $\kappa(\p\cap A)\inj \kappa(\p)\inj \kappa(\q_i)$ and $\kappa(\p\cap A) \simeq \kappa(\q_i)$. So $\kappa(\p)\simeq \kappa(\q_i)$. Now, let us consider $\p\in\F(A)$ and $\p_1,\p_2 \in \Spec(C)$ such that $\p_1 \cap A = \p_2 \cap A = \p$. We know that $\Spec(B)\to\Spec(C)$ is surjective, so we can find $\q_1,\q_2 \in \Spec(B)$ such that $\q_1\cap C = \p_1$ and $\q_2\cap C = \p_2$. Then $\q_1\cap A = \p$ and $\q_2\cap A = \p$. Since $A\inj B$ is $\Ftir$subintegral and $\p\in\F(A)$, we get $\q_1 = \q_2$ and so $\p_1=\p_2$. Moreover $\kappa(\p)\inj \kappa(\p_i)\inj \kappa(\q_i)$ and $\kappa(\p) \simeq \kappa(\q_i)$. So $\kappa(\p)\simeq \kappa(\p_i)$.

We now show that 2) implies 1). Let's suppose that $A\inj C$ and $C \inj B$ are $\Ftir$subintegral. Let $\p\in \F(A)$, then there exists a unique element $\p'\in\Spec(C)$ such that $\p'\cap A = \p$ and $\p'\in\F(C)$. Moreover $\kappa(\p')\simeq\kappa(\p)$. Then, since $C \inj B$ is $\Ftir$subintegral, there exists a unique element $\p'' \in \Spec(B)$ such that $\p'' \cap C = \p'$ and $\kappa(\p'')\simeq\kappa(\p')$. So $\p''$ is the unique element of $\Spec(B)$ such that $\p''\cap A = \p$ and $\p''\in\F(B)$. Moreover $\kappa(\p'')\simeq\kappa(\p)$. Hence $A\inj B$ is $\Ftir$subintegral.
\end{proof}

\begin{rappelprop}[Proposition \ref{PropPU}]
Let $A\inj C\inj B$ be integral extensions of rings. Then the following statements are equivalent :\begin{enumerate}
    \item[1)] The extension $A \inj C$ is $\Ftir$subintegral.
    \item[2)] The image of $C\inj B$ is a subring of $A^{\F}_B$.
\end{enumerate}
\end{rappelprop}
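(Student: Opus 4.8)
The plan is to establish the two implications separately, noting that $2)\implies 1)$ is essentially formal given the preceding results, while $1)\implies 2)$ requires a hands-on localization argument at each ideal of $\F(A)$.

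For $2)\implies 1)$, suppose the image of $C$ in $B$ is contained in $A_B^{\F}$. Identifying $C$ with its image, I would observe that we obtain a tower of integral extensions $A\inj C\inj A_B^{\F}$: indeed $A_B^{\F}\subset B$ is integral over $A$ (being a subring of the integral extension $B$), hence over $C$, and $C$ is integral over $A$ for the same reason. By Proposition \ref{PropSnSub} the extension $A\inj A_B^{\F}$ is $\Ftir$subintegral, so applying the direction $1)\implies 2)$ of the transitivity result Proposition \ref{LemTransSub} to this tower (with $A_B^{\F}$ playing the role of the top ring) yields that $A\inj C$ is $\Ftir$subintegral. This closes the easy direction.

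For $1)\implies 2)$, fix $c\in C$ and $\p\in\F(A)$; the goal is to show $c_\p\in A_\p+\Rrm(B_\p)$, since by definition $c\in A_B^{\F}$ is exactly this condition for all such $\p$. I would localize at the multiplicative set $A\setminus\p$, giving integral extensions $A_\p\inj C_\p\inj B_\p$. By condition $1)$ of $\Ftir$subintegrality of $A\inj C$ there is a unique prime $\q$ of $C$ over $\p$, so $C_\p$ is local with maximal ideal $\q C_\p$ and residue field $\kappa(\q)$; condition $2)$ gives $\kappa(\p)\simeq\kappa(\q)$. Since the composite $A_\p\to\kappa(\p)\simeq\kappa(\q)$ is surjective, there is $a\in A_\p$ whose image in $\kappa(\q)$ equals that of $c$, so $c_\p-a\in\q C_\p$. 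It then remains to check $c_\p-a\in\Rrm(B_\p)$: by going-up (Proposition \ref{LemGoingUp}) the maximal ideals of $B_\p$ are exactly the primes lying over $\p A_\p$, and each contracts in $C_\p$ to a maximal ideal, necessarily the unique one $\q C_\p$. Hence $c_\p-a\in\q C_\p$ lies in every maximal ideal of $B_\p$, i.e.\ in the Jacobson radical $\Rrm(B_\p)$, and $c_\p=a+(c_\p-a)$ gives the claim.

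The delicate step, which I expect to be the main obstacle, is this second direction: one must verify carefully that localizing at $A\setminus\p$ really produces a \emph{local} ring $C_\p$ (this is precisely where uniqueness in condition $1)$ of $\Ftir$subintegrality is used) and that $\Rrm(B_\p)$ is correctly identified as the intersection of the primes of $B$ lying over $\p$. Everything then hinges on the fact that all primes of $B$ above $\p$ sit above the single prime $\q$ of $C$. The matching of the residue of $c$ with an element of $A_\p$ uses only the triviality of the residue field extension, and the analogous computation appears around Proposition 2.10 of \cite{Bernard2021}, to which the routine verifications can be deferred.
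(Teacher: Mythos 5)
Your proof is correct and follows essentially the same route as the paper: the direction $2)\implies 1)$ is exactly the paper's argument via Proposition \ref{PropSnSub} and the transitivity Lemma \ref{LemTransSub}, while for $1)\implies 2)$ the paper defers to Proposition 2.4 of the reference and you have simply written out that standard localization argument (uniqueness of $\q$ over $\p$ makes $C_\p$ local, equiresiduality supplies $a\in A_\p$ with $c_\p-a\in\q C_\p$, and every maximal ideal of $B_\p$ contracts to $\q C_\p$, so $c_\p-a\in\Rrm(B_\p)$) in full. No gaps.
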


\begin{proof}
One can show 1) implies 2) by doing the exact same proof of Proposition 2.4 of \cite{Bernard2021} with $A_B^{\F}$ instead of $A^+_B$. The converse is also very similar : suppose that we have $A\inj C \inj A_B^{\F} \inj B$. Those extensions are integral and, by Proposition \ref{PropSnSub}, the extension $A\inj A_B^{\F}$ is $\Ftir$subintegral. Then Lemma \ref{LemTransSub} tells us that $A\inj C$ is $\Ftir$subintegral.
\end{proof}

Let us conclude this section by rewriting Proposition \ref{PropPU} in the form of a universal property theorem.

\begin{theorem}[Universal property of the $\Ftir$seminormalization]
Let $A\inj B$ be an integral extension of rings. For every intermediate extension $C$ of $A\inj B$ such that $A\inj C$ is $\Ftir$subintegral, the image of $C$ by the injection $C\inj B$ is contained in $A_B^{\F}$.

$$\xymatrix{
   A \ar@{_{(}->}[rrd]_{subint.} \ar@{^{(}->}[rr]& & A_B^{\F} \ar@{^{(}->}[rr] & & B \\
    && C \ar@{^{(}.>}[u] \ar@{_{(}->}[urr] &&
}$$
\end{theorem}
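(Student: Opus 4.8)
The plan is to observe that this theorem is simply a reformulation of the implication $1) \Rightarrow 2)$ of Proposition \ref{PropPU}, which has already been proved. First I would take an arbitrary intermediate extension $A \inj C \inj B$ with $A \inj C$ being $\Ftir$subintegral. Applying Proposition \ref{PropPU} directly yields that the image of $C$ under the inclusion $C \inj B$ is a subring of $A_B^{\F}$.

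It then remains to spell out the categorical content encoded by the diagram. Since the image of $C$ in $B$ is contained in $A_B^{\F}$, the inclusion $C \inj B$ factors as $C \inj A_B^{\F} \inj B$, where the first arrow is the corestriction of $C \inj B$ and the second is the canonical inclusion of $A_B^{\F}$ into $B$. This factorization is exactly the dotted arrow $C \to A_B^{\F}$ appearing in the diagram, and it exhibits $A_B^{\F}$ as the largest $\Ftir$subintegral extension of $A$ sitting inside $B$, which is what justifies calling this a universal property.

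I do not expect any genuine obstacle here, since all the substantive work has already been carried out: the bijective correspondence of prime ideals together with the equiresiduality was established in Proposition \ref{PropSnSub}, the transitivity of $\Ftir$subintegrality in Proposition \ref{LemTransSub}, and their combination in Proposition \ref{PropPU}. The proof of the theorem therefore amounts to an invocation of Proposition \ref{PropPU} followed by the elementary remark that an inclusion whose image lands in a given subring factors uniquely through that subring.
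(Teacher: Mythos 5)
Your proposal is correct and matches the paper exactly: the paper introduces this theorem with the words ``Let us conclude this section by rewriting Proposition \ref{PropPU} in the form of a universal property theorem'' and gives no further argument, so the content is precisely the implication $1)\Rightarrow 2)$ of Proposition \ref{PropPU} plus the observation that an inclusion whose image lands in $A_B^{\F}$ factors through it. Nothing is missing.
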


\begin{rmq}
Let $A\inj B$ be an integral extension. We have that $A \inj A_B^{\F}$ is $\Ftir$subintegral by Proposition \ref{PropSnSub}. So we can apply the universal property in the following way :
$$\xymatrix{
   A \ar@{_{(}->}[rrd]_{subint.} \ar@{^{(}->}[rr]& & A_{A_B^{\F}}^{\F} \ar@{^{(}->}[rr] & & A_B^{\F} \\
    && A_B^{\F} \ar@{^{(}.>}[u] \ar@{_{(}->}[urr] &&
}$$
Thus $A_B^{\F} \inj A_{A_B^{\F}}^{\F}$. But, by definition, $A_{A_B^{\F}}^{\F}$ is included in $A_B^{\F}$. We get the following idempotency property $$A_B^{\F} = A_{A_B^{\F}}^{\F}$$
\end{rmq}

\section{The R-seminormalization for geometric rings}\label{SectionChap3:TheRSNGeometric}

Let $X = \Spec(A)$ be an affine algebraic variety with $A$ a $\R$-algebra of finite type. Let $\R[X] := A$ denote the coordinate ring of X. We have $\R[X] \simeq \R[x_1, ..., x_n]/I$ for an ideal $I$ of $\R[x_1, ..., x_n]$. We will say that $X$ is a real affine variety if $I$ is a real ideal. A morphism $\pi : Y \to X$ between two real varieties induces the morphism $\pi^* : \R[X] \inj \R[Y]$ which is injective if and only if $\pi$ is dominant. We say that $\pi$ is of finite type (resp. is finite) if $\pi^*$ makes $\R[Y]$ a $\R[X]$-algebra of finite type (resp. a finite $\R[X]$-module). The space $X$ is equipped with the Zariski topology for which the closed sets are of the form $\Vcal(I) := \{\p \in \Spec(\R[X]) \mid I \subset \p\}$ where $I$ is an ideal of $\R[X]$. We define $X(\R) := \{\m \in \Max(\R[X]) \mid \kappa(\m) \simeq \R\} = \Rmax(\R[X])$. The Real Nullstellensatz gives us a Zariski homeomorphism between $X(\R)$ and the algebraic set $\Zcal_{\R}(I) := \{x\in \R^n \mid \forall f \in I \text{ }f(x) = 0\} \subset \R^n$. For any real variety, we can look at its complexification whose coordinate ring is given by the change of coordinate $\C[X] := \R[X]\otimes_{\R}\C$. Hence we can consider the set of its closed points $X(\C)$ and if $X$ is a real variety, we have that $X(\R)$ is Z-dense in $X(\C)$. We note $\pi_{\R} : Y(\R) \to X(\R)$ the restriction of $\pi$ to $Y(\R)$ and $\pi_{\C}$ the restriction of $\pi$ to $Y(\C)$. If X is irreducible, then we write $\K(X) := \fracloc(\R[X])$.

The goals of this section are to provide a geometric universal property of the R-seminormalization and to see that the R-seminormalization coincide with the R-Max-seminormalization for real algebraic varieties. The following theorem gives a reinterpretation of the R-subintegrality in a geometric point of view.

\begin{theorem}\label{TheoRSubEqBij}
Let $\pi : Y \to X$ be a finite morphism between real affine varieties. The following properties are equivalent
\begin{enumerate}
    \item[1)] The extension $\pi^* : \R[X] \inj \R[Y]$ is $\Rtir$subintegral.
    \item[2)] The extension $\pi^* : \R[X] \inj \R[Y]$ is $\Rmax$-subintegral.
    \item[3)] The restriction $\widetilde{\pi} : \piC^{-1}(X(\R)) \to X(\R)$ of the morphism $\piC : Y(\C) \to X(\C)$ is bijective.
    \item[4)] The morphism $\piR : Y(\R) \to X(\R)$ is bijective and $\piC^{-1}(X(\R)) = Y(\R)$.
\end{enumerate}
\end{theorem}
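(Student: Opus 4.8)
The plan is to run the equivalences as a cycle: dispose of the trivial inclusion $1)\Rightarrow 2)$, establish the two geometric equivalences $2)\Leftrightarrow 3)$ and $3)\Leftrightarrow 4)$ by a fibre analysis over the real points, and finally close the loop with the genuinely arithmetic step $2)\Rightarrow 1)$, which is where the work lies. The implication $1)\Rightarrow 2)$ is immediate: since $\Rmax(\R[X])\subseteq\Rspec(\R[X])$, the two conditions of Definition \ref{DefFSubintegral} for $\F=\Rspec$ specialize at once to the conditions for $\F=\Rmax$.

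For $2)\Leftrightarrow 3)$ I would count the complex fibre of $\piC$ over a fixed real point. Let $x\in X(\R)$ correspond to the real maximal ideal $\m\subset\R[X]$ with $\kappa(\m)\simeq\R$. By going-up (Proposition \ref{LemGoingUp}) every prime $\n$ of $\R[Y]$ over $\m$ is maximal, and since $\R[Y]$ is a finite $\R[X]$-module each $\kappa(\n)$ is $\R$ or $\C$. The fibre $\piC^{-1}(x)$ is the set of maximal ideals of $(\R[Y]/\m\R[Y])\otimes_\R\C$; a real $\n$ contributes one complex point since $\R\otimes_\R\C=\C$, while a complex $\n$ contributes the conjugate pair since $\C\otimes_\R\C\simeq\C\times\C$. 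Hence $\#\piC^{-1}(x)=\#\{\text{real }\n\}+2\,\#\{\text{complex }\n\}$, so $\widetilde\pi$ is bijective iff over each such $\m$ there is a unique prime $\n$ and it is real; the residue isomorphism $\kappa(\m)\simeq\kappa(\n)$ is then automatic ($\R\simeq\R$). This is precisely $\Rmax$-subintegrality, namely $2)$.

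The equivalence $3)\Leftrightarrow 4)$ I would get from complex conjugation $\sigma$, which acts on $Y(\C)$ and $X(\C)$, commutes with $\piC$, fixes $X(\R)$ pointwise, and has fixed locus exactly $Y(\R)$ on $Y(\C)$; moreover $\piR$ always sends $Y(\R)$ into $X(\R)$, so $Y(\R)\subseteq\piC^{-1}(X(\R))$. If $3)$ holds and $x\in X(\R)$ has unique preimage $y$, then $\sigma(y)$ lies over $\sigma(x)=x$, forcing $\sigma(y)=y$, i.e. $y\in Y(\R)$; thus $\piC^{-1}(X(\R))=Y(\R)$ and $\widetilde\pi=\piR$ is bijective, which is $4)$. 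Conversely $4)$ states that $\widetilde\pi$ is just the bijection $\piR$, giving $3)$.

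The main obstacle is $2)\Rightarrow 1)$, i.e. upgrading the unique-real-prime property from real maximal ideals to an arbitrary real prime $\p$. Here I would exploit that $\p$ is real: by the Real Nullstellensatz the real locus of $\Vcal(\p)$ is Zariski dense, so there are many real maximal ideals $\m\supseteq\p$, and $\R[X]_\p$ is a localization of each $\R[X]_\m$. Concretely, the aim is to prove the identity of seminormalizations $\R[X]^{\Rmax}_{\R[Y]}=\R[X]^{\Rspec}_{\R[Y]}$ in $\R[Y]$: the inclusion $\subseteq$ is clear, and for the reverse one transports a local condition $b_\m\in\R[X]_\m+\Rrm(\R[Y]_\m)$ down to the more generic $\p$ by localizing along $\m\supseteq\p$, using that $\Rrm$ localizes into $\Rrm$ together with the density of such $\m$. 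The subtlety, and the reason this is the hard step, is that $\R[Y]_\p$ is semilocal with maximal ideals the primes over $\p$, so one must check the glued value lands in $\R[X]_\p$ simultaneously on all of them; this is exactly where density of real points and the identity ``an ideal is real iff its residue field is real'' (the remark after Definition \ref{DefFSubintegral}) force the unique prime $\q$ over $\p$ to be real with $\kappa(\q)\simeq\kappa(\p)$. Granting this identity, the universal property of Proposition \ref{PropPU} applied with $C=\R[Y]$ gives that $\R[X]\inj\R[Y]$ is $\Rtir$subintegral iff $\R[Y]=\R[X]^{\Rspec}_{\R[Y]}=\R[X]^{\Rmax}_{\R[Y]}$ iff it is $\Rmax$-subintegral, which yields $1)\Leftrightarrow 2)$ and closes the cycle.
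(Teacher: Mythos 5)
Your treatment of $1)\Rightarrow 2)$, of $3)\Leftrightarrow 4)$ via complex conjugation, and of $2)\Leftrightarrow 3)$ by counting the complex fibre as $\#\{\text{real }\n\}+2\,\#\{\text{complex }\n\}$ is correct; the last of these is in fact a clean packaging of what the paper does in two separate implications ($2)\Rightarrow 3)$ and $4)\Rightarrow 2)$). The problem is the step you yourself identify as the hard one, $2)\Rightarrow 1)$: you reduce it to the ring identity $\R[X]^{\Rmax}_{\R[Y]}=\R[X]^{\Rspec}_{\R[Y]}$ and then only gesture at a proof. The mechanism you propose --- take $b$ with $b_\m\in\R[X]_\m+\Rrm(\R[Y]_\m)$ for all real maximal $\m\supseteq\p$ and ``localize the condition down to $\p$'' --- does not work as stated, because the Jacobson radical does not localize in the direction you need: $\Rrm(\R[Y]_\m)$ is the intersection of the primes of $\R[Y]$ lying \emph{over} $\m$, whereas $\Rrm(\R[Y]_\p)$ is the intersection of the primes lying over $\p$, and a prime over $\p$ is \emph{contained in} (not containing) primes over $\m$ by going-up. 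So $r\in\Rrm(\R[Y]_\m)$ gives no containment $r_\p\in\Rrm(\R[Y]_\p)$. The sentence beginning ``Granting this identity'' therefore skips precisely the content of the theorem. Note also that in the paper the ring identity you want (Corollary \ref{CorSemiMaxEstSemi}) is \emph{deduced from} Theorem \ref{TheoRSubEqBij}, not used to prove it, so you cannot appeal to it without an independent argument.

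What is actually needed, and what the paper supplies, is a direct argument at the level of a real prime $\p$ with $\q$ lying over it: (i) $\q$ is real, shown by comparing $\dim W(\R)$ and $\dim W(\C)$ using bijectivity of $\piR$ and Zariski density of real points of $\Vcal(\p)$; (ii) $\q$ is unique, shown by comparing the real zero sets $\Zcal_\R(\q)$ and $\Zcal_\R(\q')$ through the real maximal ideals above them and invoking the real Nullstellensatz; and (iii) $\kappa(\p)\simeq\kappa(\q)$, which is the genuinely delicate point: one takes a primitive element of the finite separable extension $\K(V)\inj\K(W)$, locates a real point of $V$ where the discriminant of its minimal polynomial does not vanish (again using density of $V(\R)$), and observes that the complex fibre there has cardinality equal to the degree, which hypothesis $2)$ forces to be $1$. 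Your proposal asserts the conclusion of (iii) (``force the unique prime $\q$ over $\p$ to be real with $\kappa(\q)\simeq\kappa(\p)$'') but supplies no mechanism for it; the remark that a prime is real iff its residue field is real gives (i) from (iii), not (iii) itself. As written, the proposal does not close the cycle.
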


\begin{proof}
1) $\implies$ 2). Suppose that $\R[X] \inj \R[Y]$ is $\Rtir$subintegral, then for all $\p\in\Rspec(\R[X])$, there exists a unique $\q\in\Spec(\R[Y])$ such that $\q\cap \R[X] = \p$. Moreover $\q\in\Rspec(\R[Y])$. In particular, it is true if $\p$ is maximal and so we get Property $2$.

2) $\implies$ 1). Suppose that for all $\m\in \Rmax(\R[X])$, we have a unique element $\m' \in \spm(\R[Y])$ such that $\m'\cap \R[X] = \m$ and $\m'\in\Rmax(\R[Y])$. Let $\p\in\Rspec(\R[X])$ and $\q\in \Spec(\R[Y])$ such that $\q\cap \R[X] = \p$. We note $V = \Vcal(\p)$ and $W = \Vcal(\q)$. 
Then $\dim V = \dim W$ since $\pi$ is finite. Moreover $\dim V(\R) = \dim V(\C)$ because $\p$ is a real ideal and we also have $\dim V(\R) = \dim W(\R)$ since $\piR$ is bijective. We get 
$$ \dim W(\R) = \dim V(\R) = \dim V(\C) = \dim V = \dim W = \dim W(\C) $$ 
So $\q \in \Rspec(\R[Y])$. Let us show that $\q$ is unique. Suppose there exists $\q'\in \Spec(\R[Y])$ such that $\q'\cap \R[X] = \p$. By the previous arguments we have $\q' \in \Rspec(\R[Y])$. Let $\m \in \Rmax(\R[Y])$ such that $\q \subset \m$ then $\q\cap \R[X] \subset \m\cap\R[X]$. Moreover, by the going-up property, we can consider $\m' \in \spm(\R[Y])$ such that $\q'\subset \m'$ and $\m'\cap \R[X] = \m\cap \R[X]$. Then, by assumption, we get $\m = \m'$. So $\Zcal_{\R}(\q) = \Zcal_{\R}(\q')$ and the real Nullstellensatz gives us $\q = \q'$.

It remains to see that $\kappa(\p') \simeq \kappa(\p)$. If we write $V = \Spec(\R[X]/\p)$ and $W = \Spec(\R[Y]/\p')$, we get the following commutative diagram :

$$\xymatrix{
    \R[X] \ar@{->>}[d]^{\pi_X} \ar@{^{(}->}[r]^{\pi^*}& \R[Y] \ar@{->>}[d]^{\pi_Y} \\
    \R[V] \ar@{^{(}->}[d] \ar@{^{(}->}[r]^{(\pi_{|_W})^*} &\R[W] \ar@{^{(}->}[d] \\
     \K(V) \ar@{^{(}->}[r] & \K(W)
}$$

As $\R[Y]$ is a finite $\R[X]$-module, we have that $\R[W]$ is a finite $\R[V]$-module. Thus $\pi_{|W}$ is a finite morphism between two irreducible varieties. Therefore we get $n:= [K(W):K(V)] < +\infty$. Since the characteristic is zero, the extension $K(V)\inj K(W)$ is separable and finite. Hence we can consider a primitive element $a\in K(W)$ such that $K(W) = K(V)(a)$. Let $F$ be the minimal polynomial of $a$. Then $\deg F = n$ and $\disc(F) \neq 0$ where $\disc(F)$ is the discriminant of $F$. So there exists a Z-open set of $V$ such that $\disc(F) \neq 0$. Since $\p\in\Rspec(\R[X])$, we have that $V(\R)$ is Z-dense in $V$ and so there exists $y\in V(\R)$ such that $\disc(F)(y,.) \neq 0$. This means 
$$ \#\piC^{-1}(y) = \#\{ \text{ complex roots of }F(y,.) \} = \deg F = n  $$
By assumption, there is a unique element of $Y(\C)$ above every element of $X(\R)$. So $\#\piC^{-1}(y) = n = 1$ and so $\kappa(\p) \simeq \kappa(\p')$.\\

Now, let us see that the properties $2,3$ and $4$ are equivalent.

$2) \implies 3)$. Property $2$ implies that for all $\m\in\Rmax(\R[X])$, there exists a unique $\m'\in\Spec(\R[Y])$ such that $\m'\cap\R[X] = \m$. By the going-up property, we have $\m'\in\spm(\R[Y])$. So, by the Nullstellensatz, we get the third property.

$3) \implies 4)$. Let $x\in X(\R)$ and $z\in Y(\C)$ such that $\piC(z) = x$, then $\piC(\overline{z})=x$. Since $z$ is supposed to be unique, we get $z=\overline{z}$ and so $z\in Y(\R)$.

$4) \implies 2)$. Suppose $4$ and take $\m\in \Rmax(\R[X])$. By the going-up property, we know that there is a finite number of prime ideals in $\R[Y]$ lying over $\m$ and that those ideals are maximal. Since we have $\piC^{-1}(X(\R)) = Y(\R)$, then all the $\m'$ are real. So, the morphism $\piR$ being bijective, we get that there is a unique prime ideal $\m'$ of $\R[Y]$ lying over $\m$. Moreover $\m'\in\Rmax(\R[Y])$ so $\kappa(\m) \simeq \kappa(\m') \simeq \R$ and we get the second property.

\end{proof}

Let $A\inj B$ be an integral extension of rings, we note \begin{center}
    $A^{\Rspec}_B = A_B^{R+}$ and $A^{\Rmax}_B = A^{R+_{\max}}_B$
\end{center}

Let us see that, if $A$ is a coordinate ring, then $A^{R+}$ and $A^{R+_{\max}}$ are also the coordinate rings of some real varieties. 

\begin{prop}
Let $\pi : Y\to X$ be a finite morphism between real affine varieties. Let $A$ be an intermediate ring between $\R[X]$ and $\R[Y]$. Then there exists a unique affine variety $Z$ such that $A = \R[Z]$. Moreover, if $X$ and $Y$ are real varieties and $\pi$ is birational, then $Z$ is a real affine variety.
\end{prop}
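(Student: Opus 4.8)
The plan is to handle the two assertions in turn: realize $Z$ as $\Spec(A)$, and then verify reality purely at the level of the ring $A$.

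First I would check the only substantive point behind existence, namely that $A$ is a finitely generated $\R$-algebra. Since $X$ is affine, $\R[X]$ is of finite type over $\R$, hence Noetherian; and since $\pi$ is finite, $\pi^*$ makes $\R[Y]$ a finite, hence Noetherian, $\R[X]$-module. The chain $\R[X]\inj A\inj \R[Y]$ exhibits $A$ as an $\R[X]$-submodule of $\R[Y]$ (it is stable under multiplication by $\R[X]$ because $\R[X]\subseteq A$), and a submodule of a Noetherian module is finite; thus $A$ is a finite $\R[X]$-module, and therefore a finitely generated $\R$-algebra by Artin--Tate. I then set $Z:=\Spec(A)$, which is an affine variety with $\R[Z]=A$. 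Uniqueness is formal: an affine variety is recovered from its coordinate ring, so any $Z'$ with $\R[Z']=A$ is isomorphic to $\Spec(A)=Z$.

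For the reality of $Z$, the key reduction is that, for any presentation $A=\R[z_1,\dots,z_m]/J$, the ideal $J$ is real if and only if $(0)$ is a real ideal of $A$, i.e. $\sum a_i^2=0$ in $A$ forces every $a_i=0$; this is an intrinsic property of the ring $A$. Now $Y$ is a real variety, so $(0)$ is a real ideal of $\R[Y]$. Given a relation $\sum a_i^2=0$ with $a_i\in A$, the inclusion $A\inj \R[Y]$ lets me read it off in $\R[Y]$, and reality there yields $a_i=0$ for all $i$. Hence $(0)$ is real in $A$ and $Z$ is a real affine variety (in particular $A$ is reduced).

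I expect the main difficulty to be bookkeeping rather than depth. The one genuinely technical point is the finiteness of $A$ over $\R[X]$; everything else is formal or immediate. It is worth flagging that reality of $Z$ follows already from $Y$ being real---reality passes to subrings---even though the corresponding geometric statement about density of real points is far from transparent. The birationality of $\pi$ and reality of $X$ are not strictly needed for this conclusion as stated; their role is to place $Z$ as a genuine intermediate variety sharing its total ring of fractions $\fracloc(\R[X])=\fracloc(\R[Y])$ with $X$ and $Y$, and I would mention the alternative argument through formally real function fields, which does use them, for completeness.
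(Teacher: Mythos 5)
Your argument for existence and uniqueness is essentially the paper's: $A$ is an $\R[X]$-submodule of the finite $\R[X]$-module $\R[Y]$, hence module-finite over the Noetherian ring $\R[X]$, and therefore a finitely generated $\R$-algebra (the appeal to Artin--Tate is superfluous once module-finiteness is established, but harmless), so $Z=\Spec(A)$ works and is unique. For the reality of $Z$ you take a genuinely different route. The paper uses birationality to place $A$ inside the normalization, $\R[X]\inj A\inj \R[X]'$, and then invokes Lemma 2.8 of \cite{IntClosure}, which guarantees that any intermediate ring of that extension is real. You instead observe that, with the paper's definition (the defining ideal $I$ is real if and only if the zero ideal of the quotient is real, a presentation-independent property), reality of the zero ideal passes to subrings, so reality of $\R[Y]$ alone forces reality of $A$. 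Your argument is more elementary, avoids the external lemma, and correctly notes that birationality and reality of $X$ are not needed for this particular conclusion; their role is rather to realize $Z$ as an intermediate variety between $X$ and its normalization, which is how the result is used later in the paper. Both proofs are valid.
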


\begin{proof}
We have that $A$ is a $\R[X]$-module because it is a submodule of $\R[Y]$. Thus it is a $\R$-algebra of finite type because so is $\R[X]$. If $\pi$ is also birational, then we get $\R[X] \inj \R[Z]\inj \R[X]'$ and by \cite{IntClosure} Lemma 2.8, the ring $\R[Z]$ is real.
\end{proof}

This lead us to define, for every real variety, a new variety called its R-seminormalization.
\begin{definition}\label{DefvarieteRSN}
Let $\pi : Y \to X$ be a finite morphism between two affine varieties over $\R$. The affine variety defined by 
$$ X^{R+}_Y := \Spec(\R[X]^{R+}_{\R[Y]}) $$
is called the $\Rtir$seminormalization of $X$ in $Y$.
\end{definition}

The $\Rtir$subintegrality being equivalent the $\Rmax$-subintegrality for affine rings, we naturally get that the $\Rmax$-seminormalization correspond to the $\Rtir$seminormalization. Note that, for the central seminormalization defined in \cite{FMQ}, this property is not true and we get two different varieties : the central seminormalization $X^{s_C}$ and the central weak-normalization $X^{w_C}$.

\begin{cor}\label{CorSemiMaxEstSemi}
Let $\pi : Y \to X$ be a finite morphism between two real affine varieties. Then
$$ \R[X]^{R+_{\max}}_{\R[Y]} = \R[X]^{R+}_{\R[Y]} $$
\end{cor}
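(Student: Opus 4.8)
The inclusion $\R[X]^{R+}_{\R[Y]} \subseteq \R[X]^{R+_{\max}}_{\R[Y]}$ is immediate from the definition of $A_B^{\F}$: since $\Rmax(\R[X]) \subseteq \Rspec(\R[X])$, the membership condition defining the $R+$-seminormalization (required at every real prime) is a priori more restrictive than the one defining the $R+_{\max}$-seminormalization (required only at the real maximal ideals). So all the content lies in the reverse inclusion, and the plan is to obtain it from the geometric criterion of Theorem \ref{TheoRSubEqBij} together with the universal property of Proposition \ref{PropPU}.

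Set $C := \R[X]^{R+_{\max}}_{\R[Y]}$. First I would record that $\R[X] \inj C$ is $\Rmax$-subintegral, which is exactly Proposition \ref{PropSnSub} applied with $\F = \Rmax$. Next I would check that $C$ is the coordinate ring of a real affine variety $Z$ fitting into a finite morphism $Z \to X$. Indeed, $C$ is an intermediate ring between $\R[X]$ and $\R[Y]$, hence equals $\R[Z]$ for a unique affine variety $Z$ by the proposition preceding Definition \ref{DefvarieteRSN}; it is a finite $\R[X]$-module, being an $\R[X]$-submodule of the finite module $\R[Y]$ over the Noetherian ring $\R[X]$, so $Z \to X$ is finite; and $Z$ is real because $C$ is a subring of the real ring $\R[Y]$, so the zero ideal of $C$ is real. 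We are then in position to apply Theorem \ref{TheoRSubEqBij} to the finite morphism $Z \to X$ between real affine varieties: the implication $2)\implies 1)$ turns the $\Rmax$-subintegrality of $\R[X] \inj C$ into $\Rtir$subintegrality. Finally, the universal property Proposition \ref{PropPU} with $\F = \Rspec$ yields that the image of $C \inj \R[Y]$ is a subring of $\R[X]^{R+}_{\R[Y]}$, that is $C \subseteq \R[X]^{R+}_{\R[Y]}$. Combining the two inclusions gives the claimed equality.

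The only genuinely delicate point is the verification that $C$ is the coordinate ring of a real variety, which is what lets us feed the extension into Theorem \ref{TheoRSubEqBij} (whose statement presupposes a finite morphism between real affine varieties). Here I expect the realness of $Z$ to follow directly from $C \subseteq \R[Y]$ and the fact that $\R[Y]$ is a real ring, rather than from any birationality of $\pi$: if $\sum c_i^2 = 0$ in $C$, then the same relation holds in $\R[Y]$, whose zero ideal is real, forcing every $c_i = 0$. Everything else is a bookkeeping chain of already-established results, so the proof reduces to assembling Proposition \ref{PropSnSub}, Theorem \ref{TheoRSubEqBij}, and Proposition \ref{PropPU} in this order.
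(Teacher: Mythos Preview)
Your proof is correct and follows essentially the same route as the paper: the easy inclusion from the definitions, then Proposition~\ref{PropSnSub} to get $\Rmax$-subintegrality of $\R[X]\inj C$, Theorem~\ref{TheoRSubEqBij} to upgrade it to $\Rtir$subintegrality, and Proposition~\ref{PropPU} to conclude. Your explicit check that $C$ is the coordinate ring of a \emph{real} affine variety (via $C\subseteq \R[Y]$ with $\R[Y]$ real) is a worthwhile addition, since the proposition preceding Definition~\ref{DefvarieteRSN} only guarantees realness under the extra hypothesis that $\pi$ is birational.
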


\begin{proof}
First, the inclusion $\R[X]^{R+}_{\R[Y]} \subset \R[X]^{R+_{\max}}_{\R[Y]}$ is clear. Now, by Proposition \ref{PropSnSub}, we know that $\R[X]^{R+_{\max}}_{\R[Y]}$ is $\Rmax$-subintegral, so by Theorem \ref{TheoRSubEqBij}, it is also $\Rtir$subintegral. Then the universal property of $\R[X]^{R+}_{\R[Y]}$ gives us $\R[X]^{R+_{\max}}_{\R[Y]} \subset \R[X]^{R+}_{\R[Y]}$.
\end{proof}

We can now rewrite the universal property of the $\Rtir$seminormalization for the geometric case. It is the biggest variety such that there is a unique complex closed point in the fiber of every real closed point.

\begin{theorem}[Universal property of the $\Rtir$seminormalization]\label{TheoPUdelaRSN}
Let $Y\to Z\to X$ be finite morphisms between real affine varieties. Then the restriction $\widetilde{\pi_Z^{}} : \pi_{Z(\C)}^{-1}(X(\R)) \to X(\R)$ of the morphism $\pi_{Z(\C)} : Z(\C)\to X(\C)$ is bijective if and only if there exits a morphism $\pi_Z^+ : X^{R+}_Y \to Z$ such that $\pi_Z^{}\circ\pi^+_Z = \pi^+$. Moreover $\pi^+_Z$ is unique and the restriction $\widetilde{\pi^+_Z} : (\pi^+_Z)^{-1}(Z(\R)) \to Z(\R)$ of the morphism $\pi^+_Z : X^{R+}_Y(\C) \to Z(\C)$ is bijective.

$$\xymatrix{
    Y \ar[rrd] \ar[rr]&& X^{R+}_Y \ar@{.>}[d]_{\pi^+_Z} \ar[rr]^{\pi^+} && X \\
    && Z \ar[urr]_{\pi_Z \text{ with }\widetilde{\pi_Z}\text{ bij}} &&
}$$

\end{theorem}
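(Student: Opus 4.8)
The plan is to reduce the geometric statement to the ring-theoretic universal property already established, via the anti-equivalence between affine varieties and their coordinate rings, using Theorem \ref{TheoRSubEqBij} to convert bijectivity into R-subintegrality. Writing $\pi : Y \to X$, the finite morphisms $Y \to Z \to X$ correspond to integral extensions $\R[X] \inj \R[Z] \inj \R[Y]$, all realized as subrings of $\R[Y]$, and $\pi^+$ corresponds to the inclusion $\R[X] \inj \R[X]^{R+}_{\R[Y]}$ inside $\R[Y]$. Throughout I would use that $\pi$ is birational and $\R[Y]$ is a domain, so that all these rings share the fraction field $\K(Y)$.

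First I would treat the equivalence. By Theorem \ref{TheoRSubEqBij} (equivalence of $1)$ and $3)$), bijectivity of $\widetilde{\pi_Z} : \pi_{Z(\C)}^{-1}(X(\R)) \to X(\R)$ is equivalent to $\R[X] \inj \R[Z]$ being $\Rtir$subintegral, and by the universal property Proposition \ref{PropPU} this is equivalent to $\R[Z] \subseteq \R[X]^{R+}_{\R[Y]}$ as subrings of $\R[Y]$. It then remains to match this inclusion with the existence of $\pi_Z^+ : X^{R+}_Y \to Z$ satisfying $\pi_Z \circ \pi_Z^+ = \pi^+$. In one direction the inclusion $\R[Z] \inj \R[X]^{R+}_{\R[Y]}$ is an $\R[X]$-algebra homomorphism and hence defines such a $\pi_Z^+$ by functoriality, the compatibility $\pi_Z \circ \pi_Z^+ = \pi^+$ being read off from the fact that the composite $\R[X] \inj \R[Z] \inj \R[X]^{R+}_{\R[Y]}$ is exactly $(\pi^+)^*$.

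The converse direction together with the uniqueness of $\pi_Z^+$ is where the only genuine work lies, and I expect it to be the main obstacle. A morphism $\pi_Z^+$ with $\pi_Z \circ \pi_Z^+ = \pi^+$ corresponds to an $\R[X]$-algebra homomorphism $\phi : \R[Z] \to \R[X]^{R+}_{\R[Y]}$, and I must show $\phi$ is forced to be the inclusion of subrings of $\R[Y]$; this simultaneously yields uniqueness and, via $z = \phi(z) \in \R[X]^{R+}_{\R[Y]}$, the inclusion $\R[Z] \subseteq \R[X]^{R+}_{\R[Y]}$ needed to close the equivalence. Here I would use birationality: each $z \in \R[Z]$ can be written $z = a/b$ with $a,b \in \R[X]$, so $bz = a$ in $\R[Y]$, and applying the $\R[X]$-algebra map gives $b\,\phi(z) = a = bz$; cancelling $b \neq 0$ in the domain $\R[Y]$ forces $\phi(z) = z$. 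This is precisely the step that requires the birational hypothesis and the common embedding into $\R[Y]$.

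Finally, for the \emph{moreover} part I would argue at the ring level. Once $\R[X] \inj \R[Z] \inj \R[X]^{R+}_{\R[Y]}$ is established, Proposition \ref{PropSnSub} gives that $\R[X] \inj \R[X]^{R+}_{\R[Y]}$ is $\Rtir$subintegral, and the transitivity Lemma \ref{LemTransSub} then forces $\R[Z] \inj \R[X]^{R+}_{\R[Y]}$ to be $\Rtir$subintegral as well. Applying Theorem \ref{TheoRSubEqBij} to the finite morphism $\pi_Z^+$ translates this back into bijectivity of $\widetilde{\pi_Z^+} : (\pi_Z^+)^{-1}(Z(\R)) \to Z(\R)$, which completes the proof.
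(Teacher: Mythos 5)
Your proof follows the paper's argument essentially verbatim: the same chain of equivalences (bijectivity of $\widetilde{\pi_Z}$ $\iff$ $\Rtir$subintegrality of $\R[X]\hookrightarrow\R[Z]$ by Theorem \ref{TheoRSubEqBij} $\iff$ $\R[Z]\subseteq\R[X]^{R+}_{\R[Y]}$ by Proposition \ref{PropPU} $\iff$ existence of $\pi_Z^+$), and the same use of Proposition \ref{PropSnSub}, Lemma \ref{LemTransSub} and Theorem \ref{TheoRSubEqBij} for the \emph{moreover} part. The only difference is that you spell out, via the birationality/cancellation argument in the domain $\R[Y]$, why any $\pi_Z^+$ with $\pi_Z\circ\pi_Z^+=\pi^+$ must induce the inclusion of subrings (giving uniqueness at the same time), a step the paper dispatches in one line by appealing to injectivity of $\pi_Z$ --- so your version is, if anything, slightly more careful on that point.
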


\begin{proof}
We have the following equivalences :
$$\begin{array}{rcl}
    \text{The morphism }\widetilde{\pi_Z^{}} \text{ is bijective} & \iff & \R[X] \inj \R[Z] \text{ is } \Rtir\text{subintegral} \text{ ( by Theorem \ref{TheoRSubEqBij} )}\\
     & \iff & \R[Z] \inj \R[X_{\R[Y]}^{R+}] \text{ ( by Proposition \ref{PropPU} )}\\
     & \iff & \exists \pi_Z^+ : X_Y^{R+} \to Z \text{ dominant, such that } \pi_Z^{} \circ \pi^+_Z = \pi^+
\end{array}$$
We get the uniqueness of $\pi_Z^+$ by injectivity of $\pi_Z$. Since $\R[X] \inj \R[X^{R+}_Y]$ is $\Rtir$subintegral then Lemma \ref{LemTransSub} says that $\R[Z] \inj \R[X^{R+}_Y]$ is also $\Rtir$subintegral. So, by Theorem \ref{TheoRSubEqBij}, the morphism $\widetilde{\pi_Z^+}$ is bijective.
\end{proof}

\section{Functions which become polynomial on the R-seminormalization}\label{SectionChap3:FunctionsOnTheRSN}


Let $X$ be a real variety. The normalization $X'$ of a real variety is defined by the coordinate ring $\R[X]'_{\K(X)}$ and its seminormalization $X^+$ is defined by the coordinate ring $\R[X]^+_{\R[X']}$. Moreover, we have $\C[X]' = \R[X']\otimes_{\R} \C$ and $\C[X]^+ = \R[X^+]\otimes_{\R} \C$.\\

For a reduced affine variety $X$ with a finite number of irreducible components, the total ring of fractions $\K(X)$ is a product of fields. Moreover, in order to look at the normalization of $\R[X]$ in $\K(X)$, one can look at the normalization of each irreducible component in its field of fractions.\\

The R-seminormalization $X^{R+}$ of $X$ is the variety $X^{R+}_{X'}$ defined in Definition \ref{DefvarieteRSN}. The R-seminormalization $X^{R+}$ comes with a finite, birational morphism $\pi^{R+} : X^{R+} \to X$ such that $\piR^{R+}$ is bijective and $(\piC^{R+})^{-1}(X(\R)) = X^{R+}(\R)$. Its universal property is given by Theorem \ref{TheoPUdelaRSN}.\\

We have the following extensions of rings :
$$ \R[X] \inj \R[X^+] \inj \R[X^{R+}] \inj \R[X'] \inj \K(X). $$ 

In all this section, we will consider real varieties with real irreducible components. The set of real closed points $X(\R)$ can be seen as a subset of $\R^n$ for some $n$. Hence we will be able to use the Euclidean topology on $X(\R)$ induced by $\R^n$.\\

In the same spirit as in \cite{Bernard2021}, we want to study the set $\KR(X(\R))$ of functions defined on $X(\R)$ which become polynomials on the R-seminormalization. This can be useful, for example, to construct the R-seminormalization of a given real variety. We start by identifying the elements of $\R[X']$ which comes from an element of $\R[X^{R+}]$. Then we show that the elements of $\KR(X(\R))$ are integral rational functions which are almost continuous on $X(\R)$ for the topology of $X(\C)$. Finally, we provide a last characterization of the elements of $\KR(X(\R))$ with properties concerning their graph.\\

\begin{definition}
Let $X$ be a real variety and $\pi_{\R}^{R+}$ be its R-seminormalization morphism. We will denote by $\KR(X(\R))$ the ring of functions $f : X(\R) \to \R$ such that $f\circ\pi_{\R}^{R+} \in \R[X^{R+}]$.
\end{definition}

We start by identifying the elements of $\R[X^{R+}]$ in $\R[X']$. Note that this identification is interesting but does not identify the ring $\KR(X(\R))$ independently from the variety $X^{R+}$. 

\begin{prop}\label{PropConstanteSurLesFibres}
Let $X$ be a real affine variety and let $\pi':X'\to X$ be the normalization morphism of $X$. Then
$$ \R[X^{R+}] \simeq \{ p\in\R[X'] \mid \forall z_1,z_2 \in \piC'^{-1}(X(\R)) \text{ , } \piC'(z_1) = \piC'(z_2) \implies p_{_{\C}}(z_1) = p_{_{\C}}(z_2) \}$$
\end{prop}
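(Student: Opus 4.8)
The plan is to test the defining membership of the $\Rtir$seminormalization pointwise at real maximal ideals and to translate it into the stated constancy condition on complex fibres. First I would use Corollary~\ref{CorSemiMaxEstSemi} to replace $\Rspec(\R[X])$ by $\Rmax(\R[X])$, so that
$$\R[X^{R+}] = \{\, b\in\R[X'] \mid \forall\,\m\in\Rmax(\R[X]),\ b_\m\in\R[X]_\m+\Rrm(\R[X']_\m)\,\},$$
where $b_\m$ is the image of $b$ in the localization $\R[X']_\m$ of $\R[X']$ at the multiplicative set $\R[X]\setminus\m$. Each such $\m$ is a point $x\in X(\R)$ with $\kappa(\m)\simeq\R$, so it suffices to prove, for a fixed $x$, that the local membership is equivalent to $b_\C$ being constant on $(\piC')^{-1}(x)$.

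Next I would analyse the localization. Since $\pi'$ is finite, $\R[X']_\m$ is semilocal and, by the going-up Proposition~\ref{LemGoingUp}, its maximal ideals are exactly the finitely many maximal ideals $\m'$ of $\R[X']$ lying over $\m$; hence $\Rrm(\R[X']_\m)=\bigcap_{\m'}\m'\R[X']_\m$ is their intersection. Unwinding $b_\m=a+r$ with $a\in\R[X]_\m$ and $r\in\Rrm(\R[X']_\m)$ shows that the membership holds if and only if $b\equiv a\pmod{\m'}$ for every $\m'$ over $\m$. As $\kappa(\m)\simeq\R$, the residue $c:=a\bmod\m$ is a real number whose image in each $\kappa(\m')$ is the common residue of $b$. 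Thus the local condition is equivalent to the existence of a single $c\in\R$ with $b\equiv c\pmod{\m'}$ for all $\m'$ lying over $\m$.

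The final step is the geometric translation. The maximal ideals $\m'$ over $\m$ split into those with $\kappa(\m')\simeq\R$, each contributing one real point of $(\piC')^{-1}(x)$, and those with $\kappa(\m')\simeq\C$, each contributing a conjugate pair $z,\bar z$; every point of the complex fibre arises this way. For $b\in\R[X']$ one has $b_\C(\bar z)=\overline{b_\C(z)}$, and $b_\C(z)$ is the image of $b\bmod\m'$ under the embedding $\kappa(\m')\inj\C$ corresponding to $z$. Hence the condition ``there is $c\in\R$ with $b\equiv c\pmod{\m'}$ for all $\m'$'' says precisely that $b_\C$ takes the constant value $c$ on $(\piC')^{-1}(x)$; at a complex pair, constancy forces $b_\C(z)=\overline{b_\C(z)}\in\R$, which matches the requirement that $b\bmod\m'$ lie in the image of $\kappa(\m)\simeq\R$. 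Quantifying over all $x\in X(\R)$ then identifies the two sets as subrings of $\R[X']$, via the inclusion induced by the birational dominant morphism $\pi'$.

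I expect the main obstacle to be the two-way verification in the complex-residue case: reconciling the algebraic demand that the common residue lie in the image of $\kappa(\m)\simeq\R$ with the geometric demand that $b_\C$ be constant on a conjugate pair, and correctly handling the Jacobson-radical description of $\R[X]_\m+\Rrm(\R[X']_\m)$. This is exactly the point where the argument runs parallel to the computation behind Proposition~\ref{PropSnSub} and to the complex statement identified in \cite{Bernard2021}, so I would lean on those for the routine localization bookkeeping.
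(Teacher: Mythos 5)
Your proposal is correct and follows essentially the same route as the paper: after reducing to real maximal ideals via Corollary \ref{CorSemiMaxEstSemi}, both arguments localize at $\m_x$, identify $\Rrm(\R[X']_{\m_x})$ with the intersection of the finitely many maximal ideals lying over $\m_x$ (split into real ones and conjugate complex pairs), and translate the decomposition $b=a+r$ into constancy of $b_\C$ on the complex fibre, with conjugation-invariance forcing the constant to be real. The only difference is cosmetic: you phrase the fibre computation through residue fields and embeddings into $\C$, where the paper evaluates the radical part $\beta_\C$ directly at the points of the fibre.
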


\begin{proof}
We will denote by $\pi : X' \to X^{R+}$ the morphism induced by the extension $\R[X^{R+}]\inj\R[X']$. Let $q \in \RR$ and $x \in X(\R)$. We want to show that for all $z_1,z_2 \in \piC'^{-1}(x)$, we have $q\circ\piC(z_1) = q\circ\piC(z_2)$.
We have 

$$\begin{array}{ccc}
\R[X]_{\m_x} &\inj & \R[X']_{\m_x} \smallskip \\
\m_x\R[X]_{\m_x} &\mapsfrom& \m_1 \R[X']_{\m_x}\\
&&\hspace{0.5cm}\vdots\\
&&\m_n \R[X']_{\m_x}
\end{array}$$

where the $\m_i \in \Max\R[X']$ are such that $\m_i\cap \R[X] = \m_x$. By definition of $\RR$, we can write 
$(q\circ\pi_{\R})_x = \alpha\circ\pi'_{\R}+\beta$ with $\alpha\in\R[X]_x$ and $\beta \in\Rrm(\R[X']_x)$. Then 
$$\forall z\in\piC'^{-1}(\{x\})\text{ \hspace{0.6cm}}(q\circ\piC)_x(z) = \alpha\circ\piC'(z)+\beta_{_{\C}}(z) = \alpha_{_{\C}}\circ\piC'(z)+\beta_{_{\C}}(z) = \alpha_{_{\C}}(x)+\beta_{_{\C}}(z)$$

Moreover, if we write $\piC'^{-1}(\{x\}) = \{ x_1, ..., x_k, z_{k+1}, \overline{z_{k+1}} ..., z_n, \overline{z_n} \}$, then we have 
$$\beta_{_{\C}} \in \bigcap_{i=1}^n \m_i\C[X']_{\m_x} = \bigcap_{i=1}^k \m_{x_i}\C[X']_{\m_x} \cap \bigcap_{i=k+1}^n \m_{z_i}\cap \m_{\overline{z_i}}\C[X']_{\m_x}$$

So for all $z\in \piC'^{-1}(\{x\})$, we get $\beta_{\C}(z) = 0$ and finally
$$ \forall z\in \piC'^{-1}(\{x\}) \text{ \hspace{0.6cm}} q\circ\piC(z) = \alpha(x)$$

Conversely, let $p\in\R[X']$, $x\in X(\R)$ and $\piC'^{-1}(\{x\}) = \{ x_1, ..., x_k, z_{k+1}, \overline{z_{k+1}} ..., z_n, \overline{z_n} \}$ with $x_i\in X'(\R)$ and $z_i\in X'(\C)\setminus X'(\R)$. Suppose there exists $\alpha\in\C$ such that, for all $z\in\piC'^{-1}(\{x\})$, we have $p_{\C}(z) = \alpha$. Since $p(z) = p(\overline{z})=\overline{p(z)}=\alpha$, we get that $\alpha\in\R$ and so $\alpha_x\in\R[X]_{\m_x}$. Then we can write 
$$ p_x = \alpha_x + (p_x - \alpha_x) \in \R[X']_{\m_x}$$
Moreover, for all $z \in \piC^{-1}(x)$, we have $(p-\alpha)_{\C} \in \m_z\C[X']$. So
$$\begin{array}{cl}
     (p_x - \alpha_x)_{\C} \in & \R[X']_{\m_x}\cap\displaystyle{\bigcap_{z\in \piC'^{-1}(\{x\})} \m_z\C[X']_{\m_x}} \\ [0.8cm]
     & = \displaystyle{\R[X']_{\m_x}\cap\bigcap_{i=1}^k \m_{x_i}\C[X']_{\m_x} \cap \bigcap_{i=k+1}^n \m_{z_i}\cap \m_{\overline{z_i}}\C[X']_{\m_x}} \\ [0.7cm]
     & = \displaystyle{\bigcap_{i=1}^k \m_{x_i}\R[X']_{\m_x} \cap \bigcap_{i=k+1}^n ( \m_{z_i}\cap \overline{\m_{z_i}})\R[X']_{\m_x}} \\ [0.7cm]
     & = \displaystyle{\bigcap_{\m\cap\R[X] = \m_x} \m\R[X'] } \\ [0.6cm]
     & = \Rrm(\R[X']_{\m_x})
\end{array}$$
So $p_x - \alpha_x \in \Rrm(\R[X']_{\m_x})$.
\end{proof}

\begin{cor}
Let $X$ be a real affine variety and $\pi' : X' \to X$ be the normalization morphism. Then
$$ \KR(X(\R)) \simeq \Bigl\{ p\in\R[X'] \mid \forall z_1,z_2 \in \piC'^{-1}(X(\R)) \text{ , } \piC'(z_1) = \piC'(z_2) \implies p_{_{\C}}(z_1) = p_{_{\C}}(z_2) \Bigr\} $$
\end{cor}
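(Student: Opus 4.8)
The plan is to identify the ring $\KR(X(\R))$ with the ring $\R[X^{R+}]$ and then apply the preceding Proposition \ref{PropConstanteSurLesFibres} directly. The key observation is that the Corollary's right-hand side is \emph{verbatim} the right-hand side of Proposition \ref{PropConstanteSurLesFibres}, so once I establish the ring isomorphism $\KR(X(\R)) \simeq \R[X^{R+}]$ the result follows by transitivity of $\simeq$. Thus the entire content reduces to showing that restricting functions along the R-seminormalization morphism gives an isomorphism between $\R[X^{R+}]$ and $\KR(X(\R))$.

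First I would unwind the definition of $\KR(X(\R))$: it is the ring of functions $f : X(\R) \to \R$ such that $f \circ \pi_{\R}^{R+} \in \R[X^{R+}]$. I would define the natural map $\Phi : \R[X^{R+}] \to \KR(X(\R))$ sending a polynomial $g \in \R[X^{R+}]$ to the function $f$ on $X(\R)$ determined by the requirement $f \circ \pi_{\R}^{R+} = g$. For this to be well-defined as a function on $X(\R)$, I need that $g$, viewed on the real closed points $X^{R+}(\R)$, descends along $\pi_{\R}^{R+}$; this is exactly where I invoke that $\pi_{\R}^{R+} : X^{R+}(\R) \to X(\R)$ is \emph{bijective}, a property recorded immediately after Definition \ref{DefvarieteRSN} and flowing from Theorem \ref{TheoRSubEqBij}. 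Bijectivity lets me set $f(x) := g\bigl((\pi_{\R}^{R+})^{-1}(x)\bigr)$, so $f$ is a genuine function on $X(\R)$ and $f \circ \pi_{\R}^{R+} = g$ holds by construction, placing $f$ in $\KR(X(\R))$.

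Next I would check that $\Phi$ is a ring isomorphism. Injectivity is immediate: if $\Phi(g) = 0$ then $g \circ (\pi_{\R}^{R+})^{-1} = 0$ on $X(\R)$, hence $g$ vanishes on the Zariski-dense set $X^{R+}(\R)$, and since $X^{R+}$ is a real variety (its components being real), $g = 0$ in $\R[X^{R+}]$. Surjectivity is built into the definition: any $f \in \KR(X(\R))$ has $f \circ \pi_{\R}^{R+} \in \R[X^{R+}]$, and this element is a preimage. That $\Phi$ respects addition and multiplication is routine, as both operations are defined pointwise and compatibly with composition by $\pi_{\R}^{R+}$. This yields the desired isomorphism $\R[X^{R+}] \simeq \KR(X(\R))$.

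The only genuine subtlety, and the step I expect to require the most care, is the passage from the identification of $\R[X^{R+}]$ as a subring of $\R[X']$ (via Proposition \ref{PropConstanteSurLesFibres}) to its identification as the function ring $\KR(X(\R))$ on $X(\R)$: I must ensure these two descriptions of $\R[X^{R+}]$ agree, i.e. that a polynomial $p \in \R[X']$ lying in the subring of Proposition \ref{PropConstanteSurLesFibres} induces, through the chain $\R[X^{R+}] \inj \R[X']$ and restriction to real points, precisely the $\KR$-functions. Since Proposition \ref{PropConstanteSurLesFibres} already gives $\R[X^{R+}] \simeq \{\,p \in \R[X'] \mid \ldots\,\}$ as an abstract isomorphism, the cleanest route is simply to compose that isomorphism with $\Phi$, checking that the composite is the restriction-to-$X(\R)$ map claimed in the Corollary. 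Once the bijectivity of $\pi_{\R}^{R+}$ is in hand, this bookkeeping is straightforward and the Corollary follows.
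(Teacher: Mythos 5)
Your proposal is correct and matches the paper's (implicit) argument: the corollary is stated without proof precisely because it follows by composing the isomorphism of Proposition \ref{PropConstanteSurLesFibres} with the identification $\R[X^{R+}] \simeq \KR(X(\R))$ induced by the bijection $\piR^{R+}$, which is exactly what you carry out. The points you flag as needing care (well-definedness via bijectivity of $\piR^{R+}$, injectivity via Zariski-density of $X^{R+}(\R)$ in the real variety $X^{R+}$) are handled correctly.
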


\begin{rmq}
Note that if $f\in\KR(X(\R))$ and $p$ is the element of $\R[X']$ above $f$, then for all $x\in X(\R)$ and for all $z\in \piC'^{-1}(x)$, we have $p_{_{\C}}(z) = f(x) \in \R$.
\end{rmq}

In the paper \cite{FMQ}, the authors introduced the central seminormalization ( resp. weak-normalization ) of real algebraic varieties. The idea of those constructions is to glue together the central points of the normalization over the central points of the variety (resp. over the maximal central points). The central locus of a real variety being the Euclidean closure of its real regular locus. They have shown that, for a real algebraic variety $X$, the coordinate ring of its central weak-normalization $X^{w_c}$ corresponds to the integral closure of $\R[X]$ in the ring $\KO(\Cent(X))$ of continuous rational functions on the central points of $X(\R)$. Also, the coordinate ring of its central seminormalization $X^{s_c}$ correspond to the integral closure of $\R[X]$ in the ring $\RO(\Cent(X))$ of regulous functions on the central points of $X(\R)$. Those are the continuous rational functions which stay rational by restriction to the real closed points of a real subvariety of $X$. The elements of $\KO(X(\R))$ and $\RO(X(\R))$ have been extensively studied in real algebraic geometry. One can found more information about regulous and rational continuous functions in the survey of Kucharz and Kurdyka \cite{KucharzKurdyka2018}.

The following lemmas \ref{LemR+Condition1et2}, \ref{LemHOestContinue} and \ref{LemHOestRegulue} shows that the elements of $\KR(X(\R))$ are regulous integral functions and so that they become polynomial on $\R[X^{s_c}]$ and $\R[X^{w_c}]$.

\begin{lemma}\label{LemR+Condition1et2}
Let $X$ be a real affine variety and $f \in \KR(X(\R))$. Then $f$ is rational and there exists a monic polynomial $P(t)\in \C[X][t]$ such that $P(f)=0$ on $X(\R)$.
\end{lemma}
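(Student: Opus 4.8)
The plan is to transport the question to the normalization $X'$. By the Corollary identifying $\KR(X(\R))$ with the fibrewise-constant polynomials on $X'$, the function $f$ is carried by some $p \in \R[X']$ whose complexification $p_{_\C}$ is constant along each complex fibre of $\piC'$ over a real point; moreover, by the Remark following that Corollary, $f(x) = p_{_\C}(z)$ for every $x \in X(\R)$ and every $z \in \piC'^{-1}(x)$. Both assertions then reduce to the two basic features of the normalization: that $\pi' : X' \to X$ is birational and that $\R[X']$ is integral over $\R[X]$.

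For the rationality of $f$, I would use birationality of $\pi'$, which gives $\R[X'] \subset \K(X') = \K(X)$, so that $p$ is literally a rational function on $X$. On the dense Zariski-open locus $U \subseteq X$ over which $\pi'$ restricts to an isomorphism, each $x \in U(\R)$ has a single fibre point $z$, and the identity $f(x) = p_{_\C}(z)$ reads $f(x) = p(x)$. Thus $f$ agrees with the rational function $p$ on the dense set $U(\R)$, which is exactly the meaning of $f$ being rational.

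For the integrality, I would use that $p \in \R[X']$ is integral over $\R[X]$, so that there is a monic relation
$$ p^m + (q_{m-1}\circ\pi')\,p^{m-1} + \cdots + (q_0\circ\pi') = 0 $$
in $\R[X']$, with all $q_i \in \R[X]$. Fix $x \in X(\R)$ and choose any $z \in \piC'^{-1}(x)$, which exists because the finite morphism $\piC'$ is surjective onto $X(\C)$. Complexifying the relation and evaluating at $z$, using $(q_i\circ\pi')_{_\C}(z) = q_i(\piC'(z)) = q_i(x)$ and $p_{_\C}(z) = f(x)$, we obtain
$$ f(x)^m + q_{m-1}(x)\,f(x)^{m-1} + \cdots + q_0(x) = 0 . $$
Since $x$ was arbitrary, the monic polynomial $P(t) := t^m + q_{m-1}\,t^{m-1} + \cdots + q_0$, regarded in $\R[X][t] \subseteq \C[X][t]$, satisfies $P(f) = 0$ on $X(\R)$.

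The only step requiring care is the evaluation in the last display: one must check that each pulled-back coefficient $q_i \circ \pi'$ is constant equal to $q_i(x)$ on the whole fibre $\piC'^{-1}(x)$ — immediate, since it factors through $\piC'$ — and that this fibre is nonempty, which follows from the finiteness (hence surjectivity) of $\pi'$. Everything else is a direct consequence of the fibrewise-constancy description of $\KR(X(\R))$; I note in passing that the argument in fact yields a relation with coefficients already in $\R[X]$, which is slightly stronger than the $\C[X]$ stated.
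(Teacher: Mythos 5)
Your proof is correct. The engine of both arguments is identical --- the monic relation is produced by the integrality of $\R[X']$ over $\R[X]$, and the real content is how to descend that relation to a pointwise identity on $X(\R)$ --- but the descent is done differently. The paper stays on the level of rings: it regards $P(f\circ\piR^{R+})$ as an element of $\R[X^{R+}]$ killed by the injection $\R[X^{R+}]\inj\R[X']$, hence zero, and then evaluates using the surjectivity of $\piR^{R+}:X^{R+}(\R)\to X(\R)$ (which, despite the paper's citation of going-up alone, really rests on the $\Rtir$subintegrality of $\R[X]\inj\R[X^{R+}]$, since restrictions of finite morphisms to real points are not surjective in general). You instead descend through the complex fibres of the normalization: you invoke Proposition \ref{PropConstanteSurLesFibres} and the Remark following its Corollary to get $p_{_{\C}}(z)=f(x)$ for every $z\in\piC'^{-1}(x)$, and then only need the surjectivity of $\piC'$, which is immediate from lying-over for the integral extension $\C[X]\inj\C[X]'$. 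What your route buys is that the only surjectivity used is the uncontroversial complex one; what it costs is a heavier reliance on Proposition \ref{PropConstanteSurLesFibres} (which, fortunately, precedes this lemma in the paper, so there is no circularity). Your rationality argument via birationality of $\pi'$ and agreement with $p$ on a dense open set matches the paper's one-line appeal to birationality of $\pi^{R+}$, and your closing remark that the coefficients may be taken in $\R[X]$ agrees with the paper, whose own proof produces $P(t)\in\R[X][t]$ even though the statement only asks for $\C[X][t]$.
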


\begin{proof}
Let $f:X(\R)\to\R$ be such that $f\circ \piR^{R+} \in \RR$. First we have $f\in \K(X)$ because $\pi^{R+}$ is birational. Now, since $f\circ \piR^{R+} \in \RR \implies f\circ \piR' \in \R[X']$, then there exists a monic polynomial $P(t) \in \R[X][t]$ such that $$P(f\circ\piR') = (f\circ\piR')^n+(a_{n-1}\circ\piR')(f\circ\piR')^{n-1}+...+(a_0\circ\piR') = 0$$ 
Since we have an injection $\R[X^{R+}]\inj\R[X']$, we get 
$$(f\circ\piR^{R+})^n+(a_{n-1}\circ\piR^{R+})(f\circ\piR^{R+})^{n-1}+...+(a_0\circ\piR^{R+}) = 0$$
Now, by Proposition \ref{LemGoingUp}, we have $\piR^{R+}$ surjective. So 
$$f^n+a_{n-1}f^{n-1}+...+a_0 = 0$$
So $f$ is integral over $\R[X]$.
\end{proof}

\begin{lemma}\label{LemHOestContinue}
Let $X$ be a real affine variety and $f \in \KR(X(\R))$. Then
\begin{center}$f$ is continuous for the Euclidean topology on $X(\R)$.\end{center}
\end{lemma}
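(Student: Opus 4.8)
The plan is to reduce the statement to sequential continuity and then to exploit the properness of the normalization morphism on complex points. Since $X(\R)$ sits inside some $\R^n$ with the Euclidean topology it is metrizable, so it suffices to show that $f(x_n) \to f(x)$ whenever $x_n \to x$ in $X(\R)$. By Proposition \ref{PropConstanteSurLesFibres} and the remark following it, the function $f$ is represented by a polynomial $p \in \R[X']$ satisfying $p_{\C}(z) = f(\piC'(z))$ for every $z \in \piC'^{-1}(X(\R))$; in particular $f$ is the push-down of the genuinely continuous function $p_{\C} : X'(\C) \to \C$, and its value at a real point does not depend on the chosen preimage. The whole difficulty is thereby transferred to controlling preimages under $\piC'$.

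The key geometric ingredient I would establish first is that $\piC' : X'(\C) \to X(\C)$ is proper for the Euclidean topology, with compact fibers. This follows from finiteness of the normalization: writing $\R[X'] = \R[X][y_1,\dots,y_m]$ with each $y_i$ satisfying a monic relation $y_i^{d_i} + a_{i,d_i-1} y_i^{d_i-1} + \dots + a_{i,0} = 0$ with $a_{i,j} \in \R[X]$, one embeds $X'(\C)$ as a closed subset of $X(\C) \times \C^m$ lying over $X(\C)$. Over a compact $K \subset X(\C)$ the coefficients $a_{i,j}$ stay bounded, hence so do the admissible roots $y_i$, so that $\piC'^{-1}(K)$ is closed and bounded, thus compact. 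This is the main technical point, but it is classical (finite morphisms are proper, and properness passes to the associated map on complex points).

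With properness in hand I would run a subsequence argument. Fix $x \in X(\R)$ and a sequence $x_n \to x$ in $X(\R)$, and for each $n$ choose $z_n \in \piC'^{-1}(x_n)$, so that $f(x_n) = p_{\C}(z_n)$. The points $x_n$ lie in a compact subset of $X(\C)$, so by properness the $z_n$ lie in a compact subset of $X'(\C)$; any subsequence of $(z_n)$ therefore has a further subsequence $z_{n_k} \to z$, and continuity of $\piC'$ forces $\piC'(z) = x$, i.e. $z \in \piC'^{-1}(x)$. By continuity of $p_{\C}$ and fiberwise constancy of $p_{\C}$ on $\piC'^{-1}(x)$ we get $f(x_{n_k}) = p_{\C}(z_{n_k}) \to p_{\C}(z) = f(x)$. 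Thus every subsequence of $(f(x_n))$ admits a further subsequence converging to $f(x)$, and the subsequence criterion in the metric space $\R$ yields $f(x_n) \to f(x)$. The expected obstacle, namely that the lifts $z_n$ converge only along subsequences and possibly to different preimages, is neutralized precisely by the fiberwise constancy from Proposition \ref{PropConstanteSurLesFibres}, which makes the limiting value $f(x)$ independent of the chosen preimage.
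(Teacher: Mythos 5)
Your proof is correct, but it takes a genuinely different route from the paper's. The paper argues purely topologically on the real points of the R-seminormalization: for a Euclidean-closed $F\subset\R$, it writes $f^{-1}(F)=\piR^{R+}\bigl((f\circ\piR^{R+})^{-1}(F)\bigr)$, using that $f\circ\piR^{R+}$ is a polynomial (hence continuous), that $\piR^{R+}$ is surjective, and that $\piR^{R+}$ is a closed map for the Euclidean topology (citing Lemma 3.1 of the Fichou--Monnier--Quarez paper); this takes three lines. You instead pass to the normalization and its \emph{complex} points, representing $f$ via Proposition \ref{PropConstanteSurLesFibres} as a fiberwise-constant polynomial $p_\C$ on $X'(\C)$, and run a properness-plus-subsequence argument. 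Your argument is sound: you correctly lift through $\piC'$ rather than $\piR'$ (the latter need not be surjective, which is the central difficulty motivating the whole paper), the surjectivity of $\piC'$ on closed points follows from finiteness, and the fiberwise constancy from the remark after Proposition \ref{PropConstanteSurLesFibres} makes the limit independent of the chosen lifts. What your route buys is that it is essentially the specialization to real sequences of the harder implication in Proposition \ref{PropCritereSequentiel}, whose proof in the paper uses exactly your properness and subsequence scheme (via Lemma 4.8 of \cite{Bernard2021}); what the paper's route buys is brevity and the avoidance of any compactness argument, at the cost of invoking an external closedness result for finite morphisms on real points.
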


\begin{proof}
Let $f\in \KR(X(\R))$ and $F$ be an Euclidean closed set of $\R$. Since $f\circ\piR^{R+} \in \R[X^{R+}]$, we have that 
$ (\piR^{R+})^{-1}(f^{-1}(F)) = (f\circ\piR^{R+})^{-1}(F) \text{ is closed.} $
By \cite{FMQ} Lemma 3.1, the function $\piR^{R+}$ is closed for the Euclidean topology. Then
$$ \piR^{R+}\Bigl( (\piR^{R+})^{-1}(f^{-1}(F)) \Bigr) = f^{-1}(F) \text{ is closed.}$$
and so $f$ is continuous.
\end{proof}

\begin{lemma}\label{LemHOestRegulue}
Let $X$ be a real affine variety, $V\subset X$ be a real subvariety of $X$ and $f \in \KR(X(\R))$. Then $$f_{|V(\R)}\text{ is rational}$$
\end{lemma}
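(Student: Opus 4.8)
The plan is to show that $f_{|V(\R)}$ agrees, on a Zariski-dense subset of $V(\R)$, with a genuine rational function on $V$ produced by a trace (norm-type) construction on the normalization, exploiting that $f$ lifts to a regular function on $X'$ that is constant on the complex fibres lying over the real points of $X$.

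First I would reduce to the case where $V$ is irreducible: since we work with real varieties whose irreducible components are real, $V(\R)$ is Zariski-dense in $V$, and rationality of $f_{|V(\R)}$ may be tested on each component. Next, using the Corollary to Proposition~\ref{PropConstanteSurLesFibres} together with the remark following it, I would lift $f$ to an element $p\in\R[X']$ on the normalization $\pi':X'\to X$ such that $p_{\C}(z)=f\bigl(\pi'_{\C}(z)\bigr)$ for every $z$ lying over a real point of $X$; in particular $p$ is constant, with value $f(v)$, on each complex fibre $(\pi'_{\C})^{-1}(v)$ with $v\in V(\R)$.

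Then I would set $W':=(\pi')^{-1}(V)\subset X'$ and $\rho:=\pi'_{|W'}:W'\to V$, which is finite because $\pi'$ is, and put $\phi:=p_{|W'}\in\R[W']$. By generic flatness, together with the fact that in characteristic $0$ a dominant finite morphism of varieties is generically étale, there is a Zariski-dense open $V_0\subset V$ (obtained by removing the images of the components of $W'$ that do not dominate $V$, and the ramification locus) over which $\rho$ restricts to a finite étale morphism of some degree $d\geq 1$; thus each fibre $\rho_{\C}^{-1}(v)$ with $v\in V_0$ consists of exactly $d$ distinct points. The key step is the descent: the trace $\tau:=\rho_*\phi$ (the sum of $\phi$ over the fibre) is a regular function on $V_0$, hence an element of $\K(V)$, and for $v\in V_0\cap V(\R)$ one has $\rho_{\C}^{-1}(v)=(\pi'_{\C})^{-1}(v)$, on which $\phi$ takes the constant value $f(v)$; therefore $\tau(v)=d\,f(v)$. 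Consequently $f$ coincides with the rational function $\tfrac{1}{d}\tau\in\K(V)$ on the Zariski-dense subset $V_0\cap V(\R)$ of $V(\R)$, which is exactly the assertion that $f_{|V(\R)}$ is rational.

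The main obstacle is precisely this descent step, namely manufacturing a rational function on $V$ itself in the delicate situation where $V$ lies inside the indeterminacy locus of $f$, so that one cannot simply restrict a representation $a/b$ of $f$ to $V$. Passing to the fibre trace on the normalization resolves this uniformly, but it requires the careful choice of the dense open $V_0$ on which $\rho$ is finite étale of constant degree, the density of $V_0\cap V(\R)$ in $V$ (which rests on $V$ being a \emph{real} variety), and the fibrewise constancy of $\phi$ supplied by Proposition~\ref{PropConstanteSurLesFibres}.
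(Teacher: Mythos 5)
Your proof is correct, but it follows a genuinely different route from the paper's. The paper stays on the R-seminormalization: since $\R[X]\inj\R[X^{R+}]$ is $\Rtir$subintegral and $V$ is real, the prime $\p\in\Rspec(\R[X])$ defining $V$ has a \emph{unique} prime $\q$ of $\R[X^{R+}]$ above it with $\kappa(\p)\simeq\kappa(\q)$; writing $W$ for the corresponding subvariety of $X^{R+}$, this equiresiduality gives $\K(W)=\K(V)$, so the restriction of the polynomial $f\circ\pi^{R+}$ to $W$ is already an element of $\K(V)$, and bijectivity of $\piR^{R+}$ identifies it with $f_{|V(\R)}$ — no descent argument is needed. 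You instead work on the normalization $X'$, where the preimage of $V$ may have several components and a nontrivial residue field extension, and you manufacture the rational function on $V$ by a trace: over a dense open $V_0$ where $\rho=\pi'_{|W'}$ is finite étale of degree $d$, you set $\tau=\rho_*\phi$ and use the fibrewise constancy from Proposition~\ref{PropConstanteSurLesFibres} (and its remark) to get $\tau=d\,f$ on $V_0\cap V(\R)$, which is nonempty and Zariski dense because $V$ is real. Both arguments are sound and both use the realness of $V$ in an essential way (uniqueness of $\q$ with isomorphic residue field in the paper; density of $V_0\cap V(\R)$ for you). What your approach buys is independence from the structure theory of $X^{R+}$ over $V$: you only need the description of $\KR(X(\R))$ inside $\R[X']$, at the cost of invoking generic flatness, generic étaleness in characteristic $0$, and the trace construction. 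The paper's argument is shorter and gives slightly more, namely that $f_{|V(\R)}$ is induced by a regular function on the subvariety $W$ of $X^{R+}$, not merely by a rational function defined on a dense open of $V$.
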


\begin{proof}
Let $V$ be a subvariety of $X$. Then there exists $\p \in \Rspec(\R[X])$ such that $\R[V] \simeq \R[X]/\p$. So there is a unique $\q \in \Spec(\RR)$ such that $\q\cap\R[X] = \p$. We note $W$ the subvariety of $X^{R+}$ such that $\R[W]\simeq \RR / \q$. Then we get the following commutative diagram.

$$\xymatrix{
    \R[X] \ar@{->>}[d] \ar@{^{(}->}[r]^{\left(\pi^{R+}\right)^*}& \R[X^{R+}] \ar@{->>}[d] \\
    \R[V] \ar@{^{(}->}[d] \ar@{^{(}->}[r]^{\left(\pi_{|W}^{R+}\right)^*} &\R[W] \ar@{^{(}->}[d] \\
     \K(V) \ar@{=}[r] & \K(W),
}$$
We have $f\circ\piR^{R+} \in \RR$ so $f_{|V(\R)}\circ\pi_{|W(R)}^{R+} = (f\circ\pi^{R+})_{|W(\R)} \in \R[W]$. Then $\frac{ f_{|V(\R)}\circ\pi_{|W(R)}^{R+} }{1} \in \K(W)$ and so $f_{|V(\R)}\in\K(V)$.
\end{proof}

\begin{rmq}
We have shown that the elements of $\KR(X(\R))$ are regulous functions and that they are integral over $\R[X]$. So $$ \KR(X(\R)) \subset \R[X]'_{\RO(X(\R))} \subset \R[X]'_{\KO(X(\R))} $$
\end{rmq}

In the same spirit as for $X^{s_c}$ and $X^{w_c}$, one may wonder if the ring $\KR(X(\R))$ can also be seen as the integral closure of $\R[X]$ in a ring of functions over $X(\R)$. Moreover, we might expect this ring to be the set of functions on $X(\R)$ which are continuous for the Euclidean topology of $X(\C)$. Indeed, the idea of the seminormalization is to glue together the points of the normalization lying over a same point of $X$ and its coordinate ring corresponds to the ring of rational functions which are continuous on $X(\C)$ for the Euclidean topology of $X(\C)$. Since the idea behind the R-seminormalization is to glue together the points of the normalization lying over a same real point of $X$, the continuity on $X(\R)$ for the topology of $X(\C)$ seems to be the right notion to consider. There is, in fact, a little subtlety. The elements of $\KR(X(\R))$ are only defined on $X(\R)$ and they can not necessarily be extended on all $X(\C)$. This is why the correct type of functions to use are the rational functions which satisfies Condition 3 of the next proposition.

\begin{prop}\label{PropCritereSequentiel}
Let $X$ be a real affine variety and let $f : X(\R) \to \R$. Then $f\in\KR(X(\R))$ if and only if it verifies the following properties :
\begin{enumerate}
    \item The function $f$ is rational.
    \item The function $f$ is integral over $\R[X]$.
    \item For all $x\in X(\R)$, for all $(z_n)_n\in \Dom(f)\cup X(\R)$ such that $z_n \to x$ then $f(z_n) \to f(x)$.
\end{enumerate}
\end{prop}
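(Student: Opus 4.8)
The plan is to prove both implications by relating the abstract function $f$ on $X(\R)$ to its lift $p \in \R[X']$ on the normalization, exploiting the characterization of $\KR(X(\R))$ already established in Proposition \ref{PropConstanteSurLesFibres} together with the continuity and integrality lemmas. First I would dispose of the forward direction ($f \in \KR(X(\R)) \implies 1,2,3$) quickly: Conditions 1 and 2 are precisely the content of Lemma \ref{LemR+Condition1et2}, so it remains to verify the sequential condition 3. For this I would invoke the identification $f \circ \piR^{R+} \in \R[X^{R+}]$ and the fact, recorded in the remark after Proposition \ref{PropConstanteSurLesFibres}, that the lift $p \in \R[X']$ satisfies $p_{\C}(z) = f(\pi'_{\C}(z))$ for every $z$ above a real point. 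Since $p$ is a genuine polynomial, it is continuous on $X'(\C)$ for the Euclidean topology; combining this with the fact that $\piC^{R+}$ is a finite (hence Euclidean-closed and proper) morphism whose restriction over $X(\R)$ is bijective onto $X(\R)$, I would argue that a sequence $z_n \to x$ in $\Dom(f) \cup X(\R)$ lifts to a convergent sequence upstairs, and push the limit back down to conclude $f(z_n) \to f(x)$. Alternatively, one notes that $f$ is already continuous on $X(\R)$ by Lemma \ref{LemHOestContinue}, which handles the case $z_n \in X(\R)$; the genuinely new content is the behaviour for $z_n \in \Dom(f) \setminus X(\R)$, where rationality lets one compare values along the graph.

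For the converse ($1,2,3 \implies f \in \KR(X(\R))$), the strategy is to produce the lift $p \in \R[X']$ explicitly and then verify the fiber condition of Proposition \ref{PropConstanteSurLesFibres}. Since $f$ is rational and integral over $\R[X]$ (Conditions 1 and 2), the element $f$ lies in $\K(X)$ and is integral, hence belongs to the integral closure $\R[X']$; call this element $p$. So $p$ is automatically a regular function on the normalization, and what must be checked is that $p_{\C}$ is constant on each complex fiber $\piC'^{-1}(x)$ for $x \in X(\R)$. Given such an $x$ and two points $z_1, z_2 \in \piC'^{-1}(x)$, I would use Condition 3 to force $p_{\C}(z_1) = p_{\C}(z_2)$: since $X(\R)$ is Zariski-dense in $X(\C)$ and $\pi'$ is finite, one can approach the fiber point $z_i$ by a sequence of real points $x_n^{(i)} \to x$ downstairs lying in $\Dom(f)$, whose $p$-values $p_{\C}(\text{lift of } x_n^{(i)})$ converge to $p_{\C}(z_i)$ by continuity of the polynomial $p$, while simultaneously these equal $f(x_n^{(i)})$, which by Condition 3 converge to $f(x)$. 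Thus both $p_{\C}(z_1)$ and $p_{\C}(z_2)$ equal the common limit $f(x)$, giving equality on the fiber and hence $p \in \R[X^{R+}]$ by Proposition \ref{PropConstanteSurLesFibres}.

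I expect the main obstacle to be the converse direction, and specifically the construction of the approximating real sequences that simultaneously converge to a prescribed complex fiber point $z_i$ while remaining in $\Dom(f) \cap X(\R)$ downstairs. The difficulty is that the values of $f$ are only controlled on real points and on $\Dom(f)$, whereas the fiber point $z_i \in X'(\C)$ may be genuinely non-real, so one cannot directly evaluate $f$ there; the whole point of Condition 3 is to transport the limiting value $f(x)$ along sequences in $\Dom(f) \cup X(\R)$, and one must arrange that $\pi'_{\C}$ maps suitable downstairs sequences to sequences accumulating at $z_i$. Making this rigorous requires the local structure of the finite morphism $\pi'$ near $x$ — in particular that every point of the complex fiber is a limit of lifts of real points approaching $x$, which follows from $X'(\R)$ being Euclidean-dense in the real locus combined with the continuity of the branches of $\pi'^{-1}$, or more robustly from a curve-selection or Puiseux-type argument on each analytic branch at $x$. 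Once this approximation lemma is in hand, both the constancy on fibers and the matching of the constant with $f(x)$ follow from the continuity of the polynomial $p = f$ on $X'(\C)$, and the proof closes by appeal to Proposition \ref{PropConstanteSurLesFibres}.
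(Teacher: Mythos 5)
Your forward direction is essentially the paper's argument (lift the sequence through the proper map $\piC'$, work inside the compact preimage of a closed ball, extract convergent subsequences, and use the fiber-constancy of the lift $g\in\R[X']$ from Proposition \ref{PropConstanteSurLesFibres}), and it is fine modulo the routine care that the lifted sequence only subconverges. The genuine gap is in your converse. You propose to prove $g_{\C}(z_1)=g_{\C}(z_2)=f(x)$ for $z_1,z_2\in(\piC')^{-1}(x)$ by approximating each fiber point $z_i$ by lifts of \emph{real} points $x_n^{(i)}\to x$ lying in $\Dom(f)\cap X(\R)$, and you suggest the needed approximation follows from Euclidean density of $X'(\R)$ in the real locus or a curve-selection/Puiseux argument. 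This step fails: a point of the complex fiber over a real point $x$ need not be approachable by lifts of real points at all. Take $X=\Spec(\R[x,y]/\langle y^2-x^2(x-1)\rangle)$: the origin is an isolated point of $X(\R)$, so every real sequence converging to it is eventually constant, while the two fiber points in the normalization $t^2=x-1$ over the origin are $t=\pm i$, at positive Euclidean distance from $X'(\R)$. No branch-selection argument can produce your approximating sequences there, and for the eventually constant sequence the identity $g(\text{lift of }x_n^{(i)})=f(x_n^{(i)})$ degenerates into precisely the statement being proved. This is not a marginal case: non-real fiber points over real points are exactly where the R-seminormalization does something new.

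The repair is to read Condition 3 as the paper intends (see its figure): $\Dom(f)$ is the complex domain $\Dcal(q_{\C})\subset X(\C)$ of a rational representative $p/q$, so the hypothesis applies to sequences of \emph{complex} points converging to $x$. The converse is then immediate without any real approximation: $\Dcal(q\circ\piC')$ is a Zariski-dense open, hence Euclidean-dense, subset of $X'(\C)$, so each fiber point $z$ is a limit of some $z_n\in\Dcal(q\circ\piC')$; pushing down gives $\piC'(z_n)\in\Dom(f)$ with $\piC'(z_n)\to x$, Condition 3 yields $f(\piC'(z_n))\to f(x)$, and continuity of $g$ together with $g(z_n)=f(\piC'(z_n))$ forces $g(z)=f(x)$. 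By restricting the hypothesis to $\Dom(f)\cap X(\R)$ you have discarded exactly the part of Condition 3 that makes the converse true.
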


\definecolor{ttqqqq}{rgb}{0.2,0.,0.}
\definecolor{ffqqqq}{rgb}{1.,0.,0.}
\definecolor{xdxdff}{rgb}{0.49019607843137253,0.49019607843137253,1.}
\begin{figure}[h]
\centering
\begin{tikzpicture}[line cap=round,line join=round,>=triangle 45,x=0.6cm,y=0.6cm]
\clip(-12,-2) rectangle (3.9,9);
\draw [line width=2.pt,domain=-13.71950139981348:15.082414948500274] plot(\x,{(-0.-0.*\x)/21.});
\draw [line width=2.pt,color=ffqqqq] (2.48,-7.35479550578517) -- (2.48,11.547897600006063);
\draw (-10,7) node[anchor=north west] {$X(\C)$};
\draw (-12,1) node[anchor=north west] {$X(\R)$};
\begin{scriptsize}
\draw[color=blue] (2.8,-0.3) node {$x$};
\draw [fill=xdxdff] (2.48,0) circle (2.5pt);
\draw[color=ffqqqq] (1.2,8) node {$\Indet(f)$};
\draw [fill=black] (-5.82,2.86) circle (1.0pt);
\draw[color=black] (-5.657720986529486,3.1530806739225934) node {$z_1$};
\draw [fill=black] (-4.78,2.18) circle (1.0pt);
\draw[color=black] (-4.62415939508282,2.4640396129581497) node {$z_2$};
\draw [fill=black] (-3.3,1.52) circle (1.0pt);
\draw[color=black] (-3.1312370963265255,1.7979665873591877) node {$z_3$};
\draw [fill=black] (-1.72,1.) circle (1.0pt);
\draw[color=black] (-1.569410691473786,1.2926698093185958) node {$z_4$};
\draw [fill=black] (-0.22,0.68) circle (1.0pt);
\draw[color=black] (-0.05352035735200936,0.9711173142018555) node {$z_5$};
\draw [fill=ttqqqq] (1.14,0.3) circle (1.0pt);
\draw[color=ttqqqq] (1.3015937292113968,0.580660712988671) node {$z_6$};
\draw [fill=black] (1.92,0.12) circle (1.0pt);
\draw[color=black] (2.0825069316377665,0.3969164300648193) node {$z_7$};
\draw [fill=black] (2.14,0.08) circle (1.0pt);
\draw[color=black] (2.369607373706285,0.43136848311304143) node {};
\draw [fill=black] (2.2698100967769284,0.04852954575141109) circle (1.0pt);
\draw[color=black] (2.4844475505336923,0.38543241238207854) node {};
\end{scriptsize}
\end{tikzpicture}
\caption{Illustration of Condition 3 in Proposition \ref{PropCritereSequentiel}}
\end{figure}
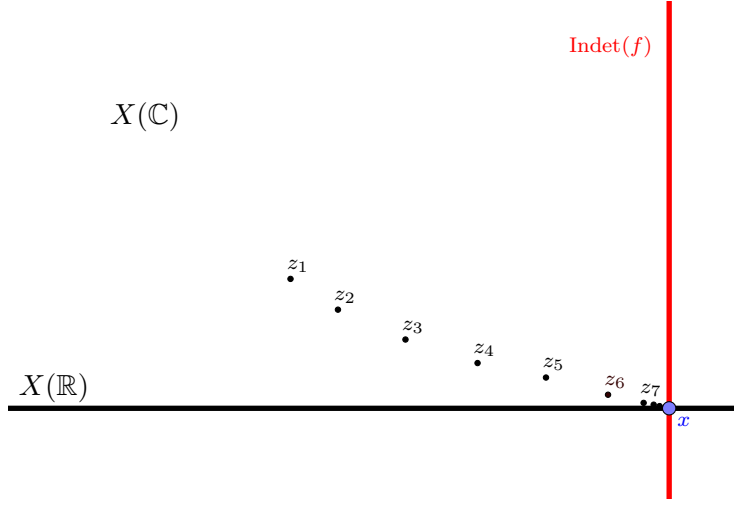

\begin{proof}
Let $f:X(\R) \to \R$ be integral over $\R[X]$ and suppose there exists $p,q\in\R[X]$ such that $f = p/q$ on $\Dcal(q)$. Then there exists $g\in\R[X']$ such that $f\circ \piC' = g$ on $(\piC')^{-1}(\Dcal(q)) = \Dcal(q\circ\piC')$. Note that $f$ can be extended on $\Dcal(q_{\C})\cup X(\R)$. Let $x\in X(\R)$, by Proposition \ref{PropConstanteSurLesFibres} and Lemma \ref{LemR+Condition1et2}, we just have to show that the following propositions are equivalent
\begin{enumerate}
    \item For all $z_1', z_2' \in (\piC')^{-1}(x)$ we have $g_{\C}(z_1')=g_{\C}(z_2') = f(x)$.
    \item For all $(z_n)_n\in \Dcal(q_{\C})\cup X(\R)$ such that $z_n \to x$ then $f(z_n) \to f(x)$.
\end{enumerate}
We prove $1 \implies 2$. Let $(z_n)_n\in \Dcal(q_{\C})\cup X(\R)^{\mathbb{N}}$ be such that $z_n \to x$ and suppose that $\left(f(z_n)\right)_n$ is convergent. We can consider an open ball $B(x,\epsilon) \subset X(\C)$ and an integer $N\in\mathbb{N}$ such that for all $n>N$ we have $z_n\in B(x,\epsilon)$. In particular, it is also true for $B := \overline{B(x,\epsilon)}$ which is compact. By \cite{Bernard2021}, Lemma 4.8, the map $\piC'$ is a proper map and so $(\piC')^{-1}(B)$ is compact. Now, if for all $n>N$ we consider $z_n'\in(\piC')^{-1}(z_n)$, then we obtain a sequence $(z_n')_{n>N}$ whose elements are contained in the compact set $(\piC')^{-1}(B)$. So there exists a convergent subsequence $(z_{n_k}')_{n_k>N}$ and its limit, that we note $l$, belongs to $X'(\C)$ because this set is closed for the Euclidean topology. Moreover, since $\piC'$ is continuous, we have $z_{n_k} = \piC'(z_{n_k}) \to \piC'(l)$. So $\piC'(l) = x$. Then, by continuity of $g$, we have $f(z_{n_k}) = g(z_{n_k}') \to g(l) = f(x)$. This means that the limit of $\left(f(z_n)\right)_n$ is $f(x)$.

Now suppose that $\left(f(z_n)\right)_n$ is not necessarily convergent. Since, for all $n>N$, $f\circ\piC'(z_n') = g(z_n')$, then $\{f(z_n)\}_{n>N} \subset g\left( (\piC')^{-1}(B) \right)$ which is a compact set because $g$ is continuous. Then, by applying the arguments of the preceding paragraph, we can show that every convergent subsequence of $\left(f(z_n)\right)_{n>N}$ admits the same limit which is $f(x)$. So the sequence $\left(f(z_n)\right)_{n>N}$ is convergent and we get $f(z_n)\to f(x)$.

Now we prove $2 \implies 1$. Let $z\in(\piC')^{-1}(x)$. Since $\Dcal(q\circ\piC')$ is dense in $X'(\C)$, we can consider a sequence $(z_n)_n \in \Dcal(q\circ\piC')^{\mathbb{N}}$ such that $z_n \to z$. By continuity of $\piC'$, we have $\piC'(z_n) \to \piC'(z) = x$, so $f(\piC'(z_n)) \to f(\piC'(z))$, by assumption on $f$. The function $g$ being continuous, we have $g(z_n) \to g(z)$ and since for all $n\in\mathbb{N}$, $g(z_n) = f\circ\piC'(z_n)$, we get that $g(z) = f(\piC'(z)) = f(x)$.
\end{proof}

In the case of real curves, the poles of $f$ are small enough for $f$ to be well defined in a neighborhood of $X(\R)$ in $X(\C)$. Thus we get the following corollary.
\begin{cor}
Let $X$ be a real curve. Then 
$$ \KR(X(\R)) = \{ f : X(\R) \to \R \mid f\in\R[X]'\text{ and }f\text{ is continuous on }X(\R)\text{ in }X(\C) \} $$
\end{cor}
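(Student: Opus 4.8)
The plan is to deduce the corollary directly from the sequential criterion of Proposition \ref{PropCritereSequentiel}, the only extra input being the one-dimensionality of $X$. That proposition characterizes $\KR(X(\R))$ as the set of $f : X(\R) \to \R$ that are $(1)$ rational, $(2)$ integral over $\R[X]$, and $(3)$ sequentially continuous at every $x \in X(\R)$ along sequences taken in $\Dom(f) \cup X(\R)$. By Lemma \ref{LemR+Condition1et2}, conditions $(1)$ and $(2)$ together say exactly that $f$ is the restriction to $X(\R)$ of an element of the normalization $\R[X]'$, so the whole point is to identify condition $(3)$ with the continuity of $f$ on $X(\R)$ for the topology of $X(\C)$; this is where the curve hypothesis enters.

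First I would make precise the claim that, for a curve, the poles of $f$ form a finite set. Writing $g := f \circ \pi' \in \R[X']$ for the regular pull-back of $f \in \R[X]'$ to the normalization $\pi' : X' \to X$, the function $f$ fails to be single-valued at a point $x \in X(\C)$ only when $g$ is non-constant on the fibre $(\pi')^{-1}(x)$, and such $x$ lie in the non-injectivity locus of $\pi'$, which is contained in the non-normal locus of $X$. Since $X$ is a curve and $\pi'$ is finite and birational, this locus is finite. Hence $\Dom(f)$ is $X(\C)$ minus a finite set, so $\Dom(f) \cup X(\R)$ contains a punctured neighborhood in $X(\C)$ of every point of $X(\R)$; this is the precise content of the statement that $f$ is well defined in a neighborhood of $X(\R)$ in $X(\C)$.

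With this in hand the two inclusions are short. If $f \in \KR(X(\R))$ then $(1)$ and $(2)$ give $f \in \R[X]'$, while $(3)$ asserts that $f(z_n) \to f(x)$ whenever $z_n \to x \in X(\R)$ through points where $f$ is defined, that is, that $f$ is continuous on $X(\R)$ for the topology of $X(\C)$. Conversely, if $f \in \R[X]'$ is continuous on $X(\R)$ in $X(\C)$, then $(1)$ and $(2)$ hold automatically by Lemma \ref{LemR+Condition1et2}, and since, by the previous paragraph, the admissible approach set $\Dom(f) \cup X(\R)$ differs from a full neighborhood of $X(\R)$ only by finitely many complex points---which any sequence may avoid---continuity in $X(\C)$ yields condition $(3)$. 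Proposition \ref{PropCritereSequentiel} then gives $f \in \KR(X(\R))$.

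The main obstacle is the second paragraph: one must argue cleanly that the indeterminacy set of an integral rational function on a curve is finite, and that deleting these finitely many complex points does not affect the sequential-continuity test at real points. This is exactly where $\dim X = 1$ is essential---in higher dimension the indeterminacy locus can have codimension one and meet $X(\R)$, so that $\Dom(f)$ is no longer a neighborhood of $X(\R)$ and condition $(3)$ becomes strictly stronger than naive continuity. For curves, the properness of $\piC'$ (used in the proof of Proposition \ref{PropCritereSequentiel}, via \cite{Bernard2021}) together with the finiteness of the fibres lets one lift an approaching sequence to $X'(\C)$ and conclude by the continuity of the polynomial $g$, exactly as in the fibre argument of Proposition \ref{PropConstanteSurLesFibres}.
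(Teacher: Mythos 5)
Your proposal is correct and follows essentially the same route as the paper: invoke Proposition \ref{PropCritereSequentiel}, observe that on a curve the set where a rational representative $p/q$ is undefined is finite (so $\Dom(f)\cup X(\R)$ contains a punctured $X(\C)$-neighborhood of every real point), and conclude that condition (3) of that proposition is precisely continuity on $X(\R)$ for the topology of $X(\C)$. Your detour through the non-injectivity locus of $\pi'$ is unnecessary --- the finiteness of $\Dcal(q)^c$ in a curve suffices --- but it does no harm.
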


\begin{proof}
Let $f\in \KR(X(\R))$. Since $X$ is a curve, then $\Dom(f_{\C}) = X(\C)\setminus\{\text{finite number of points}\}$. So, for all $x\in X(\R)$, there exists $\epsilon > 0$ such that $B_{X(\C)}(x,\epsilon) \cap \Dom(f)^c = x$. By the previous proposition, for all $(z_n)_n \in B_{X(\C)}(x,\epsilon)^{\mathbb{N}}$ such that $z_n \to x$, then $f(z_n) \to f(x)$. So $f$ is continuous at $x$ in $X(\C)$.

\end{proof}

We want now to give a characterization of the elements of $\KR(X(\R))$ with properties on their graphs as in Theorem 4.20 of \cite{Bernard2021}. This will allow us, for instance, to provide non trivial examples of elements of $\KR(X(\R))$. We start by proving two necessary conditions on the graph for a function to be in $\KR(X(\R))$.

\begin{lemma}\label{LemR+Condition3}
Let $X$ be a real affine variety and $f \in \KR(X(\R))$. Then the graph $\Gamma_f$ is Zariski closed in $X(\R)\times\mathbb{A}^1(\R)$.
\end{lemma}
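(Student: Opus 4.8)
The plan is to realize $\Gamma_f$ as the real locus of the image of a closed set under the finite morphism $\pi^{R+}$, using that $f$ becomes a regular function on the $\Rtir$seminormalization. Writing $g := f\circ\piR^{R+}$, the hypothesis $f\in\KR(X(\R))$ means precisely that $g\in\RR$. Since the graph of a regular function is the zero set of $s-g$, where $s$ denotes the coordinate on $\A^1$, the scheme–theoretic graph $\Gamma_g := \Vcal(s-g)\subset X^{R+}\times\A^1$ is Zariski closed and projects isomorphically onto $X^{R+}$; its real locus is the graph of $g$ on $X^{R+}(\R)$.

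First I would introduce the morphism $\Pi := \pi^{R+}\times\mathrm{id}_{\A^1} : X^{R+}\times\A^1 \to X\times\A^1$. Finiteness is stable under base change, so $\Pi$ is finite and hence a closed map; as $\Gamma_g$ is Zariski closed, $\Gamma := \Pi(\Gamma_g)$ is Zariski closed in $X\times\A^1$. The proof then reduces to the set–theoretic identity $\Gamma_f = \Gamma\cap\bigl(X(\R)\times\A^1(\R)\bigr)$, which suffices because the Zariski topology on $X(\R)\times\A^1(\R)$ is the one induced from $X\times\A^1$.

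For the inclusion $\Gamma_f\subseteq\Gamma$: given $x\in X(\R)$, bijectivity of $\piR^{R+}$ yields a point $w\in X^{R+}(\R)$ with $\piR^{R+}(w)=x$, and then $\Pi(w,g(w))=(x,g(w))=(x,f(x))$, so $(x,f(x))\in\Gamma$. The reverse inclusion is the crux. Take a real point $(x,t)\in\Gamma$. Since $\Pi|_{\Gamma_g}:\Gamma_g\to\Gamma$ is finite and surjective, it is surjective on complex points (by the going-up property, Proposition \ref{LemGoingUp}), so there is $w\in X^{R+}(\C)$ with $\piC^{R+}(w)=x$ and $g_{\C}(w)=t$. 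The defining property $(\piC^{R+})^{-1}(X(\R))=X^{R+}(\R)$ of the $\Rtir$seminormalization then forces $w\in X^{R+}(\R)$, whence $t=g(w)=f\bigl(\piR^{R+}(w)\bigr)=f(x)$ and $(x,t)\in\Gamma_f$.

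I expect the main obstacle to be precisely this last step, namely ruling out spurious points in the real locus of $\Gamma$. A priori the fiber of $\Pi$ over a real point $(x,t)$ could meet $\Gamma_g$ in a non-real complex point of $X^{R+}$, whose value is not controlled by the real function $g$; it is exactly the fact that every complex point of $X^{R+}$ lying over a real point of $X$ is itself real that eliminates this possibility. This is where the specific nature of the $\Rtir$seminormalization, rather than that of an arbitrary finite birational model, is genuinely used.
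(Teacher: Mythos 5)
Your proof is correct, and its overall strategy coincides with the paper's: realize $\Gamma_f$ as the image of the graph of the regular function $g=f\circ\piR^{R+}$ under $\pi^{R+}\times\Id$. Where you genuinely diverge is in how closedness of that image is justified. The paper never leaves the real points: since $\R[X]\inj\R[X^{R+}]$ is $\Rmax$-subintegral, Proposition \ref{LemFiniBijDoncHomeoSurSpec} makes $\piR^{R+}\times\Id$ a Z-homeomorphism between the real loci, so the Z-closed real graph of $g$ maps onto a Z-closed set and no complex fibers ever enter the discussion. You instead take the Zariski-closed image of the scheme-theoretic graph under the finite morphism $\Pi$ and must then identify its real locus with $\Gamma_f$; this is where you correctly isolate the crux, namely excluding a non-real point of $X^{R+}$ lying over a real point of $X$ that could contribute a spurious real point $(x,t)$ with $t\neq f(x)$, and you correctly kill it with the condition $(\piC^{R+})^{-1}(X(\R))=X^{R+}(\R)$ from Theorem \ref{TheoRSubEqBij}. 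Your route costs this extra fiber argument, but in exchange it only uses the general fact that finite morphisms are closed, whereas the paper implicitly needs the homeomorphism statement of Proposition \ref{LemFiniBijDoncHomeoSurSpec} to persist after taking the product with $\A^1$ (i.e.\ that $\R[X][t]\inj\R[X^{R+}][t]$ is still $\Rmax$-subintegral), a small verification it does not spell out. Both arguments are sound.
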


\begin{proof}
Let $f\in \KR(X(\R))$ and $\pi^{R+} : X^{R+} \to X$ be the $\Rtir$seminormalization morphism. By Theorem \ref{TheoRSubEqBij}, we have that $\piR^{R+}$ is bijective. So 
$$ \Gamma_f = \left\{ \left(x;f(x) \right) \mid x\in X(\R)\right\} = \left\{ \left(\piR^{R+}(x);f\circ\piR^{R+}(x) \right) \mid x\in X^{R+}(\R)\right\}$$
But since $f\circ\piR^{R+}$ is a polynomial and, by Proposition \ref{LemFiniBijDoncHomeoSurSpec}, the morphism $\piR^{R+}\times \Id$ is a Z-homeomorphism, we get that
$$ \left\{ \left( \piR^{R+}(x);f\circ\piR^{R+}(x) \right) \mid x\in X^{R+}(\R)\right\} = \piR^{R+}\times Id \left( \Gamma_{f\circ\piR^{R+}} \right)$$
is Z-closed and so that $\Gamma_f$ is a Z-closed subset of $X(\R)\times\A^1(\R)$.
\end{proof}

\begin{notation}
We will denote by $\overline{\Gamma_f}^{\C}$ the Zariski closure of $\Gamma_f$ in the set $X(\C)\times \A^1(\C)$.
\end{notation}

\begin{lemma}\label{LemR+Condition4}
Let $X$ be a real affine variety and $f \in \KR(X(\R))$. Then, for all $x\in X(\R)$, we have $$\overline{\Gamma_f}^{\C} \cap \left( \{x\}\times\A^1(\C) \right) = \Bigl\{\bigl(x;f(x)\bigr)\Bigr\}$$
\end{lemma}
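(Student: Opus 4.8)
The plan is to realize $f$ via an honest regular function on the normalization, push the graph of that function down to $X(\C)\times\A^1(\C)$ by the finite map $\piC'\times\Id$, and then read off the fiber over $x$. By Lemma \ref{LemR+Condition1et2} and the remark following Proposition \ref{PropConstanteSurLesFibres}, there is an element $g\in\R[X']$ lying above $f$ with the decisive property that for every $x\in X(\R)$ and every $z\in(\piC')^{-1}(x)$ one has $g_{_{\C}}(z)=f(x)\in\R$; that is, $g$ is constant, equal to $f(x)$, along the whole complex fiber over each real point. This fiberwise constancy is the geometric heart of the argument.

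First I would observe that, $g$ being regular on $X'$, its complex graph $\Gamma_{g_{_{\C}}}=\{(z;g_{_{\C}}(z))\mid z\in X'(\C)\}$ is Zariski closed in $X'(\C)\times\A^1(\C)$. The normalization morphism $\pi'$ is finite, hence so is $\pi'\times\Id$, and a finite morphism is Zariski closed on complex points; therefore $(\piC'\times\Id)(\Gamma_{g_{_{\C}}})$ is Zariski closed in $X(\C)\times\A^1(\C)$. Next I would check that $\Gamma_f$ is contained in this image: given $x\in X(\R)$, the fiber $(\piC')^{-1}(x)$ is nonempty because $\piC'$ is surjective, so picking any $z$ in it yields $(\piC'\times\Id)(z;g_{_{\C}}(z))=(x;f(x))$ by the constancy property. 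As $(\piC'\times\Id)(\Gamma_{g_{_{\C}}})$ is closed, passing to closures gives $\overline{\Gamma_f}^{\C}\subseteq(\piC'\times\Id)(\Gamma_{g_{_{\C}}})$.

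It then remains to intersect with the vertical line $\{x\}\times\A^1(\C)$. Using the fiberwise constancy once more,
$$(\piC'\times\Id)(\Gamma_{g_{_{\C}}})\cap\bigl(\{x\}\times\A^1(\C)\bigr)=\bigl\{(x;g_{_{\C}}(z))\mid z\in(\piC')^{-1}(x)\bigr\}=\bigl\{(x;f(x))\bigr\},$$
so the inclusion above forces $\overline{\Gamma_f}^{\C}\cap(\{x\}\times\A^1(\C))\subseteq\{(x;f(x))\}$. The reverse inclusion is immediate, since $(x;f(x))\in\Gamma_f\subseteq\overline{\Gamma_f}^{\C}$ already lies on that line, and equality follows.

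I expect the point to watch to be the step asserting that $(\piC'\times\Id)(\Gamma_{g_{_{\C}}})$ is Zariski closed, namely that finiteness of the normalization really does propagate to a closed map on complex points. Beyond that, the only delicate bookkeeping is between the real graph $\Gamma_f$ and its closure taken in the complex ambient space, and the fiberwise constancy of $g$ over the real points is exactly what reconciles the two.
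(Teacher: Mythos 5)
Your proof is correct, but it follows a genuinely different route from the paper's. The paper argues analytically: writing $f=p/q$, it uses that $\Dcal(q)$ is Zariski-dense, hence Euclidean-dense, in $\overline{\Gamma_f}^{\C}$, picks a sequence $(z_n,t_n)\to(x,t)$ inside $\Dcal(q)$, identifies $t_n$ with $f(z_n)$ via the inclusion $\overline{\Gamma_f}^{\C}\subset\Zcal(q_\C t-p_\C)$, and invokes the sequential continuity criterion of Proposition \ref{PropCritereSequentiel} to force $t=f(x)$. You instead stay entirely in the Zariski topology: you lift $f$ to the regular function $g\in\R[X']$ that Proposition \ref{PropConstanteSurLesFibres} (and the remark after its corollary) shows is constant equal to $f(x)$ on each complex fiber over a real point, push the closed graph $\Gamma_{g_\C}$ forward by the finite --- hence Zariski-closed on $\C$-points --- map $\piC'\times\Id$, and trap $\overline{\Gamma_f}^{\C}$ inside that closed image, whose fiber over $x$ is a singleton by construction. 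Your argument buys independence from the Euclidean topology and from Proposition \ref{PropCritereSequentiel} altogether, needing only Proposition \ref{PropConstanteSurLesFibres} and standard properties of finite morphisms, and it sidesteps the density bookkeeping on the possibly reducible set $\overline{\Gamma_f}^{\C}$ that the paper's sequence argument implicitly requires; the paper's version, in exchange, keeps the lemma aligned with the sequential characterization that organizes the rest of the section. The step you flagged is indeed the load-bearing one and it holds: $\R[X][t]\inj\R[X'][t]$ is a finite extension, and going-up makes the induced map on closed points Zariski-closed, with every closed point of the image of a closed set coming from a closed point upstairs.
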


\begin{proof}
Let $f \in \KR(X(\R))$. We can consider $p,q\in \R[X]$ such that $f=p/q$ if $q\neq 0$. Let $(x,t)\in \overline{\Gamma_f}^{\C} \cap \left( \{x\}\times\A^1(\C) \right)$ with $x\in X(\R)$ and consider the Z-dense Z-open set $\Dcal(q) \subset \overline{\Gamma_f}^{\C}$. Since it is also dense for the Euclidean topology, we can consider a sequence $(z_n,t_n)_n \in \Dcal(q)^{\mathbb{N}}$ such that $(z_n,t_n)\to (x,t)$. Moreover, we have $$\overline{\Gamma_f}^{\C} \subset \{q_{\C}t-p_{\C}\} \subset X(\C)\times\A^1(\C)$$
 and so we get, for all $n\in \mathbb{N}$, that $(z_n,t_n) = (z_n,p(z_n)/q(z_n))$. So, by Proposition \ref{PropCritereSequentiel}, we have $$(z_n,t_n) = (z_n,f(z_n)) \to (x,f(x))$$
This means that $\overline{\Gamma_f}^{\C} \cap \Bigl( \{x\}\times\A^1(\C) \Bigr) = \Bigl\{\bigl(x;f(x)\bigr)\Bigr\}$, which conclude the proof.
\end{proof}

We can now show that if an integral rational function on $X(\R)$ verifies Lemma \ref{LemR+Condition3} and \ref{LemR+Condition4}, then it is an element of $\KR(X(\R))$.

\begin{theorem}\label{TheoKRconditions}
Let $X$ be a real affine variety and let $f:X(\R)\to \R$. Then $f\in\KR(X(\R))$ if and only if it verifies the following properties :
\begin{center}
\begin{enumerate}
    \item[1)] $f\in \K(X)$
    \item[2)] There exists a monic polynomial $P(t)\in \R[X][t]$ such that $P(f)=0$ on $X(\R)$.
    \item[3)] The graph $\Gamma_f$ is Zariski closed in $X(\R)\times\mathbb{A}^1(\R)$.
    \item[4)] For all $x\in X(\R)$, we have $\overline{\Gamma_f}^{\C} \cap \left( \{x\}\times\A^1(\C) \right) = \Bigl\{(x;f(x))\Bigr\}$
\end{enumerate}
\end{center}
\end{theorem}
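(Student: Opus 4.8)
The plan is to prove Theorem \ref{TheoKRconditions} by showing each direction separately, exploiting the characterizations already established. The forward direction is essentially free: if $f\in\KR(X(\R))$, then Conditions 1) and 2) hold by Lemma \ref{LemR+Condition1et2}, Condition 3) holds by Lemma \ref{LemR+Condition3}, and Condition 4) holds by Lemma \ref{LemR+Condition4}. So the entire substance of the theorem lies in the converse, and this is where I would concentrate the argument.

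For the converse, I would assume that $f$ satisfies 1)--4) and aim to verify the sequential criterion of Proposition \ref{PropCritereSequentiel}, since that proposition already gives the equivalence between $\KR(X(\R))$-membership and Conditions 1, 2 together with the sequential continuity Condition 3 of that proposition. Conditions 1) and 2) here are literally Conditions 1 and 2 of Proposition \ref{PropCritereSequentiel}, so the task reduces to deducing the sequential continuity statement from the two graph conditions 3) and 4). Concretely, writing $f=p/q$ with $p,q\in\R[X]$, I would fix $x\in X(\R)$ and a sequence $(z_n)_n\in (\Dom(f)\cup X(\R))$ with $z_n\to x$, and show $f(z_n)\to f(x)$.

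The key step is to control accumulation points of the sequence $\bigl(z_n,f(z_n)\bigr)$ in $X(\C)\times\A^1(\C)$. Each point $(z_n,f(z_n))$ lies on $\Gamma_f$, hence in $\overline{\Gamma_f}^{\C}$, which is Z-closed and therefore Euclidean-closed. If $(f(z_n))_n$ had a convergent subsequence with limit $t$, then $(x,t)$ would be a Euclidean limit of points of $\overline{\Gamma_f}^{\C}$, hence $(x,t)\in\overline{\Gamma_f}^{\C}\cap(\{x\}\times\A^1(\C))$, which by Condition 4) forces $t=f(x)$. To upgrade this from ``every convergent subsequence has limit $f(x)$'' to ``$f(z_n)\to f(x)$'', I would argue that the relevant values $f(z_n)$ eventually lie in a compact set, using the properness of the closure: one takes a compact Euclidean ball $B\ni x$ in $X(\C)$, notes that $\overline{\Gamma_f}^{\C}\cap(B\times\A^1(\C))$ is compact since the projection to $X(\C)$ is finite (proper), so the second coordinates stay bounded; a bounded sequence all of whose convergent subsequences share the single limit $f(x)$ converges to $f(x)$. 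This is the same boundedness-plus-unique-limit mechanism used in the proof of Proposition \ref{PropCritereSequentiel}.

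\textbf{The main obstacle} I anticipate is ensuring that $\overline{\Gamma_f}^{\C}$ really is proper (finite-to-one) over $X(\C)$, which is what guarantees the compactness of the fibered piece and hence the boundedness of $\bigl(f(z_n)\bigr)_n$. This properness is exactly what Condition 2) is designed to provide: the monic relation $P(f)=0$ shows that $\overline{\Gamma_f}^{\C}$ is contained in the zero set of a polynomial that is monic in $t$, so each fiber $\{x\}\times\A^1(\C)$ meets it in at most $\deg P$ points and the projection is finite. I would make this the pivotal lemma-style observation before running the sequential argument. Once properness is in hand, Condition 4) pins the unique accumulation value and the convergence $f(z_n)\to f(x)$ follows, giving Condition 3 of Proposition \ref{PropCritereSequentiel} and therefore $f\in\KR(X(\R))$.
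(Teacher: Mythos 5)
Your forward direction coincides with the paper's (both simply invoke Lemmas \ref{LemR+Condition1et2}, \ref{LemR+Condition3} and \ref{LemR+Condition4}), but your converse takes a genuinely different route. The paper argues algebraically: it forms the ring $\R[Y]\simeq\R[X][t]/\ker\psi\simeq\R[X][f]$, which gives a finite birational morphism $\pi:Y\to X$; it then observes $Y(\C)=\Zcal_{\C}(\ker\psi)\subseteq\Zcal_{\C}(I_f)=\overline{\Gamma_f}^{\C}$ (this is where Condition 3) enters, to write $\Gamma_f=\Zcal(I_f)$), uses Condition 4) plus surjectivity of the finite map $\piC$ to get $\piC^{-1}(x)=\{(x;f(x))\}$ for every $x\in X(\R)$, and concludes via the universal property (Theorem \ref{TheoPUdelaRSN}) that $\R[Y]\inj\R[X^{R+}]$. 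You instead reduce to the sequential criterion of Proposition \ref{PropCritereSequentiel}, replacing the ring construction by a compactness argument: Condition 2) places $\overline{\Gamma_f}^{\C}$ inside the zero set of a $t$-monic polynomial, so it is proper with uniformly bounded fibers over compact subsets of $X(\C)$, and Condition 4) identifies the unique possible accumulation value of $\bigl(f(z_n)\bigr)_n$ as $f(x)$. Since Proposition \ref{PropCritereSequentiel} is proved earlier and independently, there is no circularity, and your deduction is arguably more economical; what the paper's route buys is an explicit intermediate variety $\Spec(\R[X][f])$ sitting between $X$ and $X^{R+}$, staying entirely within the universal-property formalism and avoiding topology.

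Two points to tighten. First, for a complex $z_n\in\Dom(f)$ the point $(z_n,f(z_n))$ does \emph{not} lie on $\Gamma_f$, which only sits over $X(\R)$; it lies on the graph of $p_{\C}/q_{\C}$ over $\Dcal(q_{\C})$, and you need the (easy) observation that this complex graph is contained in $\overline{\Gamma_f}^{\C}$ because $\Dcal(q)\cap X(\R)$ is Zariski dense in $\Dcal(q_{\C})$, so the real part of the graph is Zariski dense in the complex one. Second, your argument never uses Condition 3); this is harmless, since Condition 4) forces $\Gamma_f=\overline{\Gamma_f}^{\C}\cap\left(X(\R)\times\A^1(\R)\right)$ and hence already implies Condition 3), but it is worth stating explicitly so the reader does not suspect an omission.
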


\begin{proof}
Let $f\in\KR(X(\R))$. Then, by Lemma \ref{LemR+Condition1et2}, \ref{LemR+Condition3} and \ref{LemR+Condition4}, it verifies the four properties of the proposition. Conversely, suppose that $f : X(\R)\to \R$ verifies the four properties above. We consider the map
$$\begin{array}{cccc}
     \psi :&\R[X][t]&\to&\K(X)  \\
     &Q(t)& \mapsto & Q(f)
\end{array}$$
and write $\R[Y] \simeq \R[X][t]/\ker\psi \simeq \R[X][f]$ with $\pi:Y\to X$ the morphism induced by $\R[X]\inj \R[Y]$. We then have
$$\R[X] \inj \R[Y] \simeq \R[X][f] \subset \K(X)$$
So $\K(X)\simeq \K(Y)$ and $\pi$ is birational. Moreover $\R[Y]$ is a finite $\R[X]$-module because so is $\R[X][t]/<P(t)>$ and 
$$ \R[Y] \simeq \R[X][t]/\ker\psi \simeq ( \R[X][t]/<P(t)> ) / (\ker\psi / <P(t)> ) $$
Hence $\pi : Y \to X$ is a finite birational morphism. We want to show that the restriction $\widetilde{\pi} : \piC^{-1}(X(\R)) \to X(\R)$ of the morphism $\piC$ is bijective. By assumption, we can consider the real ideal $I_f := \mathcal{I}(\Gamma_f)$ and we have $\Gamma_f = \Zcal(I_f)$. One can see that $I_f \subset \ker\psi$ because $$\forall Q\in I_f\text{\hspace{0.2cm} }\forall x\in X(\R)\text{\hspace{0.3cm} }Q(x,f(x))=0$$
So $$Y(\C) = \Zcal_{\C}(\ker\psi) \subseteq \Zcal_{\C}(I_f) = \overline{\Gamma_f}^{\C} $$
Let $x\in X(\R)$. By the fourth condition, we have 
$$\piC^{-1}(x) = Y(\C) \cap \left( \{x\}\times\A^1(\C) \right) \subseteq \overline{\Gamma_f}^{\C} \cap \left( \{x\}\times\A^1(\C) \right) = \left\{(x;f(x))\right\}$$
and since $\piC : Y(\C) \to X(\C)$ is finite, then it is surjective and we get $\piC^{-1}(x) = \{(x;f(x))\}$. We have shown that $\pi$ is a finite birational morphism and $\widetilde{\pi}$ is bijective. From the universal property of the R-seminormalization, we get
$$ \R[X] \inj \R[Y] \inj \R[X^{R+}] $$
So $f\circ\piR^{R+}\in \R[X^{R+}]$.
\end{proof}

\begin{ex} 
Let $X = \Spec(\R[x,y]/<y^3-x^2y^2+yx^2(x+1)-x^4(x+1)>)$ and consider $$ f = \left\{\begin{array}{l}
    y/x \text{ if }x\neq 0 \\
    0 \text{ else} 
\end{array} \right.$$
We have that $f$ is a root of the polynomial $P_f(t) = t^3-xt^2+t(x+1)-x(x+1)$. Since $0$ is the only real root of $P_f$ when $x=0$, we get that $\Gamma_f$ is a Z-closed subset of $X(\R)\times \A^1(\R)$ given by $$\Gamma_f = \begin{cases}
y^3-x^2y^2+yx^2(x+1)-x^4(x+1) = 0\\
xt-y = 0\\
t^3-xt^2+t(x+1)-x(x+1) = 0
\end{cases}$$

But it doesn't verify the fourth condition because $\overline{\Gamma_f}^{\C} \cap \left( \{0\}\times\A^1(\C) \right) = \left\{(0;0) , (0;\pm i) \right\}$
\end{ex}

\section{Rational functions extending continuously on the complex points}\label{SectionChap3:RealKOcomplex}

In the case of complex varieties, it has been shown in \cite{Bernard2021} that the ring of polynomial functions on the seminormalization corresponds to the ring $\KO(X(\C))$ of rational functions that extend continuously on $X(\C)$. The purpose of this section is to show that, in the case of real varieties, the real valuated functions which are the restriction to $X(\R)$ of an element of $\KO(X(\C))$ correspond to the polynomial functions on the seminormalization. Those functions can also be seen as the elements $f$ in $\KO(X(\C))$ such that $\overline{f(\overline{z})} = f(z)$, for all $z\in X(\C)$. We end this section by giving a characterization of those functions with their graphs, as in Theorem 4.20 of \cite{Bernard2021} or like Theorem \ref{TheoKRconditions}. This will allow us to construct examples of different functions that become polynomials on $X^+$, $X^{R+}$, $X^{s_c}$ or $X^{w_c}$.

\begin{definition}
Let $X$ be a real affine variety, we define the set
$$ \KP(X(\R)) := \{ f : X(\R) \to \R \mid \exists g \in \KO(X(\C)) \text{ such that }g_{|X(\R)} = f \} $$
\end{definition}

\begin{rmq}
By Proposition \ref{PropCritereSequentiel}, it is clear that $\KP(X(\R)) \subset \KR(X(\R))$. Note also that those rings inject in $\K(X)$ and so, even if their elements are defined on all $X(\R)$, they are uniquely represented by the equivalent class of a rational representation.
\end{rmq}

Every real rational function can be extended as a complex rational function. The next lemma says that, for an element of $\KP(X(\R))$, this extension is a rational representation of an element of $\KO(X(\C))$.

\begin{lemma}\label{LemLOdonneRegComplexe}
Let $f:X(\R)\to \R$ and $p,q\in \R[X]$ be such that $f = p/q$ if $q\neq 0$. Then the following statements are equivalent
\begin{enumerate}
    \item The function $f$ belongs to $\KP(X(\R))$.
    \item The rational function $p_{\C}/q_{\C}$ extends continuously to an element $f_{\C}\in \KO(X(\C))$.
\end{enumerate}
\end{lemma}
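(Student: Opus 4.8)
The plan is to prove the two implications separately; the whole content lies in comparing a real rational representative with its complexification, and the two directions exploit, respectively, the conjugation symmetry of $p_\C/q_\C$ and the Zariski density of $X(\R)$ in $X(\C)$.

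For the implication $2 \Rightarrow 1$, I would start from the continuous extension $f_\C \in \KO(X(\C))$ of $p_\C/q_\C$ and first check that its restriction to $X(\R)$ is real-valued. Since $p$ and $q$ have real coefficients, one has $p_\C(\overline{z}) = \overline{p_\C(z)}$ and $q_\C(\overline{z}) = \overline{q_\C(z)}$, so $f_\C(\overline{z}) = \overline{f_\C(z)}$ on the dense open set $\Dcal(q_\C)$; by continuity of $f_\C$ and of complex conjugation this relation extends to all of $X(\C)$. Evaluating at $x \in X(\R)$, where $\overline{x} = x$, gives $f_\C(x) = \overline{f_\C(x)} \in \R$. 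As $f_\C$ agrees with $p/q = f$ on $\Dcal(q) \cap X(\R)$ and both are continuous, $f_\C|_{X(\R)}$ is exactly the continuous function represented by $p/q$, hence is identified with $f$; thus $f = f_\C|_{X(\R)}$ with $f_\C \in \KO(X(\C))$, which is precisely the statement $f \in \KP(X(\R))$.

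For the implication $1 \Rightarrow 2$, I would use density. By hypothesis there is some $g \in \KO(X(\C))$ with $g|_{X(\R)} = f$. On the set $\Dcal(q) \cap X(\R)$ both $g$ and $p_\C/q_\C$ equal $p/q$, so these two rational functions on $X(\C)$ agree there. Under the running assumption that the irreducible components of $X$ are real, $X(\R)$ is Zariski-dense in $X(\C)$, and $\Dcal(q) \cap X(\R)$ is a dense open subset of it, hence Zariski-dense in $X(\C)$. Two rational functions agreeing on a Zariski-dense set coincide as rational functions on $X(\C)$, so $g = p_\C/q_\C$. Since $g$ is continuous on $X(\C)$ by the definition of $\KO(X(\C))$, this says exactly that $p_\C/q_\C$ extends continuously, with $f_\C = g$, giving $2$.

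The step I expect to be the main obstacle is the bookkeeping at the indeterminacy locus $\{q = 0\} \cap X(\R)$ in the direction $2 \Rightarrow 1$: one must ensure that the values of $f$ there coincide with those of the continuous extension $f_\C$. This is where the remark preceding the lemma, identifying elements of $\KP(X(\R))$ and $\KR(X(\R))$ with their rational representatives, is used, together with the continuity and real-valuedness established above. Once $X(\R)$ is known to be Zariski-dense in $X(\C)$, the density argument in $1 \Rightarrow 2$ is then essentially formal.
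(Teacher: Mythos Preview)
Your proof is correct and follows essentially the same route as the paper: for $1\Rightarrow 2$ both you and the paper take a witness $g\in\KO(X(\C))$, compare it with $p_\C/q_\C$ on the Zariski-dense set $X(\R)$ (the paper writes this as the polynomial identity $p'q_\C=q'p_\C$), and conclude by continuity; for $2\Rightarrow 1$ the paper simply declares the implication ``clear'', invoking the remark that elements of $\KP(X(\R))$ are determined by their rational representative, whereas you spell out the real-valuedness of $f_\C|_{X(\R)}$ via conjugation symmetry and explicitly flag the bookkeeping at $\{q=0\}\cap X(\R)$. Your extra care there is warranted and matches what the paper leaves implicit.
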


\begin{proof}
Suppose that $f\in \KP(X(\R))$, then there exists $g = p'/q' \in \KO(X(\C))$ such that $g_{|X(\R)} = f$. Then we have $p'\qC = q'\pC$ on $X(\R)$. So $p'\qC = q'\pC$ on $X(\C)$ because $X(\R)$ is Z-dense in $X(\C)$. So we get $g=f_{\C}$ on $\Dcal(\qC) = \Dcal(q')$ and finally $g=f_{\C}$ on $X(\C)$ by continuity. Conversely, it is clear that if $f_{\C}\in \KO(X(\C))$, then $f = (f_{\C})_{\mid X(\R)} \in \KP(X(\R))$.
\end{proof}

By definition, we know that for each element $f\in \KP(X(\R))$, there is an extension $f_{\C}$ that belongs to $\KO(X(\C))$. One can ask the reverse question : what are the elements of $\KO(X(\C))$ which are the extension of an element of $\KP(X(\R))$ ? The answer is given by the next proposition.

\begin{notation}
Let $E\subset \mathcal{F}(X(\C);\C)$ and $f\in E$. We note $^{\sigma}f$ the function defined by $^{\sigma}f(z) = \overline{f(\overline{z})}$, for all $z\in X(\C)$. We will say that $f$ is $\sigma\textit{-invariant}$ if $^{\sigma}f = f$ and we note $^{\sigma}E$ the elements of $E$ which are $\sigma\textit{-invariant}$. 
\end{notation}

\begin{prop}
Let $X$ be a real algebraic variety. Then we have the following isomorphism 

$$\begin{array}{cccc}
    \psi : &^{\sigma}\KO(X(\C)) & \xrightarrow{\sim} & \KP(X(\R))\\
    &g & \to & g_{\mid X(\R)}
\end{array}$$
\end{prop}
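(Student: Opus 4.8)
The plan is to verify that $\psi$ is a well-defined ring homomorphism and then to establish injectivity and surjectivity separately. Well-definedness is the first thing I would check: if $g\in {}^{\sigma}\KO(X(\C))$ and $x\in X(\R)$, then $\overline{x}=x$, so $g(x) = {}^{\sigma}g(x) = \overline{g(\overline{x})} = \overline{g(x)}$, whence $g(x)\in\R$. Thus $g_{|X(\R)}$ is a genuine real-valued function on $X(\R)$, and since $g\in\KO(X(\C))$ is itself a continuous rational extension of this restriction, we have $g_{|X(\R)}\in\KP(X(\R))$ by the very definition of $\KP(X(\R))$. That $\psi$ is a ring homomorphism is immediate, restriction being compatible with sums and products.

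For injectivity, suppose $g\in {}^{\sigma}\KO(X(\C))$ satisfies $g_{|X(\R)}=0$. Writing $g = p_{\C}/q_{\C}$ as a rational function, the numerator vanishes on $X(\R)\cap\Dcal(q_{\C})$. Because $X$ is a real variety, $X(\R)$ is Zariski dense in $X(\C)$, and intersecting with the dense Zariski open set $\Dcal(q_{\C})$ keeps this set Zariski dense; hence $p_{\C}$ vanishes on a Zariski dense subset of $X(\C)$ and is therefore zero. So $g=0$ and $\psi$ is injective.

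For surjectivity I would use Lemma \ref{LemLOdonneRegComplexe}. Given $f\in\KP(X(\R))$, choose $p,q\in\R[X]$ with $f=p/q$ on $\Dcal(q)$. By that lemma, $p_{\C}/q_{\C}$ extends continuously to an element $f_{\C}\in\KO(X(\C))$. The key observation is that this canonical extension has real coefficients: since $p,q\in\R[X]$, applying $\sigma$ conjugates the coefficients and leaves $p_{\C}/q_{\C}$ unchanged, so ${}^{\sigma}f_{\C} = f_{\C}$, i.e. $f_{\C}\in {}^{\sigma}\KO(X(\C))$. As $(f_{\C})_{|X(\R)} = f$, we obtain $\psi(f_{\C})=f$, which proves surjectivity and, combined with the previous two steps, gives the claimed ring isomorphism.

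The only point requiring genuine care is the background interplay between the $\sigma$-action and continuity: to know that ${}^{\sigma}g$ is again an element of $\KO(X(\C))$ (used implicitly so that ${}^{\sigma}\KO(X(\C))$ is a well-defined subring) one uses that complex conjugation $z\mapsto\overline z$ preserves $X(\C)$ and is a homeomorphism for the Euclidean topology, precisely because $X$ is defined over $\R$; composing a continuous rational function with this involution and with conjugation on $\C$ again yields a continuous rational function. Granting this, surjectivity is the step where the real structure of $X$ is essential, since it is exactly the mechanism by which the real-coefficient extension lands in the $\sigma$-invariant subring.
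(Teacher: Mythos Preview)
Your argument is correct and follows essentially the same route as the paper: well-definedness via $\sigma$-invariance, surjectivity via Lemma~\ref{LemLOdonneRegComplexe}, and injectivity via Zariski density of $X(\R)$ in $X(\C)$. One small notational point: in the injectivity step you write $g = p_{\C}/q_{\C}$, which in the paper's conventions presupposes $p,q\in\R[X]$; a priori an element of $\KO(X(\C))$ is only represented by $p/q$ with $p,q\in\C[X]$, but your density argument works just as well with complex coefficients, and the paper proceeds similarly by clearing denominators and using that a polynomial identity on $X(\R)$ extends to $X(\C)$.
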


\begin{proof}
First of all, the morphism $\psi$ is well defined because if $g\in ^{\sigma}\KO(X(\C))$, then for all $x\in X(\R)$ we have $ g(x) = {}^{\sigma}g(x) = \overline{g(\overline{x})} = \overline{g(x)}$ and so $g(x) \in \R$.
The morphism $\psi$ is surjective because, by Lemma \ref{LemLOdonneRegComplexe}, if $f \in \KP(X(\R))$, then $f_{\C} \in \KO(X(\C))$ and it is clear that $f_{\C}$ is $\sigma$-invariant.
Let us see now that it is injective. Let $g_1,g_2 \in \KO(X(\C))$ be such that $g_{1_{\mid X(\R)}} = g_{2_{\mid X(\R)}}$. We can write $g_1 = p_1/q_1$ on $\Dcal(q_1)$ and $g_2 = p_2/q_2$ on $\Dcal(q_1)$ for $p_i,q_i\in\C[X]$. Then $q_1q_2g_1 = q_1q_2g_2$ on $X(\R)$, so $p_1q_2 = p_2q_1$ on $X(\R)$. Since $X(\R)$ is Zariski dense in $X(\C)$, the relation extends to $X(\C)$ and so $p_1/q_1 = p_2/q_2$ on $\Dcal(q_1q_2)$. Hence $g_1 = g_2$ by continuity.
\end{proof}

We obtain a real version of Theorem 4.13 of \cite{Bernard2021} saying that the polynomial functions on the seminormalization correspond to the rational functions of $X$ which extend continuously on $X(\C)$.

\begin{cor}
Let $X$ be an real algebraic variety. Then we have the following isomorphism

$$\begin{array}{cccc}
    \varphi : & \KP(X(\R)) & \xrightarrow{\sim} & \R[X^+]\\
    &f & \mapsto & f\circ\piR^+
\end{array}$$
\end{cor}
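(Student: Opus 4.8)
The plan is to deduce this corollary from the preceding proposition together with the complex result of \cite{Bernard2021}. Recall from the beginning of this section that \cite{Bernard2021} provides an isomorphism $\Phi : \KO(X(\C)) \xrightarrow{\sim} \C[X]^+ = \R[X^+]\otimes_{\R}\C$ sending a continuous rational function $g$ to the polynomial $g\circ\piC^+$, where $\piC^+ : X^+(\C)\to X(\C)$ is the complexified seminormalization morphism. The previous proposition already gives an isomorphism $\psi : {}^{\sigma}\KO(X(\C)) \xrightarrow{\sim} \KP(X(\R))$ by restriction to $X(\R)$. My strategy is to show that $\varphi$ is exactly the composite $\Phi\circ\psi^{-1}$, corestricted to the $\sigma$-invariant part, so that it is an isomorphism as a composition of isomorphisms.

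First I would check that $\Phi$ is equivariant for the conjugation action. Because $\pi^+$ is a morphism of real varieties, its complexification satisfies $\piC^+(\overline{w}) = \overline{\piC^+(w)}$ for all $w\in X^+(\C)$; hence for $g\in\KO(X(\C))$ one computes $\Phi({}^{\sigma}g) = {}^{\sigma}g\circ\piC^+ = {}^{\sigma}(g\circ\piC^+) = {}^{\sigma}\Phi(g)$, so $\Phi$ restricts to an isomorphism ${}^{\sigma}\KO(X(\C)) \xrightarrow{\sim} {}^{\sigma}\C[X]^+$. Next I would identify the target: using $\C[X]^+ = \R[X^+]\otimes_{\R}\C$, an element written $a+ib$ with $a,b\in\R[X^+]$ is $\sigma$-invariant if and only if $b=0$, whence ${}^{\sigma}\C[X]^+ = \R[X^+]$. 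Combining, $\Phi$ yields an isomorphism ${}^{\sigma}\KO(X(\C)) \xrightarrow{\sim} \R[X^+]$.

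It then remains to verify that the composite $\Phi\circ\psi^{-1} : \KP(X(\R)) \to \R[X^+]$ agrees with $\varphi$. Given $f\in\KP(X(\R))$, its preimage $f_{\C} := \psi^{-1}(f)$ is the unique $\sigma$-invariant element of $\KO(X(\C))$ with $(f_{\C})_{|X(\R)} = f$ (as in Lemma \ref{LemLOdonneRegComplexe}), and $\Phi(f_{\C}) = f_{\C}\circ\piC^+ \in \R[X^+]$. For $x'\in X^+(\R)$ one has $\piC^+(x') = \piR^+(x')\in X(\R)$, so $(f_{\C}\circ\piC^+)(x') = f_{\C}(\piR^+(x')) = f(\piR^+(x')) = (f\circ\piR^+)(x')$. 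Thus the polynomial $f_{\C}\circ\piC^+$ restricts on $X^+(\R)$ to $f\circ\piR^+$, and since $X^+(\R)$ is Zariski dense in $X^+(\C)$ an element of $\R[X^+]$ is determined by its values on $X^+(\R)$; hence $\Phi(f_{\C}) = f\circ\piR^+ = \varphi(f)$ inside $\R[X^+]$. Therefore $\varphi = \Phi\circ\psi^{-1}$ is an isomorphism.

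I expect the only delicate points to be the bookkeeping in the second step — the $\sigma$-equivariance of $\Phi$ and the identification ${}^{\sigma}\C[X]^+ = \R[X^+]$ through the real structure — since the genuinely substantive input, the isomorphism $\KO(X(\C))\simeq\C[X]^+$, is imported from \cite{Bernard2021}. Everything else is a matter of tracking restriction maps.
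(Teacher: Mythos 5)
Your proposal is correct and follows essentially the same route as the paper: both pass through the isomorphism $\KO(X(\C))\simeq\C[X^+]$ of \cite{Bernard2021}, check $\sigma$-equivariance of $g\mapsto g\circ\piC^+$ using $\piC^+(\overline{z})=\overline{\piC^+(z)}$, identify ${}^{\sigma}\C[X^+]$ with $\R[X^+]$, and compose with the restriction isomorphism ${}^{\sigma}\KO(X(\C))\simeq\KP(X(\R))$ from the preceding proposition. Your final verification that the composite equals $\varphi$ via restriction to $X^+(\R)$ and Zariski density is slightly more explicit than the paper's, but it is the same argument.
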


\begin{proof}
By \cite{Bernard2021}, Theorem 4.13, we have the following isomorphism 
$$\begin{array}{ccc}
    \KO(X(\C)) & \xrightarrow{\sim} & \C[X^+]\\
    f & \mapsto & f\circ\piC^+
\end{array}$$
Let us show that it induces an isomorphism on the $\sigma$-invariant elements
$$\begin{array}{ccc}
    {}^{\sigma}\KO(X(\C)) & \xrightarrow{\sim} & {}^{\sigma}\C[X^+]\\
    f & \mapsto & f\circ\piC^+
\end{array}$$
Let $f\in {}^{\sigma}\KO(X(\C))$ and $z\in X(\C)$, since ${}^{\sigma}\piC^+ = \piC^+$, we get  $\piC^+(\overline{z}) = \overline{{}^{\sigma}\piC^+(z)} =\overline{\piC^+(z)}$ and so 
$$\overline{f\circ\piC^+(\overline{z})} = \overline{f} (\piC^+(\overline{z})) = \overline{f}(\overline{\piC^+(z)}) = {}^{\sigma}f(\piC^+(z)) = f(\piC^+(z))$$
Since ${}^{\sigma}\C[X^+] \simeq \R[X^+]$, we deduce from the previous proposition that 
$$ \begin{array}{ccccccc}
    \KP(X(\R)) & \xrightarrow{\sim} & {}^{\sigma}\KO(X(\C)) & \xrightarrow{\sim} & {}^{\sigma}\C[X^+] & \xrightarrow{\sim} &\R[X^+] \\
    f & \mapsto & f_{\C} & \mapsto & f_{\C}\circ\piC^+ & \mapsto & (f_{\C}\circ\piC^+)_{\mid_{ X(\R)}}
\end{array}$$
One can see that $(f_{\C}\circ\piC^+)_{\mid_{ X(\R)}} = f\circ\piR^+$ so it concludes the proof.
\end{proof}

We now give a characterization of the elements of $\KP(X(\R))$ with their graphs. It can be seen as the real version of Theorem 4.20 of \cite{Bernard2021}.

\begin{lemma}\label{LemRelationEtendue}
Let $X$ be a real affine variety and $f \in \KP(X(\R))$. Let $P\in \R[X][t]$ such that $P(f) = 0$ on $X(\R)$, then $P(f_{\C}) = 0$ on $X(\C)$.
\end{lemma}

\begin{proof}
Let $p$ and $q$ be such that $f=p/q$ on $\Dcal(q)$ and let $P\in \C[X][t]$ be such that $P(f)=0$ on $X(\R)$. In particular $P(p/q)=0$ on $\Dcal(q)$. Then $q_{\C}^{\deg(P)}P(p_{\C}/q_{\C}) = 0$ on $X(\R)$. Since $X(\R)$ is Z-dense in $X(\C)$, we get $q_{\C}^{\deg(P)}P(p_{\C}/q_{\C}) = 0$ on $X(\C)$. Thus $P(f) = P(p_{\C}/q_{\C}) = 0$ on $\Dcal(q_{\C})$ and finally, by continuity, we get $P(f) = 0$ on $X(\C)$.
\end{proof}

\begin{theorem}\label{TheoKPconditions}
Let $X$ be a real affine variety and $f : X(\R) \to \R$, then $f\in \KP(X(\R))$ if and only if it verifies the following properties :
\begin{enumerate}
    \item[1)] $f\in \K(X)$
    \item[2)] There exists a monic polynomial $P\in\R[X][t]$ such that $P(f)=0$ on $X(\R)$.
    \item[3)] The graph $\Gamma_f$ is Zariski closed in $X(\R)\times\A^1(\R)$
    \item[4)] For all $z\in X(\C)$, we have $\#\left( \overline{\Gamma_f}^{\C} \cap \{z\}\times\A^1(\C) \right)= 1$
\end{enumerate}
\end{theorem}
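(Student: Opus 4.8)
The strategy is to recognize that the characterization of $\KP(X(\R))$ by conditions (1)--(4) closely parallels the characterization of $\KR(X(\R))$ given in Theorem \ref{TheoKRconditions}, the only difference being in condition (4): here we demand a single complex point in each fiber over every $z\in X(\C)$, whereas for $\KR(X(\R))$ we only demanded uniqueness over the real points $x\in X(\R)$. This reflects the geometric distinction between the seminormalization $X^+$ (gluing over all complex points) and the R-seminormalization $X^{R+}$ (gluing only over real points). The plan is therefore to mimic the proof of Theorem \ref{TheoKRconditions}, replacing the R-seminormalization by the seminormalization and the condition on real fibers by the condition on all complex fibers.

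\textbf{The forward direction.} First I would assume $f\in\KP(X(\R))$ and verify the four conditions. Condition (1) is immediate since $\KP(X(\R))\subset\KR(X(\R))\subset\K(X)$ by the remark following the definition of $\KP$. Condition (2) follows because $f$ is a restriction of $f_{\C}\in\KO(X(\C))$, which is integral over $\C[X]$; more directly, since $\KP(X(\R))\subset\KR(X(\R))$, Lemma \ref{LemR+Condition1et2} already gives integrality. Condition (3) follows from Lemma \ref{LemR+Condition3} for the same reason. For condition (4), the key point is to use the extension $f_{\C}\in\KO(X(\C))$: by Lemma \ref{LemRelationEtendue} the integral relation extends to all of $X(\C)$, and by Lemma \ref{LemLOdonneRegComplexe} the rational representation $p_{\C}/q_{\C}$ extends continuously to $f_{\C}$ on all of $X(\C)$. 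Then a sequential argument over the Euclidean-dense Zariski-open set $\Dcal(q_{\C})$, identical to the one in Lemma \ref{LemR+Condition4} but now applied at an arbitrary complex point $z$ rather than a real point, shows that any $(z,t)\in\overline{\Gamma_f}^{\C}\cap(\{z\}\times\A^1(\C))$ satisfies $t=f_{\C}(z)$, giving $\#\left(\overline{\Gamma_f}^{\C}\cap(\{z\}\times\A^1(\C))\right)=1$.

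\textbf{The converse direction.} Conversely, assuming the four conditions I would build the variety $Y$ exactly as in the proof of Theorem \ref{TheoKRconditions}: set $\R[Y]\simeq\R[X][f]$ via the evaluation map $\psi:\R[X][t]\to\K(X)$, $Q\mapsto Q(f)$. Conditions (1) and (2) make $\pi:Y\to X$ a finite birational morphism. The essential difference from Theorem \ref{TheoKRconditions} is that condition (4) now holds over \emph{every} complex point $z\in X(\C)$, not just over the real ones. I would use $I_f:=\mathcal{I}(\Gamma_f)\subset\ker\psi$, which yields $Y(\C)=\Zcal_{\C}(\ker\psi)\subseteq\Zcal_{\C}(I_f)=\overline{\Gamma_f}^{\C}$; then for every $z\in X(\C)$ the fiber satisfies $\piC^{-1}(z)\subseteq\overline{\Gamma_f}^{\C}\cap(\{z\}\times\A^1(\C))$, which has exactly one element by (4). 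Since $\piC$ is finite hence surjective, $\piC:Y(\C)\to X(\C)$ is bijective, which means $\pi$ is finite, birational and bijective on complex points --- exactly the defining condition for $\R[Y]\inj\R[X^+]$ by the universal property of the seminormalization. Hence $f\circ\piR^+\in\R[X^+]$, i.e. $f\in\KP(X(\R))$ by the corollary identifying $\KP(X(\R))$ with $\R[X^+]$.

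\textbf{Main obstacle.} The delicate point is the converse: I must be confident that ``$\piC$ bijective on all complex points'' is precisely the characterization of the seminormalization $X^+$, whereas for $X^{R+}$ we only needed bijectivity of the restriction to $\piC^{-1}(X(\R))$. This hinges on having the complex analog of Theorem \ref{TheoRSubEqBij} (the seminormalization criterion from \cite{Bernard2021}), which the paper invokes but does not restate in this excerpt; the cleanest route is to pass through the complexification and apply the result of \cite{Bernard2021} that $f_{\C}\in\KO(X(\C))$ becomes polynomial on $\C[X^+]=\R[X^+]\otimes_{\R}\C$, then descend to the $\sigma$-invariant part using the preceding proposition and corollary. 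Ensuring the $\sigma$-invariance is preserved throughout — so that the complex construction genuinely descends to a real variety — is the one place where care is needed.
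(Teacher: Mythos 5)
Your proposal is correct, but your converse takes a genuinely different route from the paper's. In the forward direction you and the paper agree on reducing conditions 1)--3) to Theorem \ref{TheoKRconditions} via $\KP(X(\R))\subset\KR(X(\R))$; for condition 4) the paper argues algebraically, proving $\overline{\Gamma_f}^{\C}=\Gamma_{f_{\C}}$ by combining the Zariski-closedness of $\Gamma_{f_{\C}}$ (Theorem 4.20 of \cite{Bernard2021}) with Lemma \ref{LemRelationEtendue}, whereas you rerun the sequential argument of Lemma \ref{LemR+Condition4} at an arbitrary complex point using the continuity of $f_{\C}$. Both work, but note that your argument a priori only gives $\#\left(\overline{\Gamma_f}^{\C}\cap\{z\}\times\A^1(\C)\right)\le 1$; non-emptiness of the fibre should be recorded, e.g. from the inclusion $\Gamma_{f_{\C}}\subset\overline{\Gamma_f}^{\C}$ supplied by Lemma \ref{LemRelationEtendue}, or from finiteness of $\overline{\Gamma_f}^{\C}\to X(\C)$. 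The real divergence is in the converse: the paper never touches $X^+$ there. It uses condition 4) to \emph{define} a function $g:X(\C)\to\C$ whose graph is exactly $\overline{\Gamma_f}^{\C}$, checks via the graph characterization of $\KO(X(\C))$ (Theorem 4.20 of \cite{Bernard2021}) that $g\in\KO(X(\C))$, and concludes $f=g_{|X(\R)}\in\KP(X(\R))$ directly from the definition of $\KP$. You instead build $Y=\Spec(\R[X][f])$, show $\piC:Y(\C)\to X(\C)$ is bijective, and pass through the universal property of the seminormalization together with the identification $\KP(X(\R))\simeq\R[X^+]$. Your route is sound, but it requires the complex analogue of Theorem \ref{TheoRSubEqBij} (a finite birational morphism bijective on $\C$-points is subintegral), which is only available through \cite{Bernard2021} and is not restated here, as you yourself flag; the paper's route buys a shorter proof that bypasses the seminormalization entirely, while yours has the virtue of running exactly parallel to the proof of Theorem \ref{TheoKRconditions}.
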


\begin{rmq}
If $f\in \KP(X(\R))$ then the last property says that $\overline{\Gamma_f}^{\C} = \Gamma_{f_{\C}}$.
\end{rmq}

\begin{proof}
Let $f\in \KP(X(\R))$. Then $f\in \KR(X(\R))$ and by Theorem \ref{TheoKRconditions} the function $f$ verifies condition 1, 2 and 3. By Lemma \ref{LemLOdonneRegComplexe}, we have $f_{\C} \in \KO(X(\C))$ and by \cite{Bernard2021} Theorem 4.20, we get that $\Gamma_{f_{\C}}$ is Z-closed. Since $\Gamma_f \subset \Gamma_{f_{\C}}$, we have $\overline{\Gamma_f}^{\C} \subset \Gamma_{f_{\C}}$. Moreover, by Lemma \ref{LemRelationEtendue}, we have that $f_{\C}$ is a complex root of every polynomial defining $\Gamma_f$. So if $I_f$ is a real ideal such that $\Zcal_{\R}(I_f) = \Gamma_f$, then $\Gamma_{f_{\C}} \subset \Zcal_{\C}(I_f) = \overline{\Gamma_f}^{\C}$. We obtain $\Gamma_{f_{\C}} = \overline{\Gamma_f}^{\C}$ and so $f$ verifies Condition 4).

Conversely, let $f : X(\R) \to \R$ be a function that verifies all the four conditions. Thanks to Condition 4), we can define a function $g:X(\C) \to \C$ such that 
$$ \forall z \in X(\C) \text{\hspace{0.2cm} } \Bigl\{ \bigl(z;g(z)\bigr) \Bigr\} = \overline{\Gamma_f}^{\C} \cap \{z\}\times\A^1(\C) $$
Then $\Gamma_g = \overline{\Gamma_f}^{\C}$ so $g$ is rational, integral over $\C[X]$ and $\Gamma_g$ is Z-closed. By \cite{Bernard2021}, Theorem 4.20, this mean that $g\in \KO(X(\C))$. Moreover $\Gamma_f \subset \Gamma_g$, so $g_{|X(\R)} = f$ and we get $f \in \KP(X(\R))$.
\end{proof}

Thanks to Theorems \ref{TheoKRconditions} and \ref{TheoKPconditions}, we give several examples of functions of $\KP(X(\R))$ and $\KR(X(\R))$. In particular, we show that the following inclusions are strict in general.
$$ \R[X] \subset \KP(X(\R)) \subset \KR(X(\R)) \subset \R[X]'_{\RO(Cent(X))} \subset \R[X]'_{\KO(Cent(X))} $$

\begin{ex}
We give an example of a function in $\KP(X(\R))\setminus \R[X]$. Let $X = \Spec(\R[x,y]/<y^2-x^3>)$ and consider the function $$ f = \left\{\begin{array}{l}
    y/x \text{ if }x\neq 0 \\
    0 \text{ else} 
\end{array} \right.$$
We have that $f$ is a root of the polynomial $P(t) = t^2-x$. Since $0$ is the only complex root of $P$ when $x=0$, we get that $\Gamma_f$ is Z-closed in $X(\R)\times \A^1(\R)$ and that $\Gamma_{f_{\C}}$ is Z-closed in $X(\C)\times \A^1(\C)$. So by Theorem \ref{TheoKPconditions}, we have $f\in \KP(X(\R))$.

\end{ex}

\begin{ex}
We give an example of a function in $\KR(X(\R))\setminus \KP(X(\R))$. Consider $X = \Spec(\R[x,y]/<y^2-x^3(x^2+1)^2>)$ and the function $$ f = \left\{\begin{array}{l}
    y/x(x^2+1) \text{ if }x\neq 0 \\
    0 \text{ else} 
\end{array} \right.$$

We have that $f$ is a root of the polynomial $P(t) = t^2-x$. Since $0$ is the only complex root of $P$ when $x=0$, we get that $\Gamma_f$ is Z-closed in $X(\R)\times \A^1(\R)$ and that, for all $x\in X(\R)$, we have $\overline{\Gamma_f}^{\C} \cap \left( \{x\}\times\A^1(\C) \right) = \left\{(x;f(x))\right\}$.
So by Theorem \ref{TheoKRconditions}, we get $f\in \KR(X(\R))$. However, if $x=\pm i$, then $P(t)$ as two distinct complex roots. So $\overline{\Gamma_f}^{\C}$ is not the graph of $f_{\C}$ extended by continuity.\\
\end{ex}

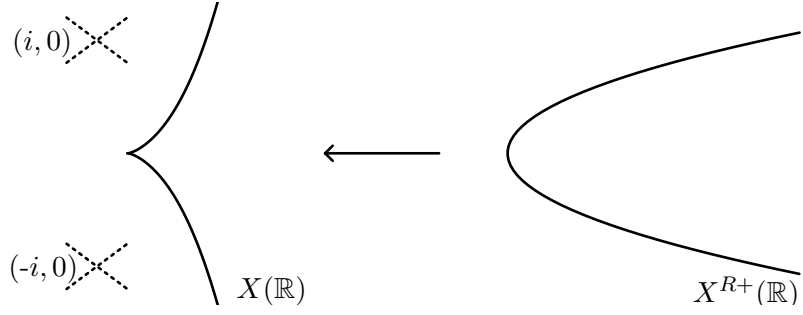
\begin{figure}[h]
    \centering
    \begin{tikzpicture}[line cap=round,line join=round,>=triangle 45,x=1cm,y=1cm]
\clip(-2,-2) rectangle (9,2);
\draw [samples=50,rotate around={-90:(5,0)},xshift=5cm,yshift=0cm,line width=1pt,domain=-1.6:1.6)] plot (\x,{(\x)^2*1.5});
\draw [line width=1pt] (2.6,0)-- (4.1,0);
\draw [line width=1pt] (2.6,0)-- (2.7,0.1);
\draw [line width=1pt] (2.6,0)-- (2.7,-0.1);
\draw (7.3,-1.5) node[anchor=north west] {$X^{R+}(\R)$};
\draw (1.3,-1.5) node[anchor=north west] {$X(\R)$};
\draw (-1.65,1.8) node[anchor=north west] {$(i,0)$};
\draw (-1.7,-1.2) node[anchor=north west] {$(\text{-}i,0)$};

\draw[line width=1pt,smooth,samples=100,domain=0:1.2] plot(\x,{sqrt((\x)^(3)*((\x)^(2)+1))});
\draw[line width=1pt,smooth,samples=100,domain=0:1.2] plot(\x,{0-sqrt((\x)^(3)*((\x)^(2)+1))});

\draw [line width=1pt,dash pattern=on 1pt off 2pt] (-0.8,1.2)--(0,1.8);
\draw [line width=1pt,dash pattern=on 1pt off 2pt] (-0.8,1.8)--(0,1.2);
\draw [line width=1pt,dash pattern=on 1pt off 2pt] (-0.8,-1.2)--(0,-1.8);
\draw [line width=1pt,dash pattern=on 1pt off 2pt] (-0.8,-1.8)--(0,-1.2);
\end{tikzpicture}
    \caption{The R-seminormalization of $\{y^2-x^3(x^2+1)^2=0\}$.}
    \label{fig:ExempleRSN}
\end{figure}

\begin{ex}
We give an example of a function in $\R[X]'_{\RO(Cent(X))}\setminus \KR(X(\R))$. Consider the central variety $X = \Spec(\R[x,y]/<y^4-x(x^2+y^2)>)$ and the function $$ f = \left\{\begin{array}{l}
    y^2/x \text{ if }x\neq 0 \\
    0 \text{ else} 
\end{array} \right.$$
which is a root of the polynomial $P_{(x,y)}(t) = t^2-t-x$. Moreover $f$ is continuous because ?? so $f\in \R[X]'_{\KO(X(\R))}$. Since $X$ is a curve, we have $\KO(X(\R)) = \RO(X(\R))$ and so $f \in \R[X]'_{\RO(X(\R))}$. However, since $P$ has two roots if $x=0$, then the graph $\Gamma_f \varsubsetneq \Zcal(<y^4-x(x^2+y^2);xt-y^2;t^2-t-x>)$ is not Z-closed. By Lemma \ref{LemR+Condition4}, we get that $f \notin \KR(X(\R))$.\\
\end{ex}

\begin{ex}
We give an example of a function in $\R[X]'_{\KO(Cent(X))}\setminus \R[X]'_{\RO(Cent(X))}$. Consider the variety $X = \Spec(\R[x,y]/<x^3-y^3(1+z^2)>)$ and the function $$ f = \left\{\begin{array}{cl}
    x/y & \text{ if }y\neq 0 \\
    ^3\sqrt{1+z^2} & \text{ else} 
\end{array} \right.$$
We have that $f$ is continuous, rational and is a root of the polynomial $P(t) = t^3-1-z^2$. But it is not in $\RO(\Cent(X))$.\\
\end{ex}

To conclude, we summarize this section with the following diagram
$$\xymatrix{
    \R[X] \ar@{^{(}->}[r] & \R[X^+] \ar@{^{(}->}[r] & \R[X^{R+}] \ar@{^{(}->}[r] & \R[X^{s_c}] \ar@{^{(}->}[r] & \R[X^{w_c}] \ar@{^{(}->}[r] & \R[X']\\
    \R[X] \ar@{^{(}->}[r] \ar@{->}[u]^{\simeq} & \KP(X(\R)) \ar@{^{(}->}[r] \ar@{->}[u]^{\simeq} & \KR(X(\R)) \ar@{^{(}->}[r] \ar@{->}[u]^{\simeq} & \R[X]'_{\RO(Cent(X))} \ar@{^{(}->}[r] \ar@{->}[u]^{\simeq} & \R[X]'_{\KO(Cent(X))} \ar@{^{(}->}[r] \ar@{->}[u]^{\simeq} & \R[X]'_{\K(X)} \ar@{->}[u]^{\simeq}
}$$

\section{Biregular normalization and R-seminormalization}\label{SectionLinkR-SNandBirNorm}

Let $A$ be a ring, we say that $A$ satisfies the condition \textbf{(mp)} if $A$ is reduced and has a finite number of minimal primes which are all real ideals. Note that the ring of polynomial functions on an algebraic set always verifies condition \textbf{(mp)}. Moreover, by \cite{IntClosure} Lemma 2.8, if $A$ satisfies \textbf{(mp)} and $B$ is such that $A\inj B\inj A'$, then $B$ satisfy \textbf{(mp)}. For such a ring, we have an injection $$A\inj A/\p_1 \times ... \times A/\p_n \inj \kappa(\p_1)\times ... \times \kappa(\p_n) \simeq \K(A)$$ where the $\p_i$ are the minimal primes of $A$. Remark that the $A/\p_i$ are real rings and so $A$ is also a real ring.\\

In the paper \cite{IntClosure}, the authors introduced the biregular normalization $A^b$ of a ring $A$ which is defined in the following way. Let $A$ be a ring satisfying condition \textbf{(mp)} and let $\mathcal{T}(A)$ be the multiplicative part $1+\sum A^2$ of $A$ which does not contain any zero divisors since $A$ is real. Then we can consider $\Ocal(A) := \mathcal{T}(A)^{-1}A$ and the biregular normalization $A^b$ is define as the integral closure of $A$ in $\Ocal(A)$. This ring can also be defined as the biggest biregular subextension of $A\inj A'$. We recall here Proposition 4.13 from \cite{IntClosure} :

\begin{rappelprop}
Let $A$ be a ring which verifies \textbf{(mp)} and let $B$ such that $A\inj B \inj A'$. Then the following statements are equivalent :
\begin{enumerate}
    \item The extension $A\inj B$ is biregular.
    \item For all $\m \in \Rmax(A)$, there exists a unique $\m' \in \max(B)$ such that $\m' \cap A = \m$. Moreover, the morphism $A_{\m} \to B_{\m'}$ is an isomorphism.
    \item For all $\p \in \Rspec(A)$, there exists a unique $\q \in \Spec(B)$ such that $\q \cap A = \p$. Moreover, the morphism $A_{\p} \to B_{\q}$ is an isomorphism.
\end{enumerate}
\end{rappelprop}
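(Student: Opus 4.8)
The plan is to pass everything through the localization $\Ocal(A) = \mathcal{T}(A)^{-1}A$ and to exploit the fact that, unravelling the definition of $A^b$ as the integral closure of $A$ in $\Ocal(A)$, the extension $A \inj B$ is biregular exactly when $B \subseteq \Ocal(A)$ inside the total ring of fractions $\K(A)$ (recall $B$ is already integral over $A$ since $B \subseteq A'$). The cornerstone is the identification of the primes surviving in this localization: for $\p \in \Spec(A)$ one has $\p \cap \mathcal{T}(A) = \varnothing$ if and only if $\p$ is real. Indeed an element $1 + \sum a_i^2$ of $\mathcal{T}(A)$ is itself a sum of squares, hence cannot lie in a real prime; conversely, if $\p$ is not real then $-1$ is a sum of squares in $\kappa(\p)$, hence already in $A/\p$ (a domain is formally real iff its fraction field is), producing an element of $\mathcal{T}(A)$ inside $\p$. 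Consequently $\Spec(\Ocal(A)) \simeq \Rspec(A)$, and the whole statement reduces to controlling, for $\p\in\Rspec(A)$, whether $B_\p := (A\setminus\p)^{-1}B$ equals $A_\p$.

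First I would prove 1) $\Rightarrow$ 3). Fix $\p \in \Rspec(A)$. Since $\mathcal{T}(A) \subseteq A \setminus \p$, localizing the chain $A \inj B \subseteq \Ocal(A)$ at $\p$ gives $A_\p \subseteq B_\p \subseteq (\Ocal(A))_\p = A_\p$, whence $B_\p = A_\p$. As $A_\p$ is local, the single prime of $B_\p$ over $\p A_\p$ pulls back to a unique $\q \in \Spec(B)$ lying over $\p$, and $B_\q = (B_\p)_{\q B_\p} = A_\p$, so $A_\p \to B_\q$ is an isomorphism. The implication 3) $\Rightarrow$ 2) is then immediate: a real maximal ideal is in particular a real prime, and by the going-up property (Proposition \ref{LemGoingUp}) the unique prime of $B$ lying over it is maximal.

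For the converse I would establish 3) $\Rightarrow$ 1) by a denominator-ideal argument. Given $b \in B$, set $\mathfrak{c} = \{a \in A \mid ab \in A\}$. Suppose $\mathfrak{c} \cap \mathcal{T}(A) = \varnothing$; then there is a real prime $\p \supseteq \mathfrak{c}$. Because the extension is integral and 3) provides a unique $\q$ over $\p$, incomparability forces $B_\p$ to be local with maximal ideal $\q B_\p$, so $B_\p = B_\q \simeq A_\p$. Thus $b/1 = a/s$ in $B_\p$ with $s \in A\setminus\p$, and the definition of this localization yields $s' \in A\setminus\p$ with $s's\,b = s'a \in A$; hence $s's \in \mathfrak{c}\setminus\p$, a contradiction. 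Therefore $\mathfrak{c}$ meets $\mathcal{T}(A)$, so $b \in \Ocal(A)$, and $A \inj B$ is biregular. At this point $1) \iff 3)$ and $3) \Rightarrow 2)$ are in hand.

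It remains to climb from the maximal-ideal statement 2) back to 1), and this is where I expect the real difficulty to lie, the localization computations above being essentially formal. I would rerun the denominator-ideal argument, but now force the offending prime to be a real maximal ideal: if $\mathfrak{c}\cap\mathcal{T}(A)=\varnothing$ then $\mathfrak{c}\,\Ocal(A)$ sits in some maximal ideal of $\Ocal(A)$, corresponding to a real prime $\p$ that is maximal among real primes. The point one genuinely needs is that such a $\p$ is a real maximal ideal of $A$, so that hypothesis 2) applies and gives $A_\p\simeq B_{\p'}$, hence $B_\p=A_\p$ and the same contradiction $s's\in\mathfrak{c}\setminus\p$. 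For coordinate rings this is exactly the real Nullstellensatz: a real prime is the ideal of a real subvariety whose real points are Zariski dense, and one maximal among real primes is the ideal of a single real point, i.e. an element of $\Rmax(A)$. This is the step where the hypothesis \textbf{(mp)} and the geometric nature of $A$ are really used, and it closes the cycle $1)\iff 2)\iff 3)$.
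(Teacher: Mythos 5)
First, note that the paper does not prove this statement at all: it is recalled verbatim from \cite{IntClosure} (Proposition 4.13 there), so your argument can only be judged on its own merits, and on those merits it contains one genuine gap, located exactly where you put all the weight. The ``cornerstone'' identification of the primes surviving in $\Ocal(A)=\mathcal{T}(A)^{-1}A$ with the real primes of $A$ is false. The correct equivalence is: $\p\cap\mathcal{T}(A)\neq\varnothing$ if and only if $-1$ is a sum of squares in $A/\p$, and this is strictly stronger than $\p$ failing to be real. Concretely, take $A=\R[x,y]$ and $\p=(x^2+y^2)$: this is a prime ideal ($x^2+y^2$ is irreducible over $\R$) which is not real ($x^2+y^2\in\p$ but $x\notin\p$), yet every element of $1+\sum A^2$ takes a value $\geq 1$ at the origin while every element of $\p$ vanishes there, so $\p\cap\mathcal{T}(A)=\varnothing$. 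Your parenthetical ``a domain is formally real iff its fraction field is'' conflates two conditions: it is true that $A/\p$ is a real \emph{ring} iff $\kappa(\p)$ is formally real, but ``$-1$ is a sum of squares in $\kappa(\p)$'' does not descend to ``$-1$ is a sum of squares in $A/\p$'' (in the example $-1=(x/y)^2$ in $\kappa(\p)$, while $-1$ is visibly not a sum of squares in $\R[x,y]/(x^2+y^2)$). Hence $\Spec(\Ocal(A))\not\simeq\Rspec(A)$ in general.

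What survives is the implication chain $1)\Rightarrow 3)\Rightarrow 2)$, since there you only use the correct direction ``$\p$ real $\Rightarrow\mathcal{T}(A)\subseteq A\setminus\p$'' (granting your reading of ``biregular'' as $B\subseteq\Ocal(A)$, which matches the characterization of $A^b$ but is not literally the definition in \cite{IntClosure}). What breaks is both converses: in $3)\Rightarrow 1)$ and $2)\Rightarrow 1)$ you produce, from $\mathfrak{c}\cap\mathcal{T}(A)=\varnothing$, a \emph{real} prime (resp.\ a real maximal ideal) containing the denominator ideal $\mathfrak{c}$. General localization only gives a prime containing $\mathfrak{c}$ and disjoint from $\mathcal{T}(A)$, and by the counterexample such a prime need not be real. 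The fact you need is nevertheless true, but it is a theorem, not a formality: by the abstract real Nullstellensatz the real radical of an ideal $I$ is the intersection of the real primes containing $I$ and equals $A$ exactly when $I\cap\mathcal{T}(A)\neq\varnothing$; and for an affine ring the geometric real Nullstellensatz then yields a real point of $\Zcal_{\R}(\mathfrak{c})$, i.e.\ a real \emph{maximal} ideal containing $\mathfrak{c}$, which is what $2)\Rightarrow 1)$ requires. With that substitution your denominator-ideal argument closes the cycle; without it, the proof as written does not go through.
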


We will say that a ring $A$ is a real affine ring if it is the coordinate ring of a real affine variety. In this case it satisfies the condition \textbf{(mp)} and we have shown Corollary \ref{CorSemiMaxEstSemi} that $A^{R+_{\max}} = A^{R+}$. Moreover, we also have $A^{+_{\max}} = A^+$ by \cite{Bernard2021} Corollary 3.7.

The goal of this section is to compare the notion of biregular normalization, seminormalization and R-seminormalization. We start by giving two lemmas that will lead us to the first comparison.

\begin{lemma}
Let $A\inj B$ be an integral extension of rings with $A$ that satisfy \textbf{(mp)}. If $A\inj B$ is biregular, then $A\inj B$ is $\Rtir$subintegral.
\end{lemma}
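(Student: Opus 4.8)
The plan is to read off everything from the recalled Proposition 4.13 of \cite{IntClosure}. Since a biregular extension is by definition a subextension of $A \inj A'$, the hypothesis gives $A \inj B \inj A'$, so that proposition applies; I will use its third characterization, which describes biregularity in terms of real \emph{primes} rather than real maximal ideals. The goal is then simply to match the data it produces against the two clauses of $\Rtir$subintegrality in Definition \ref{DefFSubintegral}.

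First I would fix $\p \in \Rspec(A)$ and invoke condition (3) of Proposition 4.13: it furnishes a unique $\q \in \Spec(B)$ with $\q \cap A = \p$, together with an isomorphism of local rings $A_\p \xrightarrow{\sim} B_\q$. The existence and uniqueness of $\q$ is exactly clause (1) of Definition \ref{DefFSubintegral}, so that half requires no further work. For clause (2), the local isomorphism $A_\p \xrightarrow{\sim} B_\q$ sends the maximal ideal to the maximal ideal and therefore descends to an isomorphism of residue fields $\kappa(\p) \simeq \kappa(\q)$; this is immediate and supplies the equiresiduality part.

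The one point deserving care is showing $\q \in \Rspec(B)$, and this is where the specific choice $\F = \Rspec$ is used. I would appeal to the principle recalled in the remark after Definition \ref{DefFSubintegral}: a prime ideal is real precisely when its residue field is a real field. Since $\p$ is real, $\kappa(\p)$ is a real field, and transporting this across the isomorphism $\kappa(\p) \simeq \kappa(\q)$ shows $\kappa(\q)$ is a real field as well, hence $\q$ is real. With both clauses of Definition \ref{DefFSubintegral} verified, $A \inj B$ is $\Rtir$subintegral.

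There is no genuine obstacle here: all the arithmetic content is already encapsulated in Proposition 4.13, and the only manual steps are the passage from a local ring isomorphism to a residue-field isomorphism together with the real-ideal/real-field dictionary. The subtlety worth flagging is that biregularity is a stronger, "local isomorphism" condition, so it automatically implies the weaker equiresidual condition defining $\Rtir$subintegrality; the converse will fail, which is presumably what motivates the subsequent lemmas comparing $A^b$ with $A^{R+}$.
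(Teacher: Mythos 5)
Your proof is correct and follows the same route as the paper: both invoke condition (3) of Proposition 4.13 of \cite{IntClosure} to get the unique $\q$ with $A_\p \simeq B_\q$, and descend the local isomorphism to $\kappa(\p)\simeq\kappa(\q)$. The only difference is that you explicitly spell out the step $\kappa(\q)$ real $\Rightarrow$ $\q\in\Rspec(B)$ via the real-ideal/real-field dictionary, which the paper leaves implicit (it is the content of the remark after Definition \ref{DefFSubintegral}).
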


\begin{proof}
Let $\p\in\Rspec(A)$, then by \cite{IntClosure} Proposition 4.13 , there exists $\q\in\Spec(B)$ such that $\q\cap A = \p$ and $A_{\p}\simeq B_{\q}$. So we get 
$$\kappa(\p) \simeq A_{\p} / \p A_{\p} \simeq B_{\q}/\q B_{\q} \simeq \kappa(\q)$$
and finally $A\inj B$ is $\Rtir$subintegral.
\end{proof}

\begin{rmq}
As a consequence, we get $A^b \subset A^{R+}$. More precisely, we have the following commutative diagram
$$\xymatrix{
    & A^{+_{\max}} \ar@{^{(}->}[r] & A^{R+_{\max}} \ar@{^{(}->}[r] & A'\\
    A \ar@{^{(}->}[r] \ar@{_{(}->}[rd] & A^+ \ar@{^{(}->}[r] \ar@{->}[u]^{\simeq} & A^{R+} \ar@{->}[u]^{\simeq}&\\
    & A^b \ar@{_{(}->}[ru] &&\\
}$$
\end{rmq}

\begin{lemma}
Let $A\inj B$ be an integral extension of rings and let $A\inj C\inj A^{\F}_B$ be a subextension of the $\F$-seminormalization of $A$ in $B$. Then
$$ A^{\F}_B = C^{\F}_B $$
\end{lemma}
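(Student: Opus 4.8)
The plan is to prove the equality of the two rings $A^{\F}_B$ and $C^{\F}_B$ by establishing the two inclusions separately, and in each case to deduce the inclusion purely formally from the universal property (Proposition \ref{PropPU}) together with the transitivity of $\F$-subintegrality (Lemma \ref{LemTransSub}). No explicit manipulation of localizations $b_\p \in A_\p + \Rrm(B_\p)$ should be needed: everything reduces to identifying, for each relevant chain, which intermediate extension is $\F$-subintegral. Throughout I note that all the rings in play sit inside $B$ and are integral over $A$, so that the chains $A\inj C^{\F}_B\inj B$ and $C\inj A^{\F}_B\inj B$ are integral extensions and the cited results apply.

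For the inclusion $C^{\F}_B \subseteq A^{\F}_B$, I would first observe that $A\inj C$ is $\F$-subintegral: since by hypothesis $C$ lies in $A^{\F}_B$, the chain $A\inj C\inj B$ together with Proposition \ref{PropPU} (the implication from statement 2 to statement 1) yields that $A\inj C$ is $\F$-subintegral. On the other hand, Proposition \ref{PropSnSub} gives that $C\inj C^{\F}_B$ is $\F$-subintegral. Applying Lemma \ref{LemTransSub} (the implication from 2 to 1) to the chain $A\inj C\inj C^{\F}_B$, I conclude that $A\inj C^{\F}_B$ is $\F$-subintegral. Then, feeding the chain $A\inj C^{\F}_B\inj B$ into Proposition \ref{PropPU} (now the implication from 1 to 2), the image of $C^{\F}_B$ is a subring of $A^{\F}_B$, which is exactly the desired inclusion.

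For the reverse inclusion $A^{\F}_B \subseteq C^{\F}_B$, I would start from the fact that $A\inj A^{\F}_B$ is $\F$-subintegral (Proposition \ref{PropSnSub}) and consider the chain $A\inj C\inj A^{\F}_B$. By Lemma \ref{LemTransSub} (the implication from 1 to 2), both $A\inj C$ and, crucially, $C\inj A^{\F}_B$ are $\F$-subintegral. I then apply Proposition \ref{PropPU} to the chain $C\inj A^{\F}_B\inj B$, with $C$ now playing the role of the base ring: since $C\inj A^{\F}_B$ is $\F$-subintegral, the image of $A^{\F}_B$ is a subring of $C^{\F}_B$, giving $A^{\F}_B\subseteq C^{\F}_B$. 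Combining the two inclusions yields $A^{\F}_B = C^{\F}_B$.

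The argument is essentially a bookkeeping of which ring serves as the base in each application of Proposition \ref{PropPU}, so the only real point requiring care, and the step I expect to be the main potential pitfall, is ensuring that the transitivity lemma is invoked in the correct direction each time and that the base ring in the universal property is correctly reinterpreted (first $A$, then $C$). Once the $\F$-subintegrality of the intermediate extensions $A\inj C^{\F}_B$ and $C\inj A^{\F}_B$ is secured, both inclusions are immediate from the universal property.
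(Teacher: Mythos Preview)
Your proof is correct and follows essentially the same approach as the paper: both arguments establish the two inclusions by repeatedly applying the universal property (Proposition \ref{PropPU}) together with Proposition \ref{PropSnSub} and the transitivity of $\F$-subintegrality (Lemma \ref{LemTransSub}). The only cosmetic difference is the order in which the inclusions are treated and the precise chain used to see that $A\inj C^{\F}_B$ is $\F$-subintegral (you compose $A\inj C$ with $C\inj C^{\F}_B$, whereas the paper first obtains $A^{\F}_B\inj C^{\F}_B$ and composes three steps), but the substance is identical.
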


\begin{proof}
By Proposition \ref{PropSnSub}, the extension $C\inj A^{\F}_B$ is $\Ftir$subintegral. So, by the universal property of $C^{\F}_B$, we get $$A\inj C\inj A^{\F}_B\inj C^{\F}_B\inj B$$
and the first three extensions are $\Ftir$subintegral. This implies that $A\inj C^{\F}_B$ is $\Ftir$subintegral and so, by the universal property of $C^{\F}_B$, we obtain $C^{\F}_B\inj A^{\F}_B$.
\end{proof}

By applying this lemma to the subextensions of the previous diagram, we obtain the first comparisons between the considered notions. 

\begin{prop}\label{PropPremiereComparaisons}
Let $A$ be a real affine ring. Then 
\begin{enumerate}
    \item $(A^b)^{R+} = (A^{R+})^b = A^{R+}$
    \item $(A^+)^{R+} = (A^{R+})^+ = A^{R+}$
    \item $(A^b)^+ \subset A^{R+}$
    \item $(A^+)^b \subset A^{R+} $
\end{enumerate}
\end{prop}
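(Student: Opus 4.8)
The plan is to reduce all four statements to two ingredients established earlier in this section: the final Lemma (asserting $A^{\F}_B = C^{\F}_B$ whenever $A\inj C\inj A^{\F}_B$), and the combination of transitivity of $\Rtir$subintegrality (Lemma \ref{LemTransSub}) with the universal property of the $\Rtir$seminormalization (Proposition \ref{PropPU}). I would work throughout inside a single normalization: since $A$ satisfies \textbf{(mp)} and every ring $B$ with $A\inj B\inj A'$ again satisfies \textbf{(mp)} and has $A'$ as its own normalization, the rings $A^+$, $A^b$ and $A^{R+}$ are all subextensions of $A\inj A'$, and we may use the chains $A\inj A^+\inj A^{R+}\inj A'$ and $A\inj A^b\inj A^{R+}$ recorded in the diagram above, together with $A^{R+}=A^{R+}_{A'}$.

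First I would pin down the two implications between subintegrality notions that drive the argument. By the first lemma of this section, a biregular extension is $\Rtir$subintegral, so $A\inj A^b$ and $A^{R+}\inj (A^{R+})^b$ are $\Rtir$subintegral. Moreover a subintegral (i.e. $\Spec$-subintegral) extension is also $\Rtir$subintegral: restricting the subintegrality condition to a real prime $\p$ produces a unique $\q$ over $\p$ with $\kappa(\p)\simeq\kappa(\q)$, and since an ideal is real precisely when its residue field is real (remark after Definition \ref{DefFSubintegral}), this forces $\q\in\Rspec$. Hence $A\inj A^+$ and $A^{R+}\inj (A^{R+})^+$ are $\Rtir$subintegral as well.

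For the ``inner'' equalities $(A^b)^{R+}=A^{R+}$ and $(A^+)^{R+}=A^{R+}$, I would invoke the final Lemma with $\F=\Rspec$, $B=A'$, and $C$ equal to $A^b$ or $A^+$: from $A\inj C\inj A^{R+}=A^{R+}_{A'}$ the Lemma gives $A^{R+}_{A'}=C^{R+}_{A'}$, and since $C$ has normalization $A'$ this reads exactly $(A^b)^{R+}=A^{R+}$ and $(A^+)^{R+}=A^{R+}$. For the remaining claims I would run one uniform template: whenever one has a tower $A\inj D\inj E$ with $E\inj A'$ in which both $A\inj D$ and $D\inj E$ are $\Rtir$subintegral, transitivity (Lemma \ref{LemTransSub}) makes $A\inj E$ $\Rtir$subintegral, and the universal property of $A^{R+}_{A'}$ (Proposition \ref{PropPU}) forces $E\inj A^{R+}$. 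Taking $D=A^{R+},\,E=(A^{R+})^b$ yields $(A^{R+})^b\inj A^{R+}$, and the opposite inclusion is definitional, giving equality; the choice $E=(A^{R+})^+$ settles statement 2; the choice $D=A^b,\,E=(A^b)^+$ gives statement 3; and $D=A^+,\,E=(A^+)^b$ gives statement 4.

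The main obstacle I anticipate is essentially bookkeeping rather than a hard step: I must check in each case that the intermediate ring genuinely lies in $A'$ and genuinely has $A'$ as its normalization, so that both the final Lemma and the universal property of $A^{R+}_{A'}$ apply verbatim; this rests entirely on the \textbf{(mp)} stability recalled at the start of the section. The only input that is not a direct citation is the implication ``subintegral $\Rightarrow$ $\Rtir$subintegral'', whose whole content is the real-ideal / real-residue-field equivalence, so I would state it explicitly before deploying the template.
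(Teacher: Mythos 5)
Your proof is correct and follows essentially the same route as the paper: the inner equalities $(A^b)^{R+}=A^{R+}$ and $(A^+)^{R+}=A^{R+}$ come from the same lemma on subextensions of the $\F$-seminormalization, and the remaining statements from the two implications (biregular, resp.\ subintegral, $\Rightarrow$ $\Rtir$subintegral) fed into the universal property. The only cosmetic difference is that you go through transitivity (Lemma \ref{LemTransSub}) to apply the universal property of $A^{R+}_{A'}$ directly, whereas the paper applies the inclusions $C^b\subset C^{R+}$ and $C^+\subset C^{R+}$ at the intermediate ring $C$ and then identifies $C^{R+}$ with $A^{R+}$ via part 1 and idempotency.
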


\begin{proof}
By applying the previous proposition with $\F = \Rspec$ to the subextension $A\inj A^b\inj A^{R+}$, we get $(A^b)^{R+} =  A^{R+}$. Moreover $ A^{R+} \inj (A^{R+})^b\inj (A^{R+})^{R+}$, so $(A^{R+})^b = A^{R+}$ and 1 is proved. One can do the exact same thing with the subextension $A\inj A^+\inj A^{R+}$ to get 2. The two inclusions 3 and 4 follows because we have
$$A^b \inj (A^b)^+\inj (A^b)^{R+} = A^{R+}$$
and
$$A^+ \inj (A^+)^b\inj (A^+)^{R+} = A^{R+}$$
\end{proof}

We now prove that the inclusion 3 of the previous proposition is an equality. It will be the first part of the main theorem of this section.

\begin{prop}\label{PropAb+EstAR+}
Let $A$ be a real affine ring. Then
$$(A^b)^+ = A^{R+}$$
\end{prop}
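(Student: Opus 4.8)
We already have the inclusion $(A^b)^+ \subseteq A^{R+}$ from Proposition \ref{PropPremiereComparaisons}(3), so it remains to prove $A^{R+} \subseteq (A^b)^+$. Since $A^b$ and $A$ share the same normalization, $(A^b)' = A'$ and $(A^b)^+ = (A^b)^{\Spec}_{A'}$. By the universal property of the seminormalization (Proposition \ref{PropPU} with $\F = \Spec$ applied to $A^b \inj A^{R+}\inj A'$), one gets $A^{R+}\subseteq (A^b)^+$ as soon as the integral extension $A^b \inj A^{R+}$ is $\Spec$-subintegral. Thus the whole statement reduces to checking subintegrality of $A^b\inj A^{R+}$.

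First, this extension is $\Rtir$subintegral: the extension $A\inj A^b$ is biregular, hence $\Rtir$subintegral by the first lemma of this section, while $A\inj A^{R+}$ is $\Rtir$subintegral by Proposition \ref{PropSnSub}; transitivity (Proposition \ref{LemTransSub}) then gives that $A^b\inj A^{R+}$ is $\Rtir$subintegral. This already provides, over every \emph{real} prime $\q$ of $A^b$, a unique prime of $A^{R+}$ above it with isomorphic residue field. It therefore only remains to treat the non-real primes of $A^b$, and here I would use the defining feature of the biregular normalization: $A^b$ is normal at every non-real prime, i.e. $(A^b)_\q = A'_\q$ whenever $\q\in\Spec(A^b)\setminus\Rspec(A^b)$. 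Granting this, for such a $\q$ the chain $A^b\inj A^{R+}\inj A'$ localizes to $(A^b)_\q = (A^{R+})_\q = A'_\q$, so there is a unique prime of $A^{R+}$ over $\q$, with equal residue field. Combining the two cases, $A^b\inj A^{R+}$ is subintegral and the proof closes.

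The heart of the argument — and the step I expect to be the main obstacle — is the claim that $A^b$ is normal at each non-real prime. Geometrically this is exactly the assertion that the biregular normalization normalizes the purely complex points while leaving the real points untouched. I would establish it from the description $A^b = A'\cap\Ocal(A)$ with $\Ocal(A) = \mathcal{T}(A)^{-1}A$: since localization commutes with the finite intersection $A'\cap\Ocal(A)$ inside $\K(A)$, one is reduced to analysing $\Ocal(A)$ at a non-real prime $\p$, where the presence of an element $t = 1+\sum a_i^2\in\p\cap\mathcal{T}(A)$ forces the normalization to be already absorbed. Alternatively one invokes the maximality of $A^b$ as the largest biregular subextension of $A\inj A'$: enlarging $A^b$ by normalizing only at non-real primes does not alter the local rings at real primes, so it would contradict maximality unless $A^b$ is already normal there. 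This property can also be cited directly from \cite{IntClosure}. Once it is in hand, the two cases above complete the argument.
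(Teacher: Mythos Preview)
Your proof is correct and follows essentially the same route as the paper: both reduce to showing that $A^b \inj A^{R+}$ is subintegral, splitting into the real primes (handled by $\Rtir$subintegrality) and the non-real primes (where $A^b$ is already normal, a fact the paper cites as \cite{IntClosure} Proposition~4.6). The only cosmetic differences are that the paper checks $\Max$-subintegrality and localizes at maximal ideals of $A$ rather than working directly with $\Spec$-subintegrality at primes of $A^b$.
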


\begin{proof}
By Proposition \ref{PropPremiereComparaisons}, we have $(A^b)^+ \inj (A^b)^{R+}$. We want to show that the extension $A^b \inj (A^b)^{R+}$ is subintegral. Since we deal with affine rings, by Theorem \ref{TheoRSubEqBij}, it is enough to show that it is $\Max$-subintegral.

Let $\m^b\in\Max(A^b)$ and let us consider $\m := \m^b\cap A \in \Max(A)$. If $\m\in\Rmax(A)$, because $A\inj A^b$ is biregular, then $\m^b$ is the unique prime ideal of $A^b$ above $\m$ and $\m^b$ is real. But since $A^b \inj (A^b)^{R+}$ is $R$-subintegral, there exists a unique prime ideal of $(A^b)^{R+}$ above $\m^b$.

Now, if $\m\notin\Rmax(A)$, then we have 
$$ (A^b)_{\m} \inj \bigl((A^b)^{R+}\bigr)_{\m} \inj (A^b)'_{\m} = A'_{\m} $$
By \cite{IntClosure} Proposition 4.6, we have $(A^b)_{\m} = A'_{\m}$ so it follows that $(A^b)_{\m} = \bigl((A^b)^{R+}\bigr)_{\m}$. So if we consider $\q_1,\q_2 \in \Max(A^{R+})$ such that $\q_1 \cap A^b = \q_2 \cap A^b = \m^b$, then $\q_1(A^{R+})_{\m} = \q_2(A^{R+})_{\m}$. Suppose there exists $a\in q_1\cap \q_2^c$. Then $\frac{a}{1} \in \q_1(A^{R+})_{\m} = \q_2(A^{R+})_{\m}$. So there exists $b\in \q_2$ and $s\in A\backslash \m$ such that $\frac{a}{1} = \frac{b}{s}$ and so there exists $u\in A\backslash \m$ such that $asu = bu$. We have $a\notin \q_2$ by assumption and also $su\notin \q_2$ or else $su$ would belong to $A\cap \q_2 = \m$. Since $bu \in \q_2$ and $\q_2$ is prime, we obtain a contradiction. This means that $a$ does not exists and so $\q_1 = \q_2$.
\end{proof}

It is shown in $\cite{IntClosure}$ that the biregular normalization of a real variety can be seen as a normalization of its non-real points. We want to understand what the R-seminormalization does locally to the real points and to the non-real points of a variety. This description will also by useful to prove the second part of the main theorem.

\begin{lemma}\label{LemR+loc}
Let $A$ be a real affine ring and $\m\in\Max(A)$. Then 
\begin{enumerate}
    \item If $\m\in\Rmax(A)$, then $(A^{R+})_{\m} = A^+_{\m}$.
    \item If $\m\notin\Rmax(A)$, then $(A^{R+})_{\m} = A_{\m}'$.
\end{enumerate}
\end{lemma}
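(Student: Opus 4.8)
The plan is to split along the two cases and to rely on the identity $(A^b)^+ = A^{R+}$ just proved in Proposition \ref{PropAb+EstAR+}, together with the local behaviour of the biregular normalization established in \cite{IntClosure}. Throughout, $A$, $A^b$ and $A^{R+}$ are intermediate integral extensions of $A$ in the common normalization $A'$, so that $(A^b)' = A'$; I also use the two standard compatibilities that integral closure and seminormalization each commute with localization, the latter being essentially immediate from the local shape of the definition of $A^{\F}_B$.

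The second case is the quick one. By the remark following the first lemma of this section we have the inclusions $A^b \inj A^{R+} \inj A'$, which upon localizing at $\m$ give $(A^b)_\m \subseteq (A^{R+})_\m \subseteq A'_\m$. When $\m \notin \Rmax(A)$, Proposition 4.6 of \cite{IntClosure} --- the very ingredient already invoked in the proof of Proposition \ref{PropAb+EstAR+} --- yields $(A^b)_\m = A'_\m$. The sandwich then forces $(A^{R+})_\m = A'_\m$, which is the second assertion.

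For the first case I would start from $A^{R+} = (A^b)^+$ and localize at $\m$. Since seminormalization commutes with localization, $(A^{R+})_\m = \bigl((A^b)^+\bigr)_\m = \bigl((A^b)_\m\bigr)^+$, the seminormalization now being taken inside $A'_\m = \bigl((A^b)_\m\bigr)'$. The point is to identify $(A^b)_\m$ when $\m$ is real. Because $A \inj A^b$ is biregular, Proposition 4.13 of \cite{IntClosure} provides a unique prime $\m^b$ of $A^b$ over $\m$ together with an isomorphism $A_\m \simeq (A^b)_{\m^b}$; as $A^b$ is integral over $A$, the maximal ideals of $(A\setminus\m)^{-1}A^b$ are exactly those lying over $\m$, of which $\m^b$ is the only one, so $(A^b)_\m$ is local with maximal ideal $\m^b(A^b)_\m$ and hence $(A^b)_\m = (A^b)_{\m^b} \simeq A_\m$. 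Substituting gives $(A^{R+})_\m = (A_\m)^+$; on the other hand $(A^+)_\m = (A_\m)^+$ by the same commutation applied to $A$, and the two sides agree.

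The routine content is the denominator-clearing that underlies ``seminormalization commutes with localization'', which I would either quote or check directly from $A^+_B = \{\, b \in B \mid \forall \p \in \Spec(A),\ b_\p \in A_\p + \Rrm(B_\p)\,\}$ (the inclusion $S^{-1}(A^+_B)\subseteq (S^{-1}A)^+_{S^{-1}B}$ is automatic, and the reverse needs only a common denominator, available since $A$ is Noetherian). The one genuinely delicate step, and the place to be careful, is the passage $(A^b)_\m = A_\m$ at a real maximal ideal: it hinges on biregularity producing a single prime over $\m$, so that the a priori only semilocal ring $(A^b)_\m$ is actually local and collapses onto $A_\m$. Once that is secured, both cases reduce to bookkeeping with the inclusions inside $A'$.
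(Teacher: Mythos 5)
Your proposal is correct and follows essentially the same route as the paper: both hinge on the identity $(A^b)^+ = A^{R+}$ from Proposition \ref{PropAb+EstAR+}, the commutation of seminormalization with localization, and the local description of $A^b$ from \cite{IntClosure} (the paper cites Proposition 4.6 directly for both cases, where you re-derive the real case from Proposition 4.13 and handle the non-real case by a sandwich instead of observing that a normal ring is seminormal). These are only cosmetic variations, so no further comment is needed.
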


\begin{proof}
By Proposition \ref{PropAb+EstAR+}, we have $(A^b)^+ = A^{R+}$. So, for $\m\in\Max(A)$, we get $\bigl((A^b)^+\bigr)_{\m} = \bigl(A^{R+}\bigr)_{\m}$ and since the seminormalization commutes with the localization (see \cite{T} for example), we get $\bigl((A^b)^+\bigr)_{\m} = \bigl((A^b)_{\m}\bigr)^+$. By \cite{IntClosure} Proposition 4.6, we have $(A^b)_{\m} = A_{\m}$ if $\m$ is real and $(A^b)_{\m} = A'_{\m}$ else. It follows that $\bigl(A^{R+}\bigr)_{\m} = A_{\m}^+$ if $\m$ is real and $\bigl(A^{R+}\bigr)_{\m} = (A'_{\m})^+ = A'_{\m}$ else.
\end{proof}

\begin{prop}
Let $A$ be a real affine ring. Then
$$ A^{R+} = \bigcap_{\m\in\Rmax(A)} A^+_{\m} \cap \bigcap_{\m\in\Max(A)\setminus \Rmax(A)} A'_{\m}$$
\end{prop}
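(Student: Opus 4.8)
The plan is to deduce the formula directly from the local computation of Lemma \ref{LemR+loc}, combined with the standard local–global principle expressing a submodule of the total ring of fractions as the intersection of its localizations. Since $A \inj A^{R+} \inj A'$ are finite birational extensions, all three rings share the same total ring of fractions $\K(A)$, and in particular $A^{R+}$ is an $A$-submodule of $\K(A)$ sitting between $A$ and $A'$. All localizations appearing in the statement, namely $A^+_{\m}$ and $A'_{\m}$, are likewise submodules of $\K(A)$, so the intersection on the right-hand side is well defined inside this common ambient ring.

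First I would establish the local–global principle in the form needed here: for any $A$-submodule $M$ of $\K(A)$, one has $M = \bigcap_{\m\in\Max(A)} M_{\m}$, the localizations $M_{\m} := (A\setminus\m)^{-1}M$ being regarded as subsets of $\K(A)$. The inclusion $M \subseteq \bigcap_{\m} M_{\m}$ is immediate. For the reverse inclusion, take $x \in \bigcap_{\m} M_{\m}$ and consider the conductor ideal $I := \{\, a\in A \mid ax \in M \,\}$. For each maximal ideal $\m$, the membership $x \in M_{\m}$ lets us write $x = m/s$ with $m\in M$ and $s\in A\setminus\m$, whence $s\in I$ and $s\notin\m$, so that $I \not\subseteq \m$. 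As this holds for every $\m\in\Max(A)$, the ideal $I$ is contained in no maximal ideal, forcing $I = A$; thus $1\in I$ and $x = 1\cdot x \in M$.

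Applying this principle to $M = A^{R+}$ gives $A^{R+} = \bigcap_{\m\in\Max(A)} (A^{R+})_{\m}$. I would then partition $\Max(A) = \Rmax(A) \sqcup \bigl(\Max(A)\setminus\Rmax(A)\bigr)$ and substitute the two cases of Lemma \ref{LemR+loc}: for $\m\in\Rmax(A)$ one has $(A^{R+})_{\m} = A^+_{\m}$, and for $\m\notin\Rmax(A)$ one has $(A^{R+})_{\m} = A'_{\m}$. Rewriting the single intersection as the intersection of the two subfamilies yields precisely the claimed decomposition. The argument is short once Lemma \ref{LemR+loc} is granted; the only point demanding genuine care — rather than a true obstacle — is the bookkeeping that all the rings $A^{R+}$, $A^+_{\m}$ and $A'_{\m}$ are compared inside the single total ring of fractions $\K(A)$, which is legitimate exactly because seminormalization and normalization are birational over $A$.
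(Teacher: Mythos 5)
Your proof is correct and follows essentially the same route as the paper: both reduce the statement to the identity $A^{R+}=\bigcap_{\m\in\Max(A)}(A^{R+})_{\m}$ taken inside $\K(A)$ and then substitute the two cases of Lemma \ref{LemR+loc}. The only immaterial difference is that you establish that identity directly by the conductor-ideal local--global argument for $A$-submodules of $\K(A)$ (where, strictly, $x=m/s$ in $M_\m$ only yields $tsx=tm\in M$ for some further $t\notin\m$, which still gives $I\not\subset\m$), whereas the paper deduces it by comparing with the intersection over $\Max(A^{R+})$ and using that an affine ring is the intersection of its localizations at its own maximal ideals.
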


\begin{proof}
We have
$$ A^{R+} \subset \bigcap_{\m\in\Max(A)} (A^{R+})_{\m} \subset \bigcap_{\m\in\Max(A^{R+})} (A^{R+})_{\m} $$
Since $A^{R+}$ is an affine ring, the last term is equal to $A^{R+}$. So we get $$A^{R+} = \bigcap_{\m\in\Max(A^{R+})} (A^{R+})_{\m}$$
and we conclude with Lemma \ref{LemR+loc}.
\end{proof}

\begin{rmq}
Let $X$ be a real reduced algebraic variety with real irreducible components. From the previous proposition, we see that $X^{R+}$ is the normalization of the non-real locus of $X$ and the seminormalization of its real locus.
\end{rmq}

Using the previous property, we show the second part of the main theorem of this section.

\begin{prop}\label{PropA+bEstR+}
Let $A$ be a real affine ring. Then 
$$ (A^+)^b = A^{R+} $$
\end{prop}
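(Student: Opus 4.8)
The plan is to establish the reverse of inclusion 4 of Proposition \ref{PropPremiereComparaisons}, namely $A^{R+} \subset (A^+)^b$, since that proposition already gives $(A^+)^b \subset A^{R+}$. The cleanest route is to show that the extension $A^+ \inj A^{R+}$ is biregular; then, since $(A^+)^b$ is by definition the biggest biregular subextension of $A^+ \inj (A^+)'$ and $A^{R+}$ is such a subextension (note that $(A^+)' = A'$, because $A'$ is integrally closed in $\K(A) = \K(A^+)$, so $A^+ \inj A^{R+} \inj A'$), the maximality of $(A^+)^b$ forces $A^{R+} \subset (A^+)^b$, giving equality.

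To verify biregularity I would use condition 2 of the recalled Proposition 4.13 of \cite{IntClosure}, applied to $A^+ \inj A^{R+}$; this is legitimate because $A^+$ satisfies \textbf{(mp)} (as $A \inj A^+ \inj A'$) and $A^{R+} \inj (A^+)' = A'$. So fix $\n \in \Rmax(A^+)$ and set $\m := \n \cap A \in \Max(A)$. Since $A \inj A^+$ is subintegral, the induced map $\kappa(\m) \inj \kappa(\n)$ is an isomorphism; as $\kappa(\n)$ is real, so is $\kappa(\m)$, hence $\m \in \Rmax(A)$.

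The key input is then Lemma \ref{LemR+loc}: for $\m \in \Rmax(A)$ one has $(A^{R+})_{\m} = (A^+)_{\m}$ (localizations at the multiplicative set $A \setminus \m$), so the inclusion $A^+ \inj A^{R+}$ becomes an isomorphism after localizing at $\m$. From this I would deduce, by transitivity of localization, that there is a unique maximal ideal $\n'$ of $A^{R+}$ lying over $\n$ and that $(A^+)_{\n} \simeq (A^{R+})_{\n'}$: every maximal ideal of $A^{R+}$ over $\n$ contracts to $\m$ in $A$ (and is maximal, by integrality), hence corresponds to a maximal ideal of $(A^{R+})_{\m} = (A^+)_{\m}$ lying over the maximal ideal $\n (A^+)_{\m}$, of which there is exactly one, namely $\n (A^+)_{\m}$ itself; localizing once more at this ideal yields $(A^{R+})_{\n'} \simeq \bigl((A^+)_{\m}\bigr)_{\n (A^+)_{\m}} \simeq (A^+)_{\n}$. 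This verifies condition 2 of Proposition 4.13, so $A^+ \inj A^{R+}$ is biregular and the proof concludes as above.

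The main obstacle I anticipate is the bookkeeping between the two kinds of localization: Lemma \ref{LemR+loc} is phrased in terms of the $A$-maximal ideal $\m$, whereas the biregularity criterion lives at the $A^+$-maximal ideal $\n$. The care needed is precisely to check that $\m = \n \cap A$ is real (using subintegrality of $A \inj A^+$) and then to pass correctly, via transitivity of localization, from the isomorphism $(A^+)_{\m} \simeq (A^{R+})_{\m}$ to the uniqueness of $\n'$ over $\n$ and the isomorphism of the local rings at $\n$ and $\n'$.
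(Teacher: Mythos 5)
Your proof is correct, but it takes a genuinely different route from the paper's. You verify directly that $A^+ \inj A^{R+}$ is biregular by checking condition 2 of the recalled Proposition 4.13 of \cite{IntClosure} at each $\n \in \Rmax(A^+)$: you pull $\n$ back to a real maximal ideal $\m$ of $A$ via the equiresiduality of the subintegral extension $A \inj A^+$, invoke Lemma \ref{LemR+loc} to get $(A^{R+})_{\m} = (A^+)_{\m}$, and transfer this by transitivity of localization into uniqueness of the ideal over $\n$ together with an isomorphism of the local rings there; you then conclude from the maximality of $(A^+)^b$ among biregular subextensions combined with inclusion 4 of Proposition \ref{PropPremiereComparaisons}. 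The paper instead computes both sides globally as intersections of local rings indexed by $\Max(A^+)$: it writes $(A^+)^b$ via Proposition 4.8 of \cite{IntClosure} and $A^{R+} = (A^+)^{R+}$ via the intersection formula for the R-seminormalization proved just before, and observes that the two intersections agree term by term (using idempotency to replace $(A^+)^+_{\m}$ by $(A^+)_{\m}$). Both arguments ultimately rest on the same local input, Lemma \ref{LemR+loc}, and hence on Proposition \ref{PropAb+EstAR+}, so neither is circular. Yours trades the intersection formula of \cite{IntClosure} Proposition 4.8 for the local biregularity criterion that the paper explicitly recalls, at the cost of the localization bookkeeping you rightly flag; the paper's version is shorter given that the intersection formula for $A^{R+}$ is already in hand, while yours makes the mechanism (a pointwise local isomorphism at every real maximal ideal of $A^+$) more transparent.
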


\begin{proof}
By \cite{IntClosure} Proposition 4.8, we have 
$$\begin{array}{llll}
    (A^+)^b & \displaystyle{= \bigcap_{\m\in\Rmax(A^+)} (A^+)_{\m} } & \cap & \displaystyle{ \bigcap_{\m\notin\Rmax(A^+)} (A^+)'_{\m} }\\
     & \displaystyle{ = \bigcap_{\m\in\Rmax(A^+)} \text{ }A^+_{\m} }& \cap & \displaystyle{ \bigcap_{\m\notin\Rmax(A^+)} \text{ }A'_{\m} }
\end{array} $$
By Proposition \ref{PropPremiereComparaisons}, we have $(A^+)^{R+} = A^{R+}$, so 
$$\begin{array}{lllll}
    A^{R+} = (A^+)^{R+} & = \displaystyle{\bigcap_{\m\in\Rmax(A^+)} (A^+)^+_{\m}} & \cap & \displaystyle{\bigcap_{\m\notin \Rmax(A^+)} (A^+)'_{\m} } \\
    & = \displaystyle{\bigcap_{\m\in\Rmax(A^+)} \text{ }A^+_{\m} } & \cap & \displaystyle{\bigcap_{\m\notin \Rmax(A^+)} \text{ }A'_{\m} }
\end{array} $$
Hence $A^{R+} = (A^+)^b$.
\end{proof}

From Proposition \ref{PropA+bEstR+} and Proposition \ref{PropAb+EstAR+}, we get the following theorem.
\begin{theorem}\label{TheoB+est+B}
Let $X$ be a real affine variety. Then
$$ \R[X^{R+}] \simeq \R[X^+]^b \simeq \R[X^b]^+$$
\end{theorem}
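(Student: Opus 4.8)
The plan is to assemble the final theorem directly from the two propositions that immediately precede it, so the proof is essentially a bookkeeping step that translates the ring-theoretic equalities into the language of coordinate rings. The statement asserts the chain of isomorphisms $\R[X^{R+}]\simeq\R[X^+]^b\simeq\R[X^b]^+$, and both nontrivial identifications have already been established: Proposition \ref{PropA+bEstR+} gives $(A^+)^b=A^{R+}$ and Proposition \ref{PropAb+EstAR+} gives $(A^b)^+=A^{R+}$, where $A=\R[X]$ is a real affine ring by the hypothesis that $X$ is a real affine variety.

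First I would set $A:=\R[X]$ and recall that $A$ is a real affine ring, so that the machinery of the previous section applies; in particular $A$ satisfies condition \textbf{(mp)}, and its biregular normalization, seminormalization and R-seminormalization are all coordinate rings of real affine varieties. I would then invoke the definitions that tie the ring operations to the varieties: $\R[X^+]=A^+$, $\R[X^b]=A^b$, and $\R[X^{R+}]=A^{R+}$. Applying the biregular-normalization functor to $X^+$ gives $\R[X^+]^b=(A^+)^b$, and applying the seminormalization functor to $X^b$ gives $\R[X^b]^+=(A^b)^+$. The heart of the argument is then simply to chain the two established equalities:
$$\R[X^+]^b=(A^+)^b=A^{R+}=(A^b)^+=\R[X^b]^+=\R[X^{R+}].$$

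The main obstacle, such as it is, is not mathematical depth but ensuring that the operations $(-)^b$ and $(-)^+$ are genuinely well defined on the intermediate rings $A^+$ and $A^b$ — that is, that these rings still satisfy \textbf{(mp)} so that the biregular normalization makes sense, and that they are real affine rings so that the various universal properties (in particular the equivalence of R-subintegrality and R-Max-subintegrality from Theorem \ref{TheoRSubEqBij}) remain available. This is guaranteed by the remark recalled at the start of the section, namely that if $A$ satisfies \textbf{(mp)} and $A\inj B\inj A'$, then $B$ satisfies \textbf{(mp)} as well (using \cite{IntClosure} Lemma 2.8); since both $A^+$ and $A^b$ sit between $A$ and its normalization $A'$, they inherit \textbf{(mp)}, and being finite over $A$ they are real affine rings. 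Once this is noted, the proof is immediate, and I would present it as a one-line citation of the two preceding propositions rather than reprove anything.
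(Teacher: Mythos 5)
Your proposal is correct and matches the paper exactly: the theorem is stated there as an immediate consequence of Proposition \ref{PropA+bEstR+} and Proposition \ref{PropAb+EstAR+}, which is precisely the chaining you describe. Your additional check that $A^+$ and $A^b$ inherit condition \textbf{(mp)} (so the operations are well defined) is a sensible piece of bookkeeping the paper leaves implicit.
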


We illustrate Theorem \ref{TheoB+est+B} with the following example.

\begin{ex}
Let $X = \Spec\Bigl(\R[x,y]/<(y^4+x^6)(y^2-(x-1)^3(x-2)^2(x^2+1)^2(x^2+4)^3)>\Bigr)$. This variety has at least seven singularities : The real point $(0,0)$ is the crossing of two complex cusps, the real point $(1,0)$ is a non-seminormal point, the point $(2,0)$ is the crossing of two real lines, the two complex conjugate points $(\pm i, 0)$ are the crossing of two complex lines and the two complex conjugate points $(\pm 2i, 0)$ are non-seminormal points. We represent the closed points of $X$,$X^b$,$X^+$,$X^{R+}$ in the following figure.
\end{ex}

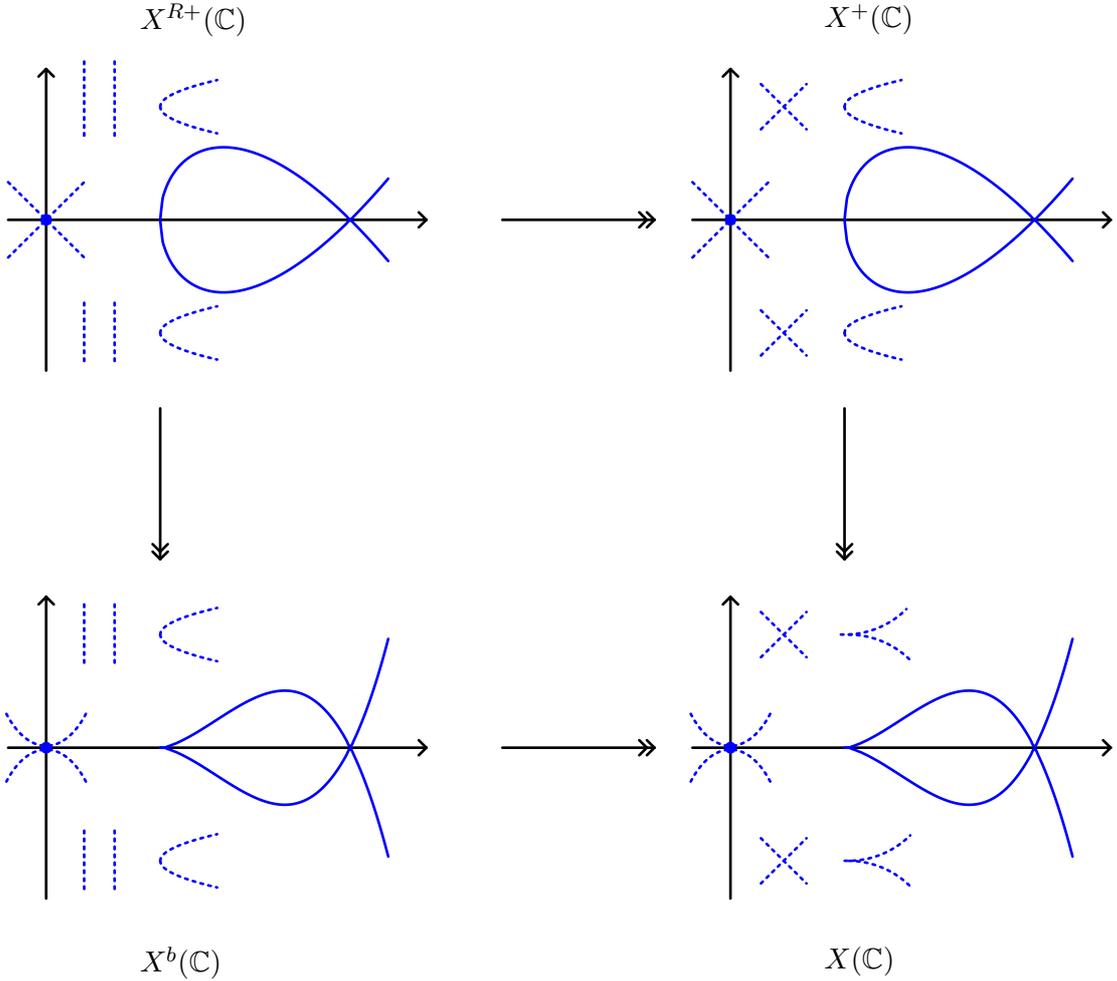
\begin{figure}[h]
    \centering
\begin{tikzpicture}[line cap=round,line join=round,>=triangle 45,x=0.5cm,y=0.5cm]

\clip(-12,-21) rectangle (20,7);

\draw [line width=1pt] (2,0)-- (6,0);
\draw [line width=1pt] (6,0)-- (5.8,0.2);
\draw [line width=1pt] (6,0)-- (5.8,-0.2);
\draw [line width=1pt] (5.8,0)-- (5.6,0.2);
\draw [line width=1pt] (5.8,0)-- (5.6,-0.2);

\draw [line width=1pt] (2,-14)-- (6,-14);
\draw [line width=1pt] (6,-14)-- (5.8,-13.8);
\draw [line width=1pt] (6,-14)-- (5.8,-14.2);
\draw [line width=1pt] (5.8,-14)-- (5.6,-13.8);
\draw [line width=1pt] (5.8,-14)-- (5.6,-14.2);

\draw [line width=1pt] (-7,-5)-- (-7,-9);
\draw [line width=1pt] (-7,-9)-- (-6.8,-8.8);
\draw [line width=1pt] (-7,-9)-- (-7.2,-8.8);
\draw [line width=1pt] (-7,-8.8)-- (-6.8,-8.6);
\draw [line width=1pt] (-7,-8.8)-- (-7.2,-8.6);

\draw [line width=1pt] (11,-5)-- (11,-9);
\draw [line width=1pt] (11,-9)-- (11.2,-8.8);
\draw [line width=1pt] (11,-9)-- (10.8,-8.8);
\draw [line width=1pt] (11,-8.8)-- (11.2,-8.6);
\draw [line width=1pt] (11,-8.8)-- (10.8,-8.6);


\draw [line width=1pt] (8,-10)-- (8,-18);
\draw [line width=1pt] (8,-10)-- (7.8,-10.2);
\draw [line width=1pt] (8,-10)-- (8.2,-10.2);

\draw [line width=1pt] (7,-14)-- (18,-14);
\draw [line width=1pt] (18,-14)-- (17.8,-14.2);
\draw [line width=1pt] (18,-14)-- (17.8,-13.8);

\draw [color=blue, fill=blue] (8,-14) circle (2.0pt);

\draw[color=blue,line width=1pt,smooth,samples=100,domain=11:17] plot(\x,{5*sqrt((\x/5-1.2)^(2)*((\x/5-1.2)-1)^(3)*((\x/5-1.2)-2)^(2))-14});
\draw[color=blue,line width=1pt,smooth,samples=100,domain=11:17] plot(\x,{-5*sqrt((\x/5-1.2)^(2)*((\x/5-1.2)-1)^(3)*((\x/5-1.2)-2)^(2))-14});

\draw [color=blue,line width=1pt,dash pattern=on 1pt off 2pt] (8.8,-11.6)-- (10,-10.4);
\draw [color=blue,line width=1pt,dash pattern=on 1pt off 2pt] (8.8,-10.4)-- (10,-11.6);
\draw [color=blue,line width=1pt,dash pattern=on 1pt off 2pt] (8.8,-17.6)-- (10,-16.4);
\draw [color=blue,line width=1pt,dash pattern=on 1pt off 2pt] (8.8,-16.4)-- (10,-17.6);

\draw [color=blue,shift={(11,-8.7)},line width=1pt,dash pattern=on 1pt off 2pt]  plot[domain=4.67:5.5,variable=\t]({2.3*cos(\t r)},{2.3*sin(\t r)});
\draw [color=blue,shift={(11.1,-13.3)},line width=1pt,dash pattern=on 1pt off 2pt]  plot[domain=0.8:1.6,variable=\t]({2.3*cos(\t r)},{2.3*sin(\t r)});
\draw [color=blue,shift={(11.1,-14.7)},line width=1pt, dash pattern=on 1pt off 2pt]  plot[domain=4.67:5.5,variable=\t]({2.3*cos(\t r)},{2.3*sin(\t r)});
\draw [color=blue,shift={(11.1,-19.3)},line width=1pt, dash pattern=on 1pt off 2pt]  plot[domain=0.8:1.6,variable=\t]({2.3*cos(\t r)},{2.3*sin(\t r)});

\draw [color=blue,shift={(8,-17.05)},line width=1pt,dash pattern=on 1pt off 2pt]  plot[domain=0.8:1.6,variable=\t]({1.5*cos(\t r)},{3*sin(\t r)});
\draw [color=blue,shift={(8,-17.05)},line width=1pt,dash pattern=on 1pt off 2pt]  plot[domain=0.8:1.6,variable=\t]({-1.5*cos(\t r)},{3*sin(\t r)});
\draw [color=blue,shift={(8,-10.95)},line width=1pt,dash pattern=on 1pt off 2pt]  plot[domain=0.8:1.6,variable=\t]({1.5*cos(\t r)},{-3*sin(\t r)});
\draw [color=blue,shift={(8,-10.95)},line width=1pt,dash pattern=on 1pt off 2pt]  plot[domain=0.8:1.6,variable=\t]({-1.5*cos(\t r)},{-3*sin(\t r)});



\draw [line width=1pt] (-10,-10)-- (-10,-18);
\draw [line width=1pt] (-10,-10)-- (-9.8,-10.2);
\draw [line width=1pt] (-10,-10)-- (-10.2,-10.2);

\draw [line width=1pt] (-11,-14)-- (0,-14);
\draw [line width=1pt] (0,-14)-- (-0.2,-14.2);
\draw [line width=1pt] (0,-14)-- (-0.2,-13.8);

\draw [color=blue,fill=blue] (-10,-14) circle (2.0pt);

\draw[color=blue,shift={(-18,0)}, line width=1pt,smooth,samples=100,domain=11:17] plot(\x,{5*sqrt((\x/5-1.2)^(2)*((\x/5-1.2)-1)^(3)*((\x/5-1.2)-2)^(2))-14});
\draw[color=blue,shift={(-18,0)}, line width=1pt,smooth,samples=100,domain=11:17] plot(\x,{-5*sqrt((\x/5-1.2)^(2)*((\x/5-1.2)-1)^(3)*((\x/5-1.2)-2)^(2))-14});

\draw [color=blue,line width=1pt,dash pattern=on 1pt off 2pt] (-9,-10.2) -- (-9,-11.8);
\draw [color=blue,line width=1pt,dash pattern=on 1pt off 2pt] (-8.2,-10.2)-- (-8.2,-11.8);
\draw [color=blue,line width=1pt,dash pattern=on 1pt off 2pt] (-9,-16.2)-- (-9,-17.8);
\draw [color=blue,line width=1pt,dash pattern=on 1pt off 2pt] (-8.2,-16.2)-- (-8.2,-17.8);

\draw[color=blue,shift={(-7,-11)},line width=1pt,dash pattern=on 1pt off 2pt,samples=100,domain=0:1.5] plot(\x,{sqrt((\x/3)});
\draw[color=blue,shift={(-7,-11)},line width=1pt,dash pattern=on 1pt off 2pt,samples=100,domain=0:1.5] plot(\x,{-sqrt((\x/3)});
\draw[color=blue,shift={(-7,-17)},line width=1pt,dash pattern=on 1pt off 2pt,samples=100,domain=0:1.5] plot(\x,{sqrt((\x/3)});
\draw[color=blue,shift={(-7,-17)},line width=1pt,dash pattern=on 1pt off 2pt,samples=100,domain=0:1.5] plot(\x,{-sqrt((\x/3)});

\draw [color=blue,shift={(-10,-17.05)},line width=1pt,dash pattern=on 1pt off 2pt]  plot[domain=0.8:1.6,variable=\t]({1.5*cos(\t r)},{3*sin(\t r)});
\draw [color=blue,shift={(-10,-17.05)},line width=1pt,dash pattern=on 1pt off 2pt]  plot[domain=0.8:1.6,variable=\t]({-1.5*cos(\t r)},{3*sin(\t r)});
\draw [color=blue,shift={(-10,-10.95)},line width=1pt,dash pattern=on 1pt off 2pt]  plot[domain=0.8:1.6,variable=\t]({1.5*cos(\t r)},{-3*sin(\t r)});
\draw [color=blue,shift={(-10,-10.95)},line width=1pt,dash pattern=on 1pt off 2pt]  plot[domain=0.8:1.6,variable=\t]({-1.5*cos(\t r)},{-3*sin(\t r)});



\draw [line width=1pt] (8,4)-- (8,-4);
\draw [line width=1pt] (8,4)-- (7.8,3.8);
\draw [line width=1pt] (8,4)-- (8.2,3.8);

\draw [line width=1pt] (7,0)-- (18,0);
\draw [line width=1pt] (18,0)-- (17.8,-0.2);
\draw [line width=1pt] (18,0)-- (17.8,0.2);

\draw [color=blue,fill=blue] (8,0) circle (2.0pt);

\draw[color=blue,shift={(0,14)},line width=1pt,smooth,samples=100,domain=11:17] plot(\x,{5*sqrt(((\x/5-1.2)-1)*((\x/5-1.2)-2)^(2))-14});
\draw[color=blue,shift={(0,14)},line width=1pt,smooth,samples=100,domain=11:17] plot(\x,{-5*sqrt(((\x/5-1.2)-1)*((\x/5-1.2)-2)^(2))-14});

\draw [color=blue,line width=1pt,dash pattern=on 1pt off 2pt] (8.8,2.4)-- (10,3.6);
\draw [color=blue,line width=1pt,dash pattern=on 1pt off 2pt] (8.8,3.6)-- (10,2.4);
\draw [color=blue,line width=1pt,dash pattern=on 1pt off 2pt] (7,-1)-- (9,1);
\draw [color=blue,line width=1pt,dash pattern=on 1pt off 2pt] (7,1)-- (9,-1);
\draw [color=blue,line width=1pt,dash pattern=on 1pt off 2pt] (8.8,-3.6)-- (10,-2.4);
\draw [color=blue,line width=1pt,dash pattern=on 1pt off 2pt] (8.8,-2.4)-- (10,-3.6);

\draw[color=blue,shift={(11,3)},line width=1pt,dash pattern=on 1pt off 2pt,samples=100,domain=0:1.5] plot(\x,{sqrt((\x/3)});
\draw[color=blue,shift={(11,3)},line width=1pt,dash pattern=on 1pt off 2pt,samples=100,domain=0:1.5] plot(\x,{-sqrt((\x/3)});
\draw[color=blue,shift={(11,-3)},line width=1pt,dash pattern=on 1pt off 2pt,samples=100,domain=0:1.5] plot(\x,{sqrt((\x/3)});
\draw[color=blue,shift={(11,-3)},line width=1pt,dash pattern=on 1pt off 2pt,samples=100,domain=0:1.5] plot(\x,{-sqrt((\x/3)});



\draw [line width=1pt] (-10,4)-- (-10,-4);
\draw [line width=1pt] (-10,4)-- (-10.2,3.8);
\draw [line width=1pt] (-10,4)-- (-9.8,3.8);

\draw [line width=1pt] (-11,0)-- (0,0);
\draw [line width=1pt] (0,0)-- (-0.2,-0.2);
\draw [line width=1pt] (0,0)-- (-0.2,0.2);

\draw [color=blue, fill=blue] (-10,0) circle (2.0pt);

\draw[color=blue,shift={(-18,14)},line width=1pt,smooth,samples=100,domain=11:17] plot(\x,{5*sqrt(((\x/5-1.2)-1)*((\x/5-1.2)-2)^(2))-14});
\draw[color=blue,shift={(-18,14)},line width=1pt,smooth,samples=100,domain=11:17] plot(\x,{-5*sqrt(((\x/5-1.2)-1)*((\x/5-1.2)-2)^(2))-14});

\draw [color=blue,line width=1pt,dash pattern=on 1pt off 2pt] (-11,-1)-- (-9,1);
\draw [color=blue,line width=1pt,dash pattern=on 1pt off 2pt] (-11,1)-- (-9,-1);

\draw [color=blue,line width=1pt,dash pattern=on 1pt off 2pt] (-9,4.2) -- (-9,2.2);
\draw [color=blue,line width=1pt,dash pattern=on 1pt off 2pt] (-8.2,4.2)-- (-8.2,2.2);
\draw [color=blue,line width=1pt,dash pattern=on 1pt off 2pt] (-9,-2.2)-- (-9,-3.8);
\draw [color=blue,line width=1pt,dash pattern=on 1pt off 2pt] (-8.2,-2.2)-- (-8.2,-3.8);

\draw[color=blue,shift={(-7,3)},line width=1pt,dash pattern=on 1pt off 2pt,samples=100,domain=0:1.5] plot(\x,{sqrt((\x/3)});
\draw[color=blue,shift={(-7,3)},line width=1pt,dash pattern=on 1pt off 2pt,samples=100,domain=0:1.5] plot(\x,{-sqrt((\x/3)});
\draw[color=blue,shift={(-7,-3)},line width=1pt,dash pattern=on 1pt off 2pt,samples=100,domain=0:1.5] plot(\x,{sqrt((\x/3)});
\draw[color=blue,shift={(-7,-3)},line width=1pt,dash pattern=on 1pt off 2pt,samples=100,domain=0:1.5] plot(\x,{-sqrt((\x/3)});


\draw (-7.8,-19) node[anchor=north west] {$X^b(\C)$};
\draw (10.2,6) node[anchor=north west] {$X^+(\C)$};
\draw (10.2,-19) node[anchor=north west] {$X(\C)$};
\draw (-7.8,6) node[anchor=north west] {$X^{R+}(\C)$};
\end{tikzpicture}

    \caption{Illustration of Theorem \ref{TheoB+est+B} }
    \label{fig:ExempleFinal}
\end{figure}

\newpage
\bibliographystyle{abbrv}
\bibliography{Biblio.bib}

François Bernard, Université d’Angers, LAREMA, UMR 6093 CNRS, Faculté des Sciences Bâtiment I, 2 Boulevard Lavoisier, F-49045 Angers cedex 01, France\\
\textit{E-mail address: \textbf{francois.bernard@univ-angers.fr}}

\end{document}